\renewcommand{\epsilon}{\varepsilon}
\newcommand{\metaand}{\ \with \ }
\newcommand{\bigmetaor}{\bigparr}
\newcommand{\bigmetaand}{\bigwith}
\newcommand{\metaor}{\ \parr\ }
\newcommand{\metanot}{{\sim}}
\newcommand\val[1]{{\lbrack\!\lbrack} {#1}{\rbrack\!\rbrack}}
\newcommand{\atprop}{\mathsf{AtProp}}
\newcommand{\nomset}{\mathrm{NL}}
\newcommand{\cnomset}{\mathrm{CNL}}
\newcommand{\nomh}{\mathbf{h}}
\newcommand{\nomi}{\mathbf{i}}
\newcommand{\nomj}{\mathbf{j}}
\newcommand{\nomk}{\mathbf{k}}
\newcommand{\cnoml}{\mathbf{l}}
\newcommand{\cnomm}{\mathbf{m}}
\newcommand{\cnomn}{\mathbf{n}}
\newcommand{\cnomo}{\mathbf{o}}
\newcommand{\pureu}{\mathbf{u}}
\newcommand{\purev}{\mathbf{v}}
\newcommand{\purew}{\mathbf{w}}
\newcommand{\jty}{J^{\infty}}
\newcommand{\mty}{M^{\infty}}
\newcommand{\pdla}{>\!\!\!-\:}
\newcommand{\pdra}{-\!\!\!<\:}
\newcommand{\blhd}{\blacktriangleleft}
\newcommand{\brhd}{\blacktriangleright}
\newcommand{\dneg}{{\rho}}
\newcommand{\langbase}{\mathcal{L}_{\mathrm{DLE}}}
\newcommand{\langresidual}{\langbase^*}
\newcommand{\langalba}{\langbase^+} 
\newcommand{\langineq}{\langbase^{\leq}}
\newcommand{\langmeta}{\langbase^{\mathrm{meta}}}
\newcommand{\bba}{\mathbb{A}}
\newcommand{\bbA}{\mathbb{A}}
\newcommand{\oz}{\overline{z}}
\theoremstyle{plain}
\newtheorem{thm}{Theorem}
\newtheorem{theorem}{Theorem}
\newtheorem{cor}[thm]{Corollary}
\newtheorem{prop}[thm]{Proposition}
\newtheorem{lemma}[thm]{Lemma}
\theoremstyle{definition}
\newtheorem{definition}[thm]{Definition}
\newtheorem{notation}[thm]{Notation}
\newtheorem{example}[thm]{Example}
\newtheorem{remark}[thm]{Remark}
\title{Unified inverse correspondence for DLE-logics}
\author{Willem Conradie\textsuperscript{1}, Andrea De Domenico\textsuperscript{2}, Giuseppe Greco\textsuperscript{2}, \\ Alessandra Palmigiano\textsuperscript{2}, Mattia Panettiere\textsuperscript{2} and Apostolos Tzimoulis\textsuperscript{2}}
\date{\textsuperscript{1}University of the Witwatersrand, Johannesburg, \\
\textsuperscript{2}Vrije Universiteit, Amsterdam}
\begin{document}
\maketitle

\begin{abstract}
By exploiting the algebraic and order theoretic mechanisms behind Sahlqvist correspondence, the theory of unified correspondence provides powerful tools for correspondence and canonicity across different semantics and signatures, covering all the logics whose algebraic semantics are given by normal (distributive) lattice expansions (referred to as (D)LEs). 
In particular, the algorithm ALBA, parametric in each (D)LE, effectively computes the first order correspondents of (D)LE-inductive formulas.
We present an algorithm that makes use of ALBA's rules and algebraic language to invert its steps in the DLE setting; therefore effectively computing an inductive formula starting from its first order correspondent.

\noindent \emph{Keywords:} Inverse correspondence, Unified correspondence, Distributive lattice expansions, ALBA.
\end{abstract}

\tableofcontents

\section{Introduction}
Driven by the ``insight that almost all completeness proofs can be reinterpreted
as definability results [...] and that also correspondence
theory is a kind of definability theory'', Kracht \cite{krachtphdthesis} developed the theory of internal description, sometimes referred to as  \emph{inverse correspondence} \cite{blackburn2002modal}. This theory can be regarded as  converse  to Sahlqvist correspondence \cite{sahlqvist1975completeness}; indeed, it syntactically identifies a class of first order formulas, each of which is the first order correspondent of some modal formula, and provides an effective procedure for computing such modal formula. 

Goranko and Vakarelov extended Sahlqvist theory to the class of \emph{polyadic Sahlqvist formulas} \cite{GoVa2000}, also referred to  as \emph{inductive formulae} \cite{GoVa2006}. In \cite{kikotinv}, Kikot  extends Kracht's result to inductive formulae, by syntactically characterizing a class of formulas in the first order language of Kripke frames for classical normal modal logic which correspond to %introducing an algorithm for inverse correspondence accepting the first order correspondents of 
inductive formulas in classical modal logic.

During the last decade, a  line of research was developed which  focuses on the order-theoretic underpinning of Sahlqvist  theory, thus allowing for the generalisations of this theory from classical modal logic to wide classes of nonclassical logics. This shift from a model-theoretic to an algebraic perspective made it  possible to uniformly define the class of Sahlqvist and inductive formulas/inequalities for a broad spectrum of logical languages, based on the order-theoretic properties of the algebraic interpretations of the logical connectives in each language, and to extend the algorithm SQEMA, for computing the first order correspondents of  inductive formulas of  classical modal logic \cite{CoGoVa2006}, to the algorithm ALBA \cite{conradie2012algorithmic,copa2016nondistr}, performing the same task as SQEMA for this spectrum of nonclassical languages which includes the {\em LE-logics}, i.e.~those logics the algebraic semantics of which is given by varieties of normal/regular lattice expansions (LEs), and their expansions with fixed points \cite{CoGhPa14,CCPZ} \cite{CoGhPa14}. %This has been made possible thanks to a precise identification of the order theoretical mechanisms underlying correspondence theory. Indeed, ALBA's algebraic language makes it effectively agnostic with respect to any specific semantic and modal signature, thus it works uniformly for any normal lattice expansion logic .
Not only this very high level of generality allows to extend the benefits of correspondence and canonicity results to many well known logical systems such as bi-intuitionistic (modal) logic, the Lambek-Grishin calculus \cite{lambekgrishin}, and the multiplicative-additive fragment of linear logic \cite{bookreslattice}, but this new point of view paves the way to several  developments and connections among the meta-theories of several logical frameworks, examples of which are a general perspective on G\"odel-McKinsey-Tarski translations and correspondence/canonicity transfer results \cite{sahlqvistviatranslation,slanted}, systematic connections among   different relational semantics of a given logic \cite{dairapaper}, and systematic connections between correspondence-theoretic results and the proof-theoretic behaviour of  logical frameworks \cite{greco2018unified,greco2016linear,chen2021syntactic,greco2016lattice,greco2018semi}.
\iffalse
During the last decade, starting from the algorithm SQEMA for correspondence in classical modal logic targeting inductive formulas \cite{CoGoVa2006}, the algorithm ALBA has been developed expanding Sahlqvist theory in several fronts \cite{CoGhPa14}. Indeed, ALBA's algebraic language makes it effectively agnostic with respect to any specific semantic and modal signature, thus it works uniformly for any normal lattice expansion logic \cite{CoPa2012distr,copa2016nondistr}. This very high level of generality allows to import correspondence and canonicity results to many well known logical systems such as the axiomatic extensions of basic orthomodular logic \cite{orthologic}, positive modal logic, the logic of the non-distributive De Morgan algebras \cite{Anie2009FREENM}, intuitionistic and bi-intuitionistic (modal) logic, the Lambek-Grishin calculus \cite{lambekgrishin}, tense modal logic, and the multiplicative-additive fragment of linear logic \cite{bookreslattice}.
\fi

While many generalizations of Sahlqvist correspondence theory have been developed in recent times, no generalizations of Kracht's theory of inverse correspondence  have been investigated yet since Kikot's result. The results in the present paper start to fill this gap, by generalizing  Kikot's result from classical normal modal logic to all normal {\em DLE-logics}, i.e.~those logics the algebraic semantics of which is given by varieties of normal distributive lattice expansions (DLEs). In particular, we introduce an {\em inverse correspondence} algorithm targeting inductive inequalities in any DLE-signature. %\emph{unified inverse correspondence} for 

Key to this extension is the possibility to reformulate the main engine of Kracht's result in the algebraic environment of unified correspondence \cite{CoGhPa14} so as to exploit the language and algorithmic tools developed there, which work across signatures and relational semantics. Indeed, to achieve this objective, we approach the problem from an exclusively order theoretic perspective by making use of a slight extension of ALBA's language and rules, extending the work started in \cite{inverse_correspondence_tense} to the DLE setting.

The proof-strategy adopted to achieve this result is different from Kikot's. Indeed, rather than  relaxing the definition of Kracht's formula, which is given only in terms of forward-looking restricted quantifiers, we start by generalizing to the setting of DLE-logics the fact, well-known from  classical modal logic, that inductive formulas are semantically equivalent to  (a certain proper subclass) of scattered \emph{very simple Sahlqvist} formulas in the language of {\em tense logic}. Accordingly, for every DLE-language $\mathcal{L}$, we syntactically characterize the class $K$ of very simple Sahlqvist $\mathcal{L}^\ast$-inequalities (where  $\mathcal{L}^\ast$ is the language expansion of $\mathcal{L}$ obtained by closing the signature of  $\mathcal{L}$ under the residuals of each connective in $\mathcal{L}$) which are semantically equivalent to inductive $\mathcal{L}$-inequalities. Then, we syntactically characterize the class of formulas in the ALBA-language, referred to as {\em Kracht's formulas} (which can be readily translated into first-order formulas of a given frame correspondence language) which target the subclass $K$,
%Driven by the insight that in the classical setting the first order correspondents of inductive formulas can be written in a form very close to Kracht's original one 
by allowing for the use of \emph{backward-looking} restricted quantifiers. %we allow for the residual operators (classically semantically interpreted by the backward-looking relations with respect to their adjoints) to appear in the restricted quantifiers. By making use of the well known fact that inductive formulae are (semantically) equivalent to (a certain proper subclass) of scattered \emph{very simple Sahlqvist} formulae in the language comprising the original connectives and their residuals, 
Finally, we  show that each Kracht's formula in the ALBA-language can be effectively and equivalently transformed into  the ALBA-output of an $\mathcal{L}^\ast$-inequality in $K$.  %characterize exactly the very simple Sahlqvist formulae which are equivalent to some inductive formula.

\paragraph{Structure of the paper. } In Section \ref{sec:preliminaries}, we present a brief introduction to DLE-logics, inductive formulae, ALBA, and we prove some useful technical lemmas essential to the rest of the paper in subsection \ref{ssec:fapproxrestrackermann}. In Section \ref{sec:cryptoinductive}, we show a class of very simple $\mathcal{L}^*$-Sahlqvist formulae equivalent to the class of $\mathcal{L}$-inductive formulae, given some DLE-language $\mathcal{L}$. Section \ref{sec:inversealba} contains the core result of the paper. Firstly, a class of first order correspondents is presented, and then it is proved that it contains all the correspondents of inductive formulas. Finally the algorithm for inverse correspondence is shown and its correctness is proved. Section \ref{sec:conclusions} suggests avenues for future research.

\section{Preliminaries}
\label{sec:preliminaries}

In this section we present the languages that we consider and their minimal logics, recall the rules of ALBA, present some variations of the Ackermann lemma, and discuss the problem that arise in generalising Kracht's work to a more general setting.

\subsection{Language and axioms}
Our base language is an unspecified but fixed language $\mathcal{L}_\mathrm{DLE}$, to be interpreted over distributive lattice expansions of compatible similarity type.
We will make heavy use of the following auxiliary definition: an {\em order-type} over $n\in \mathbb{N}$\footnote{Throughout the paper, order-types will be typically associated with arrays of variables $\overline p: = (p_1,\ldots, p_n)$. When the order of the variables in $\overline p$ is not specified, we will sometimes abuse notation and write $\varepsilon(p) = 1$ or $\varepsilon(p) = \partial$.} is an $n$-tuple $\epsilon\in \{1, \partial\}^n$. For every order type $\epsilon$, we denote its {\em opposite} order type by $\epsilon^\partial$, that is, $\epsilon^\partial_i = 1$ iff $\epsilon_i=\partial$ for every $1 \leq i \leq n$. For any lattice $\bba$, we let $\bba^1: = \bba$ and $\bba^\partial$ be the dual lattice, that is, the lattice associated with the converse partial order of $\bba$. For any order type $\varepsilon$, we let $\bba^\varepsilon: = \Pi_{i = 1}^n \bba^{\varepsilon_i}$.
	
	The language $\mathcal{L}_\mathrm{DLE}(\mathcal{F}, \mathcal{G})$ (from now on abbreviated as $\mathcal{L}_\mathrm{DLE}$) takes as parameters: 1) a denumerable set $\mathsf{PROP}$ of proposition letters, elements of which are denoted $p,q,r$, possibly with indexes; 2) disjoint sets of connectives $\mathcal{F}$ and  $\mathcal{G}$. %such that $\mathcal{F}: = \mathcal{F}_r\uplus \mathcal{F}_n$ and $\mathcal{G}: = \mathcal{G}_r\uplus \mathcal{G}_n$.
Each $f\in \mathcal{F}$ and $g\in \mathcal{G}$ has arity $n_f\in \mathbb{N}$ (resp.\ $n_g\in \mathbb{N}$) and is associated with some order-type $\varepsilon_f$ over $n_f$ (resp.\ $\varepsilon_g$ over $n_g$).\footnote{Unary $f$ (resp.\ $g$) will be sometimes denoted as $\Diamond$ (resp.\ $\Box$) if the order-type is 1, and $\lhd$ (resp.\ $\rhd$) if the order-type is $\partial$.} %Connectives belonging to $\mathcal{F}_r$ or $\mathcal{G}_r$ are always unary.
The terms (formulas) of $\mathcal{L}_\mathrm{DLE}$ are defined recursively as follows:
	\[
	\phi ::= p \mid \bot \mid \top \mid \phi \wedge \phi \mid \phi \vee \phi \mid f(\overline{\phi}) \mid g(\overline{\phi})
	\]
	where $p \in \mathsf{PROP}$, $f \in \mathcal{F}$, $g \in \mathcal{G}$. Terms in $\mathcal{L}_\mathrm{DLE}$ will be denoted either by $s,t$, or by lowercase Greek letters such as $\varphi, \psi, \gamma$ etc. %The set of all $\mathcal{L}_\mathrm{LE}$-inequalities $\phi \leq \psi$ where $\phi, \psi$ are $\mathcal{L}_\mathrm{LE}$-terms will be denoted  $\mathrm{LE}$. The set of all $\mathcal{L}_\mathrm{LE}$-quasi-inequalities, i.e., expressions of the form $(\phi_1 \leq \psi_1 \amp \cdots \amp \phi_n \leq \psi_n) \Rightarrow \phi \leq \psi$ where $\phi_1, \ldots, \phi_n, \psi_1, \ldots \psi_n, \phi, \psi \in \mathcal{L}_\mathrm{LE}$, will be denoted by $\mathcal{L}_\mathrm{LE}^{\mathit{quasi}}$.

\begin{definition}
		\label{def:DLE:logic:general}
		For any language $\mathcal{L}_\mathrm{DLE} = \mathcal{L}_\mathrm{DLE}(\mathcal{F}, \mathcal{G})$, the {\em basic}, or {\em minimal} $\mathcal{L}_\mathrm{DLE}$-{\em logic} is a set of sequents $\phi\vdash\psi$, with $\phi,\psi\in\mathcal{L}_\mathrm{LE}$, which contains the following axioms:
		\begin{itemize}
			\item Sequents for  lattice operations:
			\begin{align*}
				&p\vdash p, && \bot\vdash p, && p\vdash \top, & & p\wedge (q\vee r)\vdash (p\wedge q)\vee (p\wedge r), &\\
				&p\vdash p\vee q, && q\vdash p\vee q, && p\wedge q\vdash p, && p\wedge q\vdash q, &
			\end{align*}
			\item Sequents for $f\in \mathcal{F}$ and $g\in \mathcal{G}$:
			\begin{align*}
				& f(p_1,\ldots, \bot,\ldots,p_{n_f}) \vdash \bot,~\mathrm{for}~ \varepsilon_f(i) = 1,\\
				& f(p_1,\ldots, \top,\ldots,p_{n_f}) \vdash \bot,~\mathrm{for}~ \varepsilon_f(i) = \partial,\\
				&\top\vdash g(p_1,\ldots, \top,\ldots,p_{n_g}),~\mathrm{for}~ \varepsilon_g(i) = 1,\\
				&\top\vdash g(p_1,\ldots, \bot,\ldots,p_{n_g}),~\mathrm{for}~ \varepsilon_g(i) = \partial,\\
				&f(p_1,\ldots, p\vee q,\ldots,p_{n_f}) \vdash f(p_1,\ldots, p,\ldots,p_{n_f})\vee f(p_1,\ldots, q,\ldots,p_{n_f}),~\mathrm{for}~ \varepsilon_f(i) = 1,\\
				&f(p_1,\ldots, p\wedge q,\ldots,p_{n_f}) \vdash f(p_1,\ldots, p,\ldots,p_{n_f})\vee f(p_1,\ldots, q,\ldots,p_{n_f}),~\mathrm{for}~ \varepsilon_f(i) = \partial,\\
				& g(p_1,\ldots, p,\ldots,p_{n_g})\wedge g(p_1,\ldots, q,\ldots,p_{n_g})\vdash g(p_1,\ldots, p\wedge q,\ldots,p_{n_g}),~\mathrm{for}~ \varepsilon_g(i) = 1,\\
				& g(p_1,\ldots, p,\ldots,p_{n_g})\wedge g(p_1,\ldots, q,\ldots,p_{n_g})\vdash g(p_1,\ldots, p\vee q,\ldots,p_{n_g}),~\mathrm{for}~ \varepsilon_g(i) = \partial,
			\end{align*}
		\end{itemize}
		and is closed under the following inference rules:
		\begin{displaymath}
			\frac{\phi\vdash \chi\quad \chi\vdash \psi}{\phi\vdash \psi}
			\quad
			\frac{\phi\vdash \psi}{\phi(\chi/p)\vdash\psi(\chi/p)}
			\quad
			\frac{\chi\vdash\phi\quad \chi\vdash\psi}{\chi\vdash \phi\wedge\psi}
			\quad
			\frac{\phi\vdash\chi\quad \psi\vdash\chi}{\phi\vee\psi\vdash\chi}
		\end{displaymath}
		\begin{displaymath}
			 \frac{\phi\vdash\psi}{f(p_1,\ldots,\phi,\ldots,p_n)\vdash f(p_1,\ldots,\psi,\ldots,p_n)}{~(\varepsilon_f(i) = 1)}
		\end{displaymath}
		\begin{displaymath}
			 \frac{\phi\vdash\psi}{f(p_1,\ldots,\psi,\ldots,p_n)\vdash f(p_1,\ldots,\phi,\ldots,p_n)}{~(\varepsilon_f(i) = \partial)}
		\end{displaymath}
		\begin{displaymath}
			 \frac{\phi\vdash\psi}{g(p_1,\ldots,\phi,\ldots,p_n)\vdash g(p_1,\ldots,\psi,\ldots,p_n)}{~(\varepsilon_g(i) = 1)}
		\end{displaymath}
		\begin{displaymath}
			 \frac{\phi\vdash\psi}{g(p_1,\ldots,\psi,\ldots,p_n)\vdash g(p_1,\ldots,\phi,\ldots,p_n)}{~(\varepsilon_g(i) = \partial)}.
		\end{displaymath}
		The minimal DLE-logic is denoted by $\mathbf{L}_\mathrm{DLE}$. For any DLE-language $\mathcal{L}_{\mathrm{DLE}}$, by an {\em $\mathrm{DLE}$-logic} we understand any axiomatic extension of the basic $\mathcal{L}_{\mathrm{DLE}}$-logic in $\mathcal{L}_{\mathrm{DLE}}$.
	\end{definition}

	\subsection{The expanded language \texorpdfstring{ $\mathcal{L}_\mathrm{DLE}^*$}{LDLE*}}
	\label{ssec:expanded language}
	Any given language $\mathcal{L}_\mathrm{DLE} = \mathcal{L}_\mathrm{DLE}(\mathcal{F}, \mathcal{G})$ can be associated with the language $\mathcal{L}_\mathrm{DLE}^* = \mathcal{L}_\mathrm{DLE}(\mathcal{F}^*, \mathcal{G}^*)$, where $\mathcal{F}^*\supseteq \mathcal{F}$ and $\mathcal{G}^*\supseteq \mathcal{G}$ are obtained by expanding $\mathcal{L}_\mathrm{DLE}$ with the following connectives:
	\begin{enumerate}
		\item the binary connectives $\leftarrow$ and $\rightarrow$, the intended interpretations of which are the right residuals of $\wedge$ in the first and second coordinate respectively, and
		$\pdla$ and $ \pdra$, the intended interpretations of which are the left residuals of $\vee$ in the first and second coordinate, respectively;
		\item the $n_f$-ary connective $f^\sharp_i$ for $0\leq i\leq n_f$, the intended interpretation of which is the right residual of $f\in\mathcal{F}$ in its $i$th coordinate if $\varepsilon_f(i) = 1$ (resp.\ its Galois-adjoint if $\varepsilon_f(i) = \partial$);
		\item the $n_g$-ary connective $g^\flat_i$ for $0\leq i\leq n_g$, the intended interpretation of which is the left residual of $g\in\mathcal{G}$ in its $i$th coordinate if $\varepsilon_g(i) = 1$ (resp.\ its Galois-adjoint if $\varepsilon_g(i) = \partial$).
		% $ g^\flat_j$ for each and $g\in \mathcal{G}$, where and $0\leq j\leq n_g$ ($f^\sharp_i$ is the right residual of $f$ in the $i$-th coordinate, and $g^\flat_j$ is the left residual of $g$ in the $j$-th coordinate).
		\footnote{The adjoints of the unary connectives $\Box$, $\Diamond$, $\lhd$ and $\rhd$ are denoted $\Diamondblack$, $\blacksquare$, $\blhd$ and $\brhd$, respectively.}
	\end{enumerate}
	We stipulate that $\pdra, \pdla\in \mathcal{F}^*$, that $\rightarrow, \leftarrow\in \mathcal{G}^*$, and moreover, that
	$f^\sharp_i\in\mathcal{G}^*$ if $\varepsilon_f(i) = 1$, and $f^\sharp_i\in\mathcal{F}^*$ if $\varepsilon_f(i) = \partial$. Dually, $g^\flat_i\in\mathcal{F}^*$ if $\varepsilon_g(i) = 1$, and $g^\flat_i\in\mathcal{G}^*$ if $\varepsilon_g(i) = \partial$. The order-type assigned to the additional connectives is predicated on the order-type of their intended interpretations. That is, for any $f\in \mathcal{F}$ and $g\in\mathcal{G}$,
	%each $g^\flat_j\in\mathcal{F}$, for each coordinate $i$ in $f$ or $g$,
	\begin{enumerate}
		\item if $\epsilon_f(i) = 1$, then $\epsilon_{f_i^\sharp}(i) = 1$ and $\epsilon_{f_i^\sharp}(j) = (\epsilon_f(j))^\partial$ for any $j\neq i$.
		\item if $\epsilon_f(i) = \partial$, then $\epsilon_{f_i^\sharp}(i) = \partial$ and $\epsilon_{f_i^\sharp}(j) = \epsilon_f(j)$ for any $j\neq i$.
		\item if $\epsilon_g(i) = 1$, then $\epsilon_{g_i^\flat}(i) = 1$ and $\epsilon_{g_i^\flat}(j) = (\epsilon_g(j))^\partial$ for any $j\neq i$.
		\item if $\epsilon_g(i) = \partial$, then $\epsilon_{g_i^\flat}(i) = \partial$ and $\epsilon_{g_i^\flat}(j) = \epsilon_g(j)$ for any $j\neq i$.
	\end{enumerate}
	
	For instance, if $f$ and $g$ are binary connectives such that $\varepsilon_f = (1, \partial)$ and $\varepsilon_g = (\partial, 1)$, then $\varepsilon_{f^\sharp_1} = (1, 1)$, $\varepsilon_{f^\sharp_2} = (1, \partial)$, $\varepsilon_{g^\flat_1} = (\partial, 1)$ and $\varepsilon_{g^\flat_2} = (1, 1)$.\footnote{Warning: notice that this notation heavily depends from the connective which is taken as primitive, and needs to be carefully adapted to well known cases. For instance, consider the  `fusion' connective $\circ$ (which, when denoted  as $f$, is such that $\varepsilon_f = (1, 1)$). Its residuals
$f_1^\sharp$ and $f_2^\sharp$ are commonly denoted $/$ and
$\backslash$ respectively. However, if $\backslash$ is taken as the primitive connective $g$, then $g_2^\flat$ is $\circ = f$, and
$g_1^\flat(x_1, x_2): = x_2/x_1 = f_1^\sharp (x_2, x_1)$. This example shows
that, when identifying $g_1^\flat$ and $f_1^\sharp$, the conventional order of the coordinates is not preserved, and depends of which connective
is taken as primitive.}
	
	%\begin{remark}
	%Note that ($\top, \bot$), ($\pand, \por$), ($\pdra, \pra$) and ($\pdla, \leftarrow$) are \emph{dual pairs}. Operators in a dual pair have the same arity and the same order type. The pairs ($\pand, \pra$), ($\wedge, \leftarrow$), ($\pdra, \por$) and ($\pdla,\vee$) are \emph{residual pairs} as follows: $\pand \dashv \ \pra$, $\pand \dashv \ \leftarrow$, $\pdra \dashv \por$, and $\pdla \dashv \por$.
	%\end{remark}
	
\subsection{Algebraic semantics for basic DLE-logics}
	In the present section the standard algebraic semantics for distributive lattice expansion logics are presented.
	
	\begin{definition}
	\label{def:canonical:extension}
	The \emph{canonical extension} of a bounded  lattice $L$ is a complete  lattice $L^\delta$ containing $L$ as a sublattice, such that:
				\begin{enumerate}
					\item \emph{(denseness)} every element of $L^\delta$ is both the join of meets and the meet of joins of elements from $L$;
					\item \emph{(compactness)} for all $S,T \subseteq L$, if $\bigwedge S \leq \bigvee T$ in $L^\delta$, then $\bigwedge F \leq \bigvee G$ for some finite sets $F \subseteq S$ and $G\subseteq T$.
				\end{enumerate}
	An element $k \in L^\delta$ (resp.~$o\in L^\delta$) is \emph{closed} (resp.\ \emph{open}) if is the meet (resp.\ join) of some subset of $L$. We let $K(L^\delta)$ (resp.~$O(L^\delta)$) denote the set of the closed (resp.~open) elements of $L^\delta$. For every unary, order-preserving map $h : L \to M$ between bounded lattices, the $\sigma$-{\em extension} of $h$ is defined firstly by declaring, for every $k\in K(L^\delta)$,
$$h^\sigma(k):= \bigwedge\{ h(a)\mid a\in L\mbox{ and } k\leq a\},$$ and then, for every $u\in L^\delta$,
$$h^\sigma(u):= \bigvee\{ h^\sigma(k)\mid k\in K(L^\delta)\mbox{ and } k\leq u\}.$$
The $\pi$-{\em extension} of $f$ is defined firstly by declaring, for every $o\in O(L^\delta)$,
$$h^\pi(o):= \bigvee\{ h(a)\mid a\in L\mbox{ and } a\leq o\},$$ and then, for every $u\in L^\delta$,
$$h^\pi(u):= \bigwedge\{ h^\pi(o)\mid o\in O(L^\delta)\mbox{ and } u\leq o\}.$$
The definitions above apply also to operations of any finite arity and order-type. Indeed,
taking  order-duals interchanges closed and open elements:
$K({(L^\delta)}^\partial) = O(L^\delta)$ and $O({(L^\delta)}^\partial) = K(L^\delta)$;  similarly, $K({(L^n)}^\delta) =K(L^\delta)^n$, and $O({(L^n)}^\delta) =O(L^\delta)^n$. Hence,  $K({(L^\delta)}^\epsilon) =\prod_i K(L^\delta)^{\epsilon(i)}$ and $O({(L^\delta)}^\epsilon) =\prod_i O(L^\delta)^{\epsilon(i)}$ for every lattice $L$ and every order-type $\epsilon$ over any $n\in \mathbb{N}$, where
\begin{center}
\begin{tabular}{cc}
$K(L^\delta)^{\epsilon(i)}: =\begin{cases}
K(L^\delta) & \mbox{if } \epsilon(i) = 1\\
O(L^\delta) & \mbox{if } \epsilon(i) = \partial\\
\end{cases}
$ &
$O(L^\delta)^{\epsilon(i)}: =\begin{cases}
O(L^\delta) & \mbox{if } \epsilon(i) = 1\\
K(L^\delta) & \mbox{if } \epsilon(i) = \partial.\\
\end{cases}
$\\
\end{tabular}
\end{center}
From this it follows that
 ${(L^\partial)}^\delta$ can be  identified with ${(L^\delta)}^\partial$,  ${(L^n)}^\delta$ with ${(L^\delta)}^n$, and
${(L^\epsilon)}^\delta$ with ${(L^\delta)}^\epsilon$ for any order type $\epsilon$ over $n$, where $L^\epsilon: = \prod_{i = 1}^n L^{\epsilon(i)}$.
These identifications make it possible to obtain the definition of $\sigma$-and $\pi$-extensions of $\epsilon$-monotone operations of any arity $n$ and order-type $\epsilon$ over $n$ by instantiating the corresponding definitions given above for monotone and unary functions.

	\end{definition}
	
	\begin{definition}
		\label{def:DLE}
		For any tuple $(\mathcal{F}, \mathcal{G})$ of disjoint sets of function symbols as above, a {\em distributive lattice expansion} (abbreviated as DLE) is a tuple $\bba = (L, \mathcal{F}^\bbA, \mathcal{G}^\bbA)$ such that $L$ is a bounded distributive lattice, $\mathcal{F}^\bbA = \{f^\bbA\mid f\in \mathcal{F}\}$ and $\mathcal{G}^\bbA = \{g^\bbA\mid g\in \mathcal{G}\}$, such that every $f^\bbA\in\mathcal{F}^\bbA$ (resp.\ $g^\bbA\in\mathcal{G}^\bbA$) is an $n_f$-ary (resp.\ $n_g$-ary) operation on $\bbA$. A DLE is {\em normal} if every $f^\bbA\in\mathcal{F}^\bbA$ (resp.\ $g^\bbA\in\mathcal{G}^\bbA$) preserves finite (hence also empty) joins (resp.\ meets) in each coordinate with $\epsilon_f(i)=1$ (resp.\ $\epsilon_g(i)=1$) and reverses finite (hence also empty) meets (resp.\ joins) in each coordinate with $\epsilon_f(i)=\partial$ (resp.\ $\epsilon_g(i)=\partial$). Let $\mathbb{DLE}$ be the class of normal DLEs. Sometimes we will refer to certain DLEs as $\mathcal{L}_\mathrm{DLE}$-algebras when we wish to emphasize that these algebras have a compatible signature with the logical language we have fixed.
A distributive lattice is {\em perfect} if it is complete, completely distributive and completely join-generated by  its completely join-prime elements. Equivalently, a distributive lattice is perfect iff it is isomorphic to the lattice of up-sets of some poset. A normal DLE is {\em perfect} if its lattice-reduct is a perfect distributive lattice, and each $f$-operation (resp.\ $g$-operation) is completely join-preserving (resp.\ meet-preserving) in the coordinates $i$ such that $\epsilon_f(i) = 1$ (resp.~$\epsilon_g(i) = 1$) and completely meet-reversing (resp.\ join-reversing) in the coordinates $i$ such that $\epsilon_f(i) = \partial$ (resp.~$\epsilon_g(i) = \partial$). The {\em canonical extension}  of a  normal DLE $\mathbb{A} = (L, \mathcal{F}, \mathcal{G})$
 is  the perfect normal DLE $\mathbb{A}^\delta: = (L^\delta, \mathcal{F}^\sigma, \mathcal{G}^\pi)$, where $L^\delta$  is the canonical extension of  $L$ (cf. Definition \ref{def:canonical:extension}),
and $\mathcal{F}^\sigma: = \{f^\sigma\mid f\in \mathcal{F}\}$ and  $\mathcal{G}^\pi: = \{g^\pi\mid g\in \mathcal{G}\}$.
Canonical extensions of  Heyting algebras, Brouwerian algebras and bi-Heyting algebras are defined by instantiating the definition above in the corresponding  signatures. The canonical extension of any Heyting (resp.~Brouwerian, bi Heyting) algebra is a (perfect) Heyting (resp.~Brouwerian, bi-Heyting) algebra.
\end{definition}

In the remainder of the paper, we will abuse notation and write e.g.\ $f$ for $f^\bbA$ when this causes no confusion.
Normal DLEs constitute the main semantic environment of the present paper. Henceforth, since every DLE is assumed to be normal, the adjective will be typically dropped.
The class of all DLEs is equational, and can be axiomatized by the usual distributive lattice identities and the following equations for any $f\in \mathcal{F}$ (resp.\ $g\in \mathcal{G}$) and $1\leq i\leq n_f$ (resp.\ for each $1\leq j\leq n_g$):
	\begin{itemize}
		\item if $\varepsilon_f(i) = 1$, then 
		\[
		f(p_1,\ldots, p\vee q,\ldots,p_{n_f}) = f(p_1,\ldots, p,\ldots,p_{n_f})\vee f(p_1,\ldots, q,\ldots,p_{n_f});
		\] 
		moreover if $f\in \mathcal{F}_n$, then $f(p_1,\ldots, \bot,\ldots,p_{n_f}) = \bot$,
		\item if $\varepsilon_f(i) = \partial$, then 
		\[f(p_1,\ldots, p\wedge q,\ldots,p_{n_f}) = f(p_1,\ldots, p,\ldots,p_{n_f})\vee f(p_1,\ldots, q,\ldots,p_{n_f});
		\]
		moreover if $f\in \mathcal{F}_n$, then  $f(p_1,\ldots, \top,\ldots,p_{n_f}) = \bot$,
		\item if $\varepsilon_g(j) = 1$, then \[g(p_1,\ldots, p\wedge q,\ldots,p_{n_g}) = g(p_1,\ldots, p,\ldots,p_{n_g})\wedge g(p_1,\ldots, q,\ldots,p_{n_g});\]
		moreover if $g\in \mathcal{G}_n$, then  $g(p_1,\ldots, \top,\ldots,p_{n_g}) = \top$,
		\item if $\varepsilon_g(j) = \partial$, then 
		\[g(p_1,\ldots, p\vee q,\ldots,p_{n_g}) = g(p_1,\ldots, p,\ldots,p_{n_g})\wedge g(p_1,\ldots, q,\ldots,p_{n_g});
		\]
		moreover if $g\in \mathcal{G}_n$, then  $g(p_1,\ldots, \bot,\ldots,p_{n_g}) = \top$.
	\end{itemize}
	Each language $\mathcal{L}_\mathrm{DLE}$ is interpreted in the appropriate class of DLEs. In particular, for every DLE $\bbA$, each operation $f^\bbA\in \mathcal{F}^\bbA$ (resp.\ $g^\bba\in \mathcal{G}^\bbA$) is finitely join-preserving (resp.\ meet-preserving) in each coordinate when regarded as a map $f^\bbA: \bba^{\varepsilon_f}\to \bba$ (resp.\ $g^\bba: \bba^{\varepsilon_g}\to \bba$). %and each operation $f^\bba\in \mathcal{F}_r^\bbA$ (resp.\ $g^\bba\in \mathcal{G}_r^\bbA$) preserves nonempty joins (resp.\ nonempty meets) in each coordinate when regarded as a map $f^\bba: \bba^{\varepsilon_f}\to \bba$ (resp.\ $g^\bba: \bba^{\varepsilon_g}\to \bba$).
%Typically, lattice-based logics of this kind are not expressive enough to allow an implication-like term to be defined out of the primitive connectives. Therefore the entailment relation cannot be recovered from the set of tautologies, hence the deducibility has to be defined in terms of sequents. This motivates the following:	
	%It is well known  that the generic LE-logic lacks the deduction theorem. Hence, the consequence relation of these logics cannot be uniformly captured in terms of theorems, but rather in terms of sequents, which motivates the following definition:
	
	For every DLE $\bbA$, the symbol $\vdash$ is interpreted as the lattice order $\leq$. A sequent $\phi\vdash\psi$ is valid in $\bba$ if $h(\phi)\leq h(\psi)$ for every homomorphism $h$ from the $\mathcal{L}_\mathrm{DLE}$-algebra of formulas over $\mathsf{PROP}$ to $\bba$. The notation $\mathbb{DLE}\models\phi\vdash\psi$ indicates that $\phi\vdash\psi$ is valid in every DLE. Then, by means of a routine Lindenbaum-Tarski construction, it can be shown that the minimal DLE-logic $\mathbf{L}_\mathrm{DLE}$ is sound and complete with respect to its corresponding class of algebras $\mathbb{DLE}$, i.e.\ that any sequent $\phi\vdash\psi$ is provable in $\mathbf{L}_\mathrm{DLE}$ iff $\mathbb{DLE}\models\phi\vdash\psi$. %Moreover, it is not hard to see that every consistent DLE-logic is characterized by the class of algebras for it.
%\marginnote{add examples of intuitionistic, bi-intuitionistic (modal) languages}

\subsection{Inductive and very-simple Sahlqvist inequalities}

				%A DLE-inequality %(resp.\ DLE$^*$-inequality)
				%is an expression of the form $s\leq t$ where $s,t\in\mathcal{L}_\mathrm{DLE}$ %(resp.\ $s,t\in\mathcal{L}_\mathrm{DLE}^*$),
				%which is essentially a sequent in algebraic form.
				
				In the present subsection, we will report on the definition of {\em inductive} $\mathcal{L}_\mathrm{DLE}$-inequalities
				%(resp.\ $\mathcal{L}_\mathrm{DLE}^*$),
				on which the algorithm ALBA is guaranteed to succeed %equivalently transform into one (or the conjunction of more) pure quasi-inequalities in an expanded language. For more details,
				(cf.\ \cite{CoGhPa14,conradie2012algorithmic}).
				
				\begin{definition}[\textbf{Signed Generation Tree}]
					\label{def: signed gen tree}
					The \emph{positive} (resp.\ \emph{negative}) {\em generation tree} of any $\mathcal{L}_\mathrm{DLE}$-term $s$ is defined by labelling the root node of the generation tree of $s$ with the sign $+$ (resp.\ $-$), and then propagating the labelling on each remaining node as follows:
					\begin{itemize}
						%\item The root node $+s$ (resp.\ $-s$) is the root node of the positive (resp.\ negative) generation tree of $s$ signed with + (resp.\ $-$).
						\item For any node labelled with $ \lor$ or $\land$, assign the same sign to its children nodes.
						%\item If a node is labelled with $\lhd$, $\rhd$, assign the opposite sign to its child node.
						\item For any node labelled with $h\in \mathcal{F}\cup \mathcal{G}$ of arity $n_h\geq 1$, and for any $1\leq i\leq n_h$, assign the same (resp.\ the opposite) sign to its $i$th child node if $\varepsilon_h(i) = 1$ (resp.\ if $\varepsilon_h(i) = \partial$).
					\end{itemize}
					Nodes in signed generation trees are \emph{positive} (resp.\ \emph{negative}) if are signed $+$ (resp.\ $-$).
				\end{definition}
				
				Signed generation trees will be mostly used in the context of term inequalities $s\leq t$. In this context we will typically consider the positive generation tree $+s$ for the left-hand side and the negative one $-t$ for the right-hand side. We will also say that a term-inequality $s\leq t$ is \emph{uniform} in a given variable $p$ if all occurrences of $p$ in both $+s$ and $-t$ have the same sign, and that $s\leq t$ is $\epsilon$-\emph{uniform} in a (sub)array $\overline{p}$ of its variables if $s\leq t$ is uniform in $p$, occurring with the sign indicated by $\epsilon$, for every $p$ in $\overline{p}$.
				
				For any term $s(p_1,\ldots p_n)$, any order type $\epsilon$ over $n$, and any $1 \leq i \leq n$, an \emph{$\epsilon$-critical node} in a signed generation tree of $s$ is a leaf node $+p_i$ with $\epsilon_i = 1$ or $-p_i$ with $\epsilon_i = \partial$. An $\epsilon$-{\em critical branch} in the tree is a branch from an $\epsilon$-critical node. The intuition, which will be built upon later, is that variable occurrences corresponding to $\epsilon$-critical nodes are \emph{to be solved for}, according to $\epsilon$.
				
				For every term $s(p_1,\ldots p_n)$ and every order type $\epsilon$, we say that $+s$ (resp.\ $-s$) {\em agrees with} $\epsilon$, and write $\epsilon(+s)$ (resp.\ $\epsilon(-s)$), if every leaf in the signed generation tree of $+s$ (resp.\ $-s$) is $\epsilon$-critical.
				In other words, $\epsilon(+s)$ (resp.\ $\epsilon(-s)$) means that all variable occurrences corresponding to leaves of $+s$ (resp.\ $-s$) are to be solved for according to $\epsilon$. We will also write $+s'\prec \ast s$ (resp.\ $-s'\prec \ast s$) to indicate that the subterm $s'$ inherits the positive (resp.\ negative) sign from the signed generation tree $\ast s$. Finally, we will write $\epsilon(\gamma) \prec \ast s$ (resp.\ $\epsilon^\partial(\gamma) \prec \ast s$) to indicate that the signed subtree $\gamma$, with the sign inherited from $\ast s$, agrees with $\epsilon$ (resp.\ with $\epsilon^\partial$).
				\begin{definition}
					\label{def:good:branch}
					Nodes in signed generation trees will be called \emph{$\Delta$-adjoints}, \emph{syntactically left residual (SLR)}, \emph{syntactically right residual (SRR)}, and \emph{syntactically right adjoint (SRA)}, according to the specification given in Table \ref{Join:and:Meet:Friendly:Table}.
					A branch in a signed generation tree $\ast s$, with $\ast \in \{+, - \}$, is called a \emph{good branch} if it is the concatenation of two paths $P_1$ and $P_2$, one of which may possibly be of length $0$, such that $P_1$ is a path from the leaf consisting (apart from variable nodes) only of PIA-nodes\footnote{For explanations of our choice of terminologies here, we refer to \cite[Remark 3.24]{PaSoZh15r}.}, and $P_2$ consists (apart from variable nodes) only of Skeleton-nodes. %\footnote{These classes are grouped together into the super-classes \emph{Skeleton} and \emph{PIA} as indicated in the table. This organization is motivated and discussed in \cite{CFPS} and \cite{CoGhPa13}. %to establish a connection with analogous terminology in \cite{BeHovB12}.}
					\begin{table}[ht]
						\begin{center}
							\begin{tabular}{| c | c |}
								\hline
								Skeleton &PIA\\
								\hline
								$\Delta$-adjoints & SRA \\
								\begin{tabular}{ c c c c c c}
									$+$ &$\vee$ &$\wedge$ &$\phantom{\lhd}$ & &\\
									$-$ &$\wedge$ &$\vee$\\
									\hline
								\end{tabular}
								&
								\begin{tabular}{c c c c }
									$+$ &$\wedge$ &$g$ & with $n_g = 1$ \\
									$-$ &$\vee$ &$f$ & with $n_f = 1$ \\
									\hline
								\end{tabular}
								\\
								SLR &SRR\\
								\begin{tabular}{c c c c }
									$+$ & $\wedge$ &$f$ & with $n_f \geq 1$\\
									$-$ & $\vee$ &$g$ & with $n_g \geq 1$ \\
								\end{tabular}
								&\begin{tabular}{c c c c}
									$+$ &$\vee$ &$g$ & with $n_g \geq 2$\\
									$-$ & $\wedge$ &$f$ & with $n_f \geq 2$\\
								\end{tabular}
								\\
								\hline
							\end{tabular}
						\end{center}
						\caption{Skeleton and PIA nodes for $\mathrm{DLE}$.}\label{Join:and:Meet:Friendly:Table}
						\vspace{-1em}
					\end{table}
				\end{definition}
				
				\begin{definition}[Inductive inequalities]
					\label{Inducive:Ineq:Def}
					For any order type $\epsilon$ and any irreflexive and transitive relation $<_\Omega$ on $p_1,\ldots p_n$, the signed generation tree $*s$ $(* \in \{-, + \})$ of a term $s(p_1,\ldots p_n)$ is \emph{$(\Omega, \epsilon)$-inductive} if
					\begin{enumerate}
						\item for all $1 \leq i \leq n$, every $\epsilon$-critical branch with leaf $p_i$ is good (cf.\ Definition \ref{def:good:branch});
						\item every $m$-ary SRR-node occurring in the critical branch is of the form 
						\[	\circledast(\gamma_1,\dots,\gamma_{j-1},\beta,\gamma_{j+1}\ldots,\gamma_m),\] where for any $h\in\{1,\ldots,m\}\setminus j$: %$\gamma \in \{\gamma_1,\ldots,\gamma_{j-1},\gamma_{j+1},\ldots,\alpha_m\}$
						\begin{enumerate}
							\item $\epsilon^\partial(\gamma_h) \prec \ast s$ (cf.\ discussion before Definition \ref{def:good:branch}), and
							%\item $\epsilon^\partial(\ast \gamma)$, and
							%
							\item $p_k <_{\Omega} p_i$ for every $p_k$ occurring in $\gamma_h$ and for every $1\leq k\leq n$.
						\end{enumerate}

						%for every SRR node $+g'(\overline\phi, \overline\psi, \theta)$ ($g'\in \mathcal{G}\cup\{\vee\}$) or $-f'(\overline\psi, \overline\phi, \theta)$ ($f'\in \mathcal{F}\cup\{\wedge\}$) %with $n_{f'}, n_{g'}\geq 2$
						%				on an $\epsilon$-critical branch running through $\theta$,\marginnote{$f'$ and $g'$ are just for the sake of including also $+\vee$ and $-\wedge$. Is this notation too disgusting?}
						%				\begin{itemize}
						%					\item $+\phi$ is $\epsilon^{\partial}$-uniform for every $\phi$ in $\overline \phi$, and $+\psi$ is $\epsilon$-uniform for every $\psi$ in $\overline \psi$, and
						%					\item $p_j <_{\Omega} p_i$ for every $p_j$ occurring in any term in $\overline\phi $ or in $\overline\psi$.
						%				\end{itemize}
					\end{enumerate}
					
					We will refer to $<_{\Omega}$ as the \emph{dependency order} on the variables. An inequality $s \leq t$ is \emph{$(\Omega, \epsilon)$-inductive} if the signed generation trees $+s$ and $-t$ are $(\Omega, \epsilon)$-inductive. An inequality $s \leq t$ is \emph{inductive} if it is $(\Omega, \epsilon)$-inductive for some $\Omega$ and $\epsilon$.
				\end{definition}

\begin{notation}
\label{notation:inductive:compact}
Following \cite{chen2021syntactic},  we will often write $(\Omega, \epsilon)$-inductive inequalities as follows: 
\begin{equation}
\label{eqn:Inductive:Compact}
(\varphi\leq \psi)[\overline{\alpha}/!\overline{x}, \overline{\beta}/!\overline{y},\overline{\gamma}/!\overline{z}, \overline{\delta}/!\overline{w}],
\end{equation}
where $(\varphi\leq \psi)[!\overline{x}, !\overline{y},!\overline{z}, !\overline{w}]$ contains only skeleton nodes, is positive (resp.\ negative) in $!\overline x$ and $!\overline z$ (resp.\ $!\overline y$ and $!\overline w$), and it is \emph{scattered}, i.e.\ each variable occurs only once; each $\alpha$ in $\overline\alpha$ (resp.\ $\beta$ in $\overline\beta$) is a positive (resp.\ negative) PIA.
\end{notation}

\begin{definition}[(Very simple) Sahlqvist inequality]
An inductive inequality \[(\varphi\leq \psi)[\overline{\alpha}/!\overline{x}, \overline{\beta}/!\overline{y},\overline{\gamma}/!\overline{z}, \overline{\delta}/!\overline{w}],\] is \emph{Sahlqvist} if every $\alpha$ in $\overline\alpha$ and $\beta$ in $\overline\beta$ contains only unary connectives. It is \emph{very simple Sahlqvist} if every $\alpha$ and $\beta$ is a propositional variable.
\end{definition}

In what follows, we will refer to  a formula $\chi$ such that $+\chi$ (resp.\ $-\chi$)  consists only of skeleton nodes as a \emph{positive} (resp.\ \emph{negative}) \emph{skeleton}; and we dub formulas $\zeta$ as positive (resp.\ negative) PIA if there is a path from a leaf to the root of $+\zeta$ (resp.\ $-\zeta$) consisting only of PIA nodes.

An $(\varepsilon,\Omega)$-inductive formula is \emph{definite} $(\varepsilon,\Omega)$-inductive if it has no $+\wedge$, and no $-\vee$ in the PIA portion of its $\varepsilon$-critical branches. Every inductive formula is equivalent to some conjunction of definite inductive formulas which is obtained through ALBA's $\mathsf{Preprocessing}$ step (cf. Appendix \ref{appendix:albarules}).

\begin{definition}[$\mathsf{LA}$ and $\mathsf{RA}$]
\label{def:RA_and_LA}
For every positive (definite) PIA formula $\varphi=\varphi(!x,\overline z)$ and negative PIA formula $\psi=\psi(!x,\overline z)$ where $x$ is a leaf of a PIA-path to the root, we define the formulas $\mathsf{LA}(\varphi)(u,\overline z)$ and $\mathsf{RA}(\psi)(u, \overline z)$ (with $u$ a new fresh variable) by simultaneous recursion:
\begin{center}
				\begin{tabular}{r c l}
					%$\mathsf{LA}^{\varepsilon}(\top)$ &= &$\bot$ \\
					$\mathsf{LA}(x)$ &= &$u$;\\
					%$\mathsf{LA}^{\varepsilon}(x_j)$ &= &$\bot^{\varepsilon_j}$ when $i \neq j$;\\
					$\mathsf{LA}(\Box \phi(x, \oz))$ &= &$\mathsf{LA}(\phi)(\Diamondblack u, \overline{z})$;\\
					%$\mathsf{LA}(\phi_1(x, \oz) \wedge \phi_2(\oz))$ &= &$\mathsf{LA}(\phi_1)(u, \overline{z})$;\\
					$\mathsf{LA}(\psi(\oz) \rightarrow \phi(x, \oz))$ &= &$\mathsf{LA}(\phi)(u \wedge \psi(\oz), \oz)$;\\
					$\mathsf{LA}(\phi_1(\oz) \vee \phi_2(x, \oz))$ &= &$\mathsf{LA}(\phi_2)(u - \phi_1(\oz), \oz)$;\\
					$\mathsf{LA}(\psi(x, \oz)\rightarrow\phi(\oz))$ &= &$\mathsf{RA}(\psi)(u \rightarrow \phi(\oz), \oz)$;\\
					$\mathsf{LA}(g(\overline{\phi_{-j}(\oz)},\phi_j(x,\oz), \overline{\psi(\oz)}))$ &= &$\mathsf{LA}(\phi_j)(g^{\flat}_{j}(\overline{\phi_{-j}(\oz)},u, \overline{\psi(\oz)} ), \oz)$;\\
					$\mathsf{LA}(g(\overline{\phi(\oz)}, \overline{\psi_{-j}(\oz)},\psi_j(x,\oz)))$ &= &$\mathsf{RA}(\psi_j)(g^{\flat}_{j}(\overline{\phi(\oz)}, \overline{\psi_{-j}(\oz)},u), \oz)$;\\
					&&\\
					%$\mathsf{RA}^{\varepsilon}(\bot)$ &= &$\top$ \\
					$\mathsf{RA}(x)$ &= &$u$;\\
					%$\mathsf{RA}^{\varepsilon}(x_j)$ &= &$\top^{\varepsilon_j}$ when $i \neq j$;\\
					$\mathsf{RA}(\Diamond \psi(x, \oz))$ &= &$\mathsf{RA}(\psi)(\blacksquare u, \overline{z})$;\\
					%$\mathsf{RA}(\psi_1(x, \oz) \vee \psi_2(\oz))$ &= &$\mathsf{RA}(\psi_1)(u, \overline{z})$;\\
					$\mathsf{RA}(\psi(x, \oz) - \phi(\oz))$ &= &$\mathsf{RA}(\psi)(\phi(\oz) \vee u, \oz)$;\\
					$\mathsf{RA}(\psi_1(\oz) \wedge \psi_2(x, \oz))$ &= &$\mathsf{RA}(\psi_2)(\psi_1(\oz) \rightarrow u, \oz)$;\\
					$\mathsf{RA}(\psi(\oz) - \phi(x, \oz))$ &= &$\mathsf{LA}(\phi)(\psi(\oz) - u, \oz)$;\\
					$\mathsf{RA}(f(\overline{\psi_{-j}(\oz)},\psi_j(x,\oz), \overline{\phi(\oz)}))$ &= &$\mathsf{RA}(\psi_j)(f^{\sharp}_{j}(\overline{\psi_{-j}(\oz)},u, \overline{\phi(\oz)} ), \oz)$;\\
					$\mathsf{RA}(f(\overline{\psi(\oz)}, \overline{\phi_{-j}(\oz)},\phi_j(x,\oz)))$ &= &$\mathsf{LA}(\phi_j)(f^{\sharp}_{j}(\overline{\psi(\oz)}, \overline{\phi_{-j}(\oz)},u), \oz)$.\\
				\end{tabular}
			\end{center}
\end{definition}

\subsection{ALBA}
ALBA is a calculus for correspondence that is based on the Ackermann lemma that has been shown to work on inductive inequalities in an effective way \cite{CoGhPa14,CoPa2012distr,CoPa:non-dist}. A comprehensive description of the algorithm for correspondence and its rules can be found in Appendix \ref{appendix:albarules}.

\subsubsection{ALBA languages}

ALBA manipulates inequalities and quasi-inequalities\footnote{A {\em quasi-inequality} of $\mathcal{L}_{\mathrm{DLE}}$ is an expression of the form $\bigmetaand_{i = 1}^n s_i\leq t_i \Rightarrow s\leq t$, where $s_i\leq t_i$ and $s\leq t$ are $\mathcal{L}_{\mathrm{DLE}}$-inequalities for each $i$.} in the expanded language $\mathcal{L}_\mathrm{DLE}^{+}$, which is built up on the base of the lattice constants $\top, \bot$ and an enlarged set of propositional variables 
$\mathsf{NOM}\cup \mathsf{CONOM}\cup \mathsf{AtProp}$ (the variables $\nomh, \mathbf{i}, \mathbf{j}, \nomk$ in $\mathsf{NOM}$ are referred to as \emph{nominals}, and the variables $\cnoml, \mathbf{m}, \mathbf{n}, \cnomo$ in $\mathsf{CONOM}$ as \emph{conominals}), 
closing under the logical connectives of $\mathcal{L}_\mathrm{DLE}^*$. The natural semantic environment of $\mathcal{L}_\mathrm{DLE}^{+}$ is given by {\em perfect} $\mathcal{L}_\mathrm{DLE}$-{\em algebras}, which therefore are both completely join-generated by their completely join-irreducible elements $\jty$ and completely meet-generated by their completely meet-irreducible elements $\mty$.
Nominals and conominals are interpreted over the sets of the completely join-irreducible elements and the completely meet-irreducible elements of perfect DLEs.

We will extensively write $\pureu, \purev, \purew$ to indicate generic \emph{pure variables}, i.e., variables in $\mathsf{NOM}\cup\mathsf{CONOM}$. The language of $\langalba$-inequalities is referred to as $\langineq$. An inequality in $\langineq$ whose variables are all pure is a \emph{pure inequality}.

To define ALBA's language for inverse correspondence, the following considerations will play a key role.

\begin{remark}
In a distributive lattice $\bba$, the maps $\kappa:\jty(\bba)\to\mty(\bba)$ defined by $j \mapsto \bigvee\{ a \in \bba \mid j \nleq a \}$ and $\lambda:\mty(\bba)\to\jty(\bba)$ defined by $m \mapsto \bigwedge \{ a \in \bba \mid a \nleq m \}$ are order isomorphism when considering $\jty(\bba)$ and $\mty(\bba)$ as subposets of $\bba$ \cite[Section 2.3]{GEHRKE200565}. 
\end{remark}

The following lemma follows plainly from the definitions of $\kappa$ and $\lambda$.

\begin{lemma}
Given a distributive lattice $\bba$, for every $j \in \jty(\bba), m \in \mty(\bba), a \in \bba$,
\[
\begin{array}{rcl}
\label{lemma:flipnegation}
j \nleq a & \mbox{iff} & a \leq \kappa(j) \\
a \nleq m & \mbox{iff} & \lambda(m) \leq a
\end{array}
\]
\end{lemma}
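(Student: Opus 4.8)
The plan is to prove the two biconditionals separately, noting that they are exact order-duals of one another, so that only the first needs a genuine argument and the second follows by dualising. For each biconditional I would split into the (trivial) forward direction, which is immediate from the fact that a join is an upper bound and a meet is a lower bound of its defining set, and the (substantive) backward direction, which I would handle by contraposition. The crux in both cases is a single \emph{separation} fact, namely $j \nleq \kappa(j)$ and dually $\lambda(m) \nleq m$; since the ambient $\bba$ is a perfect (hence completely distributive) distributive lattice, each $j \in \jty(\bba)$ is completely join-prime and each $m \in \mty(\bba)$ is completely meet-prime, and these separation facts drop out at once.

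Concretely, for the equivalence $j \nleq a$ iff $a \leq \kappa(j)$, the forward direction is settled by observing that if $j \nleq a$ then $a$ lies in the set $\{ b \in \bba \mid j \nleq b \}$ over which the join defining $\kappa(j)$ is taken, whence $a \leq \bigvee\{ b \in \bba \mid j \nleq b \} = \kappa(j)$. For the backward direction I would argue contrapositively: assuming $j \leq a$, if we also had $a \leq \kappa(j)$ then $j \leq a \leq \kappa(j)$, contradicting $j \nleq \kappa(j)$; hence $a \nleq \kappa(j)$. The separation fact $j \nleq \kappa(j)$ itself follows because, were $j \leq \bigvee\{ b \mid j \nleq b \}$, complete join-primeness of $j$ would yield some $b$ with simultaneously $j \leq b$ and $j \nleq b$, which is absurd.

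The second equivalence, $a \nleq m$ iff $\lambda(m) \leq a$, is proved by the order-dual of this argument: the forward direction places $a$ in the set $\{ b \in \bba \mid b \nleq m \}$ whose meet is $\lambda(m)$, giving $\lambda(m) \leq a$; and the backward direction uses, via contraposition, the dual separation fact $\lambda(m) \nleq m$, which holds because $\bigwedge\{ b \mid b \nleq m \} \leq m$ together with complete meet-primeness of $m$ would produce some $b$ with both $b \leq m$ and $b \nleq m$. The main obstacle — admittedly a mild one, consistent with the paper's remark that the lemma ``follows plainly'' — is simply to isolate the two separation facts and to recognise that they, rather than any finer structure, are exactly what the backward directions require, with the perfection of $\bba$ supplying the complete join-/meet-primeness that makes them hold.
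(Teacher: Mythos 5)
Your proof is correct. The paper itself offers no explicit argument---it merely asserts that the lemma ``follows plainly from the definitions of $\kappa$ and $\lambda$'', leaning on the preceding remark (with its citation) that $\kappa$ and $\lambda$ are order isomorphisms---and what you have written out is exactly the argument that assertion has in mind: the forward directions are the trivial upper-/lower-bound observations, and the backward directions reduce, by contraposition, to the two separation facts $j \nleq \kappa(j)$ and $\lambda(m) \nleq m$, which you correctly derive from complete join-primeness of $j$ and complete meet-primeness of $m$. One point you handled well deserves emphasis: as literally stated the lemma hypothesizes only ``a distributive lattice'', yet the arbitrary join and meet in the definitions of $\kappa$ and $\lambda$ presuppose completeness, and complete join-irreducibility does not imply complete join-primeness in an arbitrary complete lattice---your explicit appeal to perfection (equivalently, to the up-set-lattice representation) is precisely the hypothesis under which the paper actually uses the lemma, since $\kappa$ and $\lambda$ are only ever applied where nominals and conominals are interpreted, i.e.\ in perfect DLEs. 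As a minor remark, primeness can be bypassed: if $j \leq \kappa(j)$, the infinite distributive law gives $j = \bigvee\{\, j \wedge a \mid j \nleq a \,\}$ with each $j \wedge a$ strictly below $j$, contradicting complete join-irreducibility directly; but your route through primeness is the standard one and equally sound.
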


\begin{definition}[ALBA language for inverse correspondence]
\label{def:albametalang}
ALBA's language for inverse correspondence $\langmeta$ is generated by the following rules:
\[
\xi ::= s \leq t \ |\ \xi \metaand \xi  \ |\ \xi \metaor \xi  \ |\ \metanot \xi  \ |\ \xi \Rightarrow \xi \ | \ \forall \nomj\ \xi  \ |\  \forall \cnomm\ \xi  \ |\ \exists \nomj \ \xi  \ |\ \exists \cnomm\ \xi,
\]
where $\metaand$ (resp.\ $\metaor$) denotes the meta-linguistic conjunction (resp.\ disjunction), $\Rightarrow$ (resp.\ $\metanot$) denotes the meta-linguistic implication (resp.\ negation), and $s\leq t$ is an inequality in $\langineq$ enriched with symbols for $\kappa$ and $\lambda$, which are interpreted as the isomorphisms between the join and meet irreducibles of the interpretation algebra.
\end{definition}

\subsubsection{Right-handed Ackermann lemmas}

\begin{lemma}[Right-handed universal Ackermann]
Let $\alpha, \beta(p), \gamma(p), \delta(p), \varepsilon(p)$ be formulas of a language $\mathcal{L}^+$ over the set of variables $\atprop$; let $p \in \atprop$ such that $p\notin FV(\alpha)$; if $\beta$ and $\varepsilon$ are positive in $p$ and $\gamma$ and $\delta$ are negative in $p$, then
 the following are equivalent for every $\mathcal{L}^+$-algebra  $\mathbb{A}$: %and any $\mathcal{L}^+$-valuation $V$ on $\mathcal{F}$:

\begin{enumerate}

\item[(a)] %there exists some $p$-variant $V^\ast$ of $V$ such that $\mathcal{F}, V^\ast\Vdash \alpha\leq p$ and \begin{center}
$\mathbb{A}\models [(\alpha\leq p\ \&\ \beta(p)\leq\gamma(p))\ \Rightarrow\ \delta(p)\leq \varepsilon(p)]$; %\end{center}
\item[(b)] $ \mathbb{A}\models (\beta(\alpha/p)\leq \gamma(\alpha/p)\ \Rightarrow\ \delta(\alpha/p)\leq \varepsilon(\alpha/p))$.
\end{enumerate}
\end{lemma}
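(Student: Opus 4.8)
The plan is to prove the two implications (a)$\Rightarrow$(b) and (b)$\Rightarrow$(a) separately, after first unravelling the meta-linguistic validity $\mathbb{A}\models(\cdots)$ into a statement quantified over all assignments $V\colon\atprop\to\mathbb{A}$. Throughout I would lean on two standard facts about the interpretation $\val{\cdot}_V$ induced by an assignment: the \emph{substitution lemma}, which gives $\val{\eta(\alpha/p)}_V=\val{\eta}_{V[p:=\val{\alpha}_V]}$ for every formula $\eta$ (here it is crucial that $p\notin FV(\alpha)$, so that $\val{\alpha}_V$ does not depend on the value assigned to $p$); and the \emph{monotonicity of the connectives}, which ensures that when $\eta$ is positive (resp.\ negative) in $p$, the map $a\mapsto\val{\eta}_{V[p:=a]}$ is order-preserving (resp.\ order-reversing) in $a$, with all other coordinates fixed.

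For (a)$\Rightarrow$(b) I would simply instantiate $p$ by $\alpha$. Fix an assignment $V$ and suppose $\val{\beta(\alpha/p)}_V\leq\val{\gamma(\alpha/p)}_V$. Pass to the modified assignment $V':=V[p:=\val{\alpha}_V]$; by the substitution lemma the hypothesis reads $\val{\beta(p)}_{V'}\leq\val{\gamma(p)}_{V'}$, and moreover $\val{\alpha}_{V'}=\val{\alpha}_V=\val{p}_{V'}$, so the first conjunct $\alpha\leq p$ of the antecedent of (a) holds at $V'$ (with equality). Applying (a) at $V'$ yields $\val{\delta(p)}_{V'}\leq\val{\varepsilon(p)}_{V'}$, which by the substitution lemma is exactly $\val{\delta(\alpha/p)}_V\leq\val{\varepsilon(\alpha/p)}_V$, as required. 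Note that this direction uses only the substitution lemma, not monotonicity.

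For (b)$\Rightarrow$(a) I would use monotonicity to squeeze the substituted inequalities between the original ones. Fix $V$ with $\val{\alpha}_V\leq\val{p}_V$ and $\val{\beta(p)}_V\leq\val{\gamma(p)}_V$. Since $\beta$ is positive and $\gamma$ negative in $p$, combining monotonicity with the substitution lemma gives $\val{\beta(\alpha/p)}_V\leq\val{\beta(p)}_V$ and $\val{\gamma(p)}_V\leq\val{\gamma(\alpha/p)}_V$; chaining these with the hypothesis produces $\val{\beta(\alpha/p)}_V\leq\val{\gamma(\alpha/p)}_V$, i.e.\ the antecedent of (b) at $V$. Feeding this into (b) yields $\val{\delta(\alpha/p)}_V\leq\val{\varepsilon(\alpha/p)}_V$. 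A second application of monotonicity, now using that $\delta$ is negative and $\varepsilon$ positive in $p$, gives $\val{\delta(p)}_V\leq\val{\delta(\alpha/p)}_V$ and $\val{\varepsilon(\alpha/p)}_V\leq\val{\varepsilon(p)}_V$, and chaining the three inequalities delivers $\val{\delta(p)}_V\leq\val{\varepsilon(p)}_V$, establishing (a).

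The conceptual content is entirely in the observation that, under the four polarity hypotheses, substituting the $p$-free term $\alpha$ for $p$ realises the least value of $p$ compatible with the constraint $\alpha\leq p$, so that the substituted instance is the ``worst case''. I expect the only genuinely delicate point to be the bookkeeping of the four polarity conditions: one must check that each is invoked with the correct orientation (positive connectives preserve, negative connectives reverse the order) and attached to the correct inequality, so that all four chains close in the intended direction. The substitution lemma and the monotonicity of term interpretations (read off the signed generation trees via the order-type conventions fixed in the preliminaries) are routine and may be cited rather than reproved.
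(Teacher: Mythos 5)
Your proof is correct and takes essentially the same approach as the paper: the (b)$\Rightarrow$(a) direction coincides with the paper's monotonicity-chain argument, and your (a)$\Rightarrow$(b) direction uses the very same $p$-variant $V[p:=\val{\alpha}_V]$ together with the substitution-lemma chain of equalities, merely phrased directly rather than contrapositively as in the paper. The only difference is cosmetic, so nothing further is needed.
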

\begin{proof}
For the direction from (a) to (b), let us argue contrapositively and let $\mathbb{A}$ and $V$ be such that $ (\mathbb{A}, V)\models\beta(\alpha/p)\leq \gamma(\alpha/p)$ but $( \mathbb{A}, V)\not\models \delta(\alpha/p)\leq \varepsilon(\alpha/p)$. Let $V^\ast$ be the $p$-variant of $V$ such that $V^\ast(p): = \val{\alpha}_{V}$. Since the variable $p$ does not occur in $\alpha$, this implies that  $\val{\alpha}_{V^\ast} = \val{\alpha}_{V} = V^\ast(p)$, which proves that $(\mathbb{A}, V^\ast)\models \alpha\leq p$. The second part of the statement immediately follows from the observation that, for every formula $\xi$, the following chain of equalities holds:  $\val{\xi(p)}_{V^\ast} = \val{\xi}(V^\ast(p)) = \val{\xi}(\val{\alpha}_V) = \val{\xi(\alpha/p)}_{V}$.\\
Conversely, let $\mathbb{A}$ and $V$ be such that $(\mathbb{A}, V)\models\alpha\leq p$  and $(\mathbb{A}, V)\models \beta(p)\leq \gamma(p)$. Since $\val{\alpha}_{V}\leq V(p)$, and $\beta$ and $\gamma$ are respectively   positive and negative in $p$, this implies that  $\val{\beta(\alpha/p)}_{V} \leq \val{\beta(p)}_{V}\leq \val{\gamma(p)}_{V}\leq \val{\gamma(\alpha/p)}_{V}$, which proves that $(\mathbb{A}, V)\models \beta(\alpha/p)\leq\gamma(\alpha/p)$. Hence, by (b), we conclude that $(\mathbb{A}, V)\models \delta(\alpha/p)\leq \varepsilon(\alpha/p)$. Therefore,  since $\delta$ and $\varepsilon$ are respectively   negative and positive in $p$, the following chain of inequalities holds:
$\val{\delta(p)}_{V}\leq \val{\delta(\alpha/p)}_{V} \leq \val{\varepsilon(\alpha/p)}_{V} \leq \val{\varepsilon(p)}_{V}$, which finishes the proof of (a).
\end{proof}
Notice that in the following, existential version,  $\beta$, $\varepsilon$, $\gamma$ and $\delta$ are assumed to have the opposite polarities they had in the universal version above.
\begin{lemma}[Right-handed existential Ackermann]
Let $\alpha, \beta(p), \gamma(p), \delta(p), \varepsilon(p)$ be formulas of a language $\mathcal{L}^+$ over the set of variables $\mathsf{PROP}$; let $p \in \mathsf{PROP}$ such that $p\notin FV(\alpha)$; if $\beta$ and $\varepsilon$ are negative in $p$ and $\gamma$ and $\delta$ are positive in $p$, then
 the following are equivalent for every $\mathcal{L}^+$-algebra  $\mathbb{A}$ and any $\mathcal{L}^+$-valuation $V$ on $\mathbb{A}$:

\begin{enumerate}
\item[(a)] $ (\mathbb{A}, V)\models(\beta(\alpha/p)\leq \gamma(\alpha/p)\ \Rightarrow\ \delta(\alpha/p)\leq \varepsilon(\alpha/p))$;

\item[(b)] %there exists some $p$-variant $V^\ast$ of $V$ such that $(\mathbb{A}, V^\ast)\models \alpha\leq p$ and \begin{center}
$(\mathbb{A}, V^\ast)\models \alpha\leq p$ and 
$ (\mathbb{A}, V^\ast)\models (\beta(p)\leq\gamma(p)\ \Rightarrow\ \delta(p)\leq \varepsilon(p))$ for some $p$-variant $V^\ast$ of $V$.  %\end{center}
\end{enumerate}
\end{lemma}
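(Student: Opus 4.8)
The plan is to mirror the proof of the universal version given just above, reversing every monotonicity direction to account for the flipped polarities. The one tool I would use repeatedly is the substitution lemma: since $p\notin FV(\alpha)$, for every formula $\xi$ we have $\val{\xi(\alpha/p)}_{V} = \val{\xi(p)}_{V'}$, where $V'$ is the $p$-variant of $V$ with $V'(p):=\val{\alpha}_V$. (This uses $p\notin FV(\alpha)$, which guarantees $\val{\alpha}_{V'}=\val{\alpha}_V$.)

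For the direction from (a) to (b), I would simply take $V^\ast:=V'$, the $p$-variant sending $p$ to $\val{\alpha}_V$. Then $\val{\alpha}_{V^\ast}=\val{\alpha}_V=V^\ast(p)$, so $(\mathbb{A},V^\ast)\models\alpha\leq p$ holds (indeed with equality). By the substitution lemma, each of $\beta(p)\leq\gamma(p)$ and $\delta(p)\leq\varepsilon(p)$ evaluated under $V^\ast$ coincides with its $\alpha$-substituted counterpart evaluated under $V$, so the meta-implication in (a) transfers verbatim to $V^\ast$, yielding (b). This direction uses only the substitution lemma and not the monotonicity hypotheses.

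For the converse, suppose (b) holds for some $p$-variant $V^\ast$ with $\val{\alpha}_V=\val{\alpha}_{V^\ast}\leq V^\ast(p)$, and assume the antecedent $(\mathbb{A},V)\models\beta(\alpha/p)\leq\gamma(\alpha/p)$; the goal is $(\mathbb{A},V)\models\delta(\alpha/p)\leq\varepsilon(\alpha/p)$. Since $\beta$ is negative and $\gamma$ positive in $p$, the inequality $\val{\alpha}_V\leq V^\ast(p)$ gives $\val{\beta(p)}_{V^\ast}\leq\val{\beta(\alpha/p)}_V$ and $\val{\gamma(\alpha/p)}_V\leq\val{\gamma(p)}_{V^\ast}$; chaining these around the assumed antecedent produces $(\mathbb{A},V^\ast)\models\beta(p)\leq\gamma(p)$. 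The quasi-inequality in (b) then forces $\val{\delta(p)}_{V^\ast}\leq\val{\varepsilon(p)}_{V^\ast}$. Finally, $\delta$ positive and $\varepsilon$ negative in $p$ give $\val{\delta(\alpha/p)}_V\leq\val{\delta(p)}_{V^\ast}\leq\val{\varepsilon(p)}_{V^\ast}\leq\val{\varepsilon(\alpha/p)}_V$, which is exactly $(\mathbb{A},V)\models\delta(\alpha/p)\leq\varepsilon(\alpha/p)$.

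The computations are routine; the only thing to watch is the bookkeeping, namely keeping straight which valuation ($V$ or $V^\ast$) each term is evaluated under and applying the correct monotonicity sign against the now-inverted polarities of $\beta,\gamma,\delta,\varepsilon$. The genuinely existential content lives in the converse: unlike the universal lemma, here $V^\ast(p)$ is only bounded below by $\val{\alpha}_V$ rather than equal to it, so the substitution lemma alone does not suffice and the monotonicity hypotheses become essential. Thus the main (minor) obstacle is ensuring that the single inequality $\val{\alpha}_V\leq V^\ast(p)$ is invoked with the right orientation at each of its four uses.
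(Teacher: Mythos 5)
Your proof is correct and takes essentially the same route as the paper's: for (a)$\Rightarrow$(b) you choose the minimal $p$-variant $V^\ast(p):=\val{\alpha}_V$ and invoke the substitution lemma, and for the converse you chain the antitonicity of $\beta,\varepsilon$ and monotonicity of $\gamma,\delta$ around the single inequality $\val{\alpha}_V\leq V^\ast(p)$, exactly as the paper does. Your closing observation that only the converse genuinely uses the polarity hypotheses (since there $V^\ast(p)$ is merely bounded below by $\val{\alpha}_V$) is also accurate and implicit in the paper's argument.
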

\begin{proof}
For the direction from (a) to (b), let $\mathbb{A}$ and $V$ be as above; let $V^\ast$ be the $p$-variant of $V$ such that $V^\ast(p): = \val{\alpha}_{V}$. Since the variable $p$ does not occur in $\alpha$, this implies that  $\val{\alpha}_{V^\ast} = \val{\alpha}_{V} = V^\ast(p)$, which proves that $(\mathbb{A}, V^\ast)\models \alpha\leq p$. The second part of the statement immediately follows from the observation that, for every formula $\xi$, the following chain of equalities holds:  $\val{\xi(p)}_{V^\ast} = \val{\xi}(V^\ast(p)) = \val{\xi}(\val{\alpha}_V) = \val{\xi(\alpha/p)}_{V}$.\\
Conversely, let $\mathbb{A}$ and $V$ be as above, and assume that $(\mathbb{A}, V)\models \beta(\alpha/p)\leq \gamma(\alpha/p)$. Since $\val{\alpha}_{V} = \val{\alpha}_{V^\ast}\leq V^\ast(p)$, and  $\beta$ and $\gamma$ are respectively  negative and positive in $p$, this implies that  $\val{\beta(p)}_{V^\ast} \leq \val{\beta(\alpha/p)}_{V}\leq \val{\gamma(\alpha/p)}_{V}\leq \val{\gamma(p)}_{V^\ast}$, which proves that $(\mathbb{A}, V^\ast)\models \beta(p)\leq\gamma(p)$. Hence, by (b), we conclude that $(\mathbb{A}, V^\ast)\models \delta(p)\leq \varepsilon(p)$. Therefore, since $\delta$ and $\varepsilon$ are respectively  positive and negative in $p$, the following chain of inequalities holds:
$\val{\delta(\alpha/p)}_{V}\leq \val{\delta(p)}_{V^\ast} \leq \val{\varepsilon(p)}_{V^\ast} \leq \val{\varepsilon(\alpha/p)}_{V}$, which finishes the proof of (a).
\end{proof}

\subsubsection{Left-handed Ackermann lemmas}

\begin{lemma}[Left-handed universal Ackermann]
Let $\alpha, \beta(p), \gamma(p), \delta(p), \varepsilon(p)$ be formulas of a language $\mathcal{L}^+$ over the set of variables $\atprop$; let $p \in \atprop$ such that $p\notin FV(\alpha)$; if $\beta$ and $\varepsilon$ are negative in $p$ and $\gamma$ and $\delta$ are positive in $p$, then
 the following are equivalent for every $\mathcal{L}^+$-algebra  $\mathbb{A}$: %and any $\mathcal{L}^+$-valuation $V$ on $\mathcal{F}$:

\begin{enumerate}

\item[(a)] %there exists some $p$-variant $V^\ast$ of $V$ such that $\mathcal{F}, V^\ast\Vdash \alpha\leq p$ and \begin{center}
$\mathbb{A}\models [(p\leq \alpha\ \&\ \beta(p)\leq\gamma(p))\ \Rightarrow\ \delta(p)\leq \varepsilon(p)]$; %\end{center}
\item[(b)] $ \mathbb{A}\models (\beta(\alpha/p)\leq \gamma(\alpha/p)\ \Rightarrow\ \delta(\alpha/p)\leq \varepsilon(\alpha/p))$.
\end{enumerate}
\end{lemma}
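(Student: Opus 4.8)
The plan is to mirror, \emph{mutatis mutandis}, the argument already given for the right-handed universal Ackermann lemma, since the present statement is its order-dual: the side condition is now $p\leq\alpha$ rather than $\alpha\leq p$, and the monotonicity hypotheses on $\beta,\gamma,\delta,\varepsilon$ are exactly flipped. Accordingly I would carry out the same two-step case analysis, taking care that every invocation of monotonicity points the inequalities in the direction dictated by the new polarities.

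For the direction from (a) to (b), I would argue contrapositively. Given $\mathbb{A}$ and $V$ with $(\mathbb{A},V)\models\beta(\alpha/p)\leq\gamma(\alpha/p)$ but $(\mathbb{A},V)\not\models\delta(\alpha/p)\leq\varepsilon(\alpha/p)$, I would define the $p$-variant $V^\ast$ of $V$ by $V^\ast(p):=\val{\alpha}_V$. Since $p\notin FV(\alpha)$, we have $\val{\alpha}_{V^\ast}=\val{\alpha}_V=V^\ast(p)$, so $(\mathbb{A},V^\ast)\models p\leq\alpha$ (in fact equality holds). The substitution identity $\val{\xi(p)}_{V^\ast}=\val{\xi}(V^\ast(p))=\val{\xi}(\val{\alpha}_V)=\val{\xi(\alpha/p)}_V$, valid for every formula $\xi$ and independent of polarity, then transfers the hypothesis and the negated conclusion from $V$ to $V^\ast$, so that $(\mathbb{A},V^\ast)$ witnesses the failure of (a).

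For the converse, I would assume $(\mathbb{A},V)\models p\leq\alpha$ and $(\mathbb{A},V)\models\beta(p)\leq\gamma(p)$. From $V(p)\leq\val{\alpha}_V$, using that $\beta$ is negative and $\gamma$ positive in $p$, I obtain $\val{\beta(\alpha/p)}_V\leq\val{\beta(p)}_V\leq\val{\gamma(p)}_V\leq\val{\gamma(\alpha/p)}_V$, hence $(\mathbb{A},V)\models\beta(\alpha/p)\leq\gamma(\alpha/p)$; applying (b) yields $(\mathbb{A},V)\models\delta(\alpha/p)\leq\varepsilon(\alpha/p)$. Finally, since $\delta$ is positive and $\varepsilon$ negative in $p$, the chain $\val{\delta(p)}_V\leq\val{\delta(\alpha/p)}_V\leq\val{\varepsilon(\alpha/p)}_V\leq\val{\varepsilon(p)}_V$ gives $(\mathbb{A},V)\models\delta(p)\leq\varepsilon(p)$, completing (a).

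I do not expect a genuine obstacle here: the content is entirely routine and dual to the right-handed case. The only point requiring care is bookkeeping --- ensuring that, with $p$ now bounded \emph{above} by $\alpha$ and the polarities of $\beta,\gamma,\delta,\varepsilon$ reversed, both sandwiching chains of inequalities close in the correct direction; the hypothesis $p\notin FV(\alpha)$ is used exactly once, to guarantee $\val{\alpha}_{V^\ast}=\val{\alpha}_V$ when defining the variant $V^\ast$.
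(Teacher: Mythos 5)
Your proposal is correct and coincides with the paper's own proof: the same contrapositive argument via the $p$-variant $V^\ast(p):=\val{\alpha}_V$ for (a)$\Rightarrow$(b), and the same two sandwiching chains $\val{\beta(\alpha/p)}_V\leq\val{\beta(p)}_V\leq\val{\gamma(p)}_V\leq\val{\gamma(\alpha/p)}_V$ and $\val{\delta(p)}_V\leq\val{\delta(\alpha/p)}_V\leq\val{\varepsilon(\alpha/p)}_V\leq\val{\varepsilon(p)}_V$ for the converse, with all polarities invoked exactly as in the paper. Nothing is missing.
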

\begin{proof}
For the direction from (a) to (b), let us argue contrapositively and let $\mathbb{A}$ and $V$ be such that $ (\mathbb{A}, V)\models\beta(\alpha/p)\leq \gamma(\alpha/p)$ but $( \mathbb{A}, V)\not\models \delta(\alpha/p)\leq \varepsilon(\alpha/p)$. Let $V^\ast$ be the $p$-variant of $V$ such that $V^\ast(p): = \val{\alpha}_{V}$. Since the variable $p$ does not occur in $\alpha$, this implies that  $\val{\alpha}_{V^\ast} = \val{\alpha}_{V} = V^\ast(p)$, which proves that $(\mathbb{A}, V^\ast)\models p\leq \alpha$. The second part of the statement immediately follows from the observation that, for every formula $\xi$, the following chain of equalities holds:  $\val{\xi(p)}_{V^\ast} = \val{\xi}(V^\ast(p)) = \val{\xi}(\val{\alpha}_V) = \val{\xi(\alpha/p)}_{V}$.\\
Conversely, let $\mathbb{A}$ and $V$ be such that $(\mathbb{A}, V)\models p\leq \alpha$  and $(\mathbb{A}, V)\models \beta(p)\leq \gamma(p)$. Since $V(p)\leq \val{\alpha}_{V}$, and $\beta$ and $\gamma$ are respectively   negative and positive in $p$, this implies that  $\val{\beta(\alpha/p)}_{V} \leq \val{\beta(p)}_{V}\leq \val{\gamma(p)}_{V}\leq \val{\gamma(\alpha/p)}_{V}$, which proves that $(\mathbb{A}, V)\models \beta(\alpha/p)\leq\gamma(\alpha/p)$. Hence, by (b), we conclude that $(\mathbb{A}, V)\models \delta(\alpha/p)\leq \varepsilon(\alpha/p)$. Therefore,  since $\delta$ and $\varepsilon$ are respectively   positive and negative in $p$, the following chain of inequalities holds: $\val{\delta(p)}_{V}\leq \val{\delta(\alpha/p)}_{V} \leq \val{\varepsilon(\alpha/p)}_{V} \leq \val{\varepsilon(p)}_{V}$, which finishes the proof of (a).
\end{proof}
Notice that in the following, existential version,  $\beta$, $\varepsilon$, $\gamma$ and $\delta$ are assumed to have the opposite polarities they had in the universal version above.
\begin{lemma}[Left-handed existential Ackermann]
Let $\alpha, \beta(p), \gamma(p), \delta(p), \varepsilon(p)$ be formulas of a language $\mathcal{L}^+$ over the set of variables $\atprop$; let $p \in \atprop$ such that $p\notin FV(\alpha)$; if $\beta$ and $\varepsilon$ are positive in $p$ and $\gamma$ and $\delta$ are negative in $p$, then
 the following are equivalent for every $\mathcal{L}^+$-algebra  $\mathbb{A}$ and any $\mathcal{L}^+$-valuation $V$ on $\mathbb{A}$:

\begin{enumerate}
\item[(a)] $ (\mathbb{A}, V)\models(\beta(\alpha/p)\leq \gamma(\alpha/p)\ \Rightarrow\ \delta(\alpha/p)\leq \varepsilon(\alpha/p))$;

\item[(b)] %there exists some $p$-variant $V^\ast$ of $V$ such that 
$(\mathbb{A}, V^\ast)\models p\leq \alpha$ and %\begin{center}
$ (\mathbb{A}, V^\ast)\models (\beta(p)\leq\gamma(p)\ \Rightarrow\ \delta(p)\leq \varepsilon(p))$ for some $p$-variant $V^\ast$ of $V$. %\end{center}
\end{enumerate}
\end{lemma}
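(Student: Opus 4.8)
The plan is to mirror the proof of the Right-handed existential Ackermann lemma, adjusting for the facts that here the side condition is $p \leq \alpha$ (rather than $\alpha \leq p$) and that the polarities of $\beta, \gamma, \delta, \varepsilon$ in $p$ are flipped. As in all the preceding Ackermann lemmas, the two facts driving the argument are: (i) since $p \notin FV(\alpha)$, the value $\val{\alpha}$ does not depend on the value assigned to $p$, so $\val{\alpha}_{V^\ast} = \val{\alpha}_V$ for every $p$-variant $V^\ast$ of $V$; and (ii) the substitution-and-evaluation identity $\val{\xi(p)}_{V^\ast} = \val{\xi}(V^\ast(p)) = \val{\xi(\alpha/p)}_V$, which holds for every formula $\xi$ whenever $V^\ast$ is the $p$-variant of $V$ with $V^\ast(p) := \val{\alpha}_V$.

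For the direction from (a) to (b), I would simply exhibit the witness. Take $V^\ast$ to be the $p$-variant of $V$ with $V^\ast(p) := \val{\alpha}_V$. By (i), $\val{\alpha}_{V^\ast} = \val{\alpha}_V = V^\ast(p)$, so $(\mathbb{A}, V^\ast) \models p \leq \alpha$ (in fact with equality). The remaining conjunct of (b) is then immediate from (ii): instantiating the identity at $\beta, \gamma, \delta, \varepsilon$ shows that the implication $\beta(p)\leq\gamma(p) \Rightarrow \delta(p)\leq\varepsilon(p)$ evaluated under $V^\ast$ has exactly the same truth value as the implication in (a) evaluated under $V$, which holds by assumption. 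Note that this direction uses neither the polarity hypotheses nor the side condition in an essential way; it only records the witness.

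The content of the lemma lies in the converse, (b) to (a), which is the monotonicity argument. Here $V^\ast$ is an arbitrary $p$-variant satisfying (b), so by (i) I only have the one-sided inequality $V^\ast(p) \leq \val{\alpha}_{V^\ast} = \val{\alpha}_V$. Assuming the antecedent $(\mathbb{A}, V)\models \beta(\alpha/p)\leq\gamma(\alpha/p)$ of (a), I would first transport this inequality to $V^\ast$: since $\beta$ is positive and $\gamma$ negative in $p$, monotonicity gives the chain $\val{\beta(p)}_{V^\ast} \leq \val{\beta(\alpha/p)}_V \leq \val{\gamma(\alpha/p)}_V \leq \val{\gamma(p)}_{V^\ast}$, so that $(\mathbb{A}, V^\ast)\models \beta(p)\leq\gamma(p)$. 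Applying the implication in (b) yields $(\mathbb{A}, V^\ast)\models \delta(p)\leq\varepsilon(p)$. Finally, since $\delta$ is negative and $\varepsilon$ positive in $p$, I would pull this back to $V$ along the same one-sided inequality, obtaining $\val{\delta(\alpha/p)}_V \leq \val{\delta(p)}_{V^\ast} \leq \val{\varepsilon(p)}_{V^\ast} \leq \val{\varepsilon(\alpha/p)}_V$, which is (a).

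The step most prone to error---and the only real ``obstacle''---is the bookkeeping in this converse direction: one must use exactly the direction of the side inequality $V^\ast(p) \leq \val{\alpha}_V$ licensed by $p\leq\alpha$, and pair it with the correct monotonicity/antitonicity of each of $\beta, \gamma, \delta, \varepsilon$, all while keeping straight whether a subformula is evaluated under $V$ or $V^\ast$ (the two agree on $\val{\alpha}$ and on any formula with no free occurrence of $p$). Reversing any one of these signs flips an inequality in the chain and breaks the argument; otherwise the proof is entirely routine and parallel to the right-handed existential case.
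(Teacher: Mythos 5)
Your proposal is correct and follows essentially the same route as the paper's proof: the same canonical witness $V^\ast(p) := \val{\alpha}_V$ with the substitution-evaluation identity for (a)$\Rightarrow$(b), and for (b)$\Rightarrow$(a) the same two monotonicity chains $\val{\beta(p)}_{V^\ast} \leq \val{\beta(\alpha/p)}_{V}\leq \val{\gamma(\alpha/p)}_{V}\leq \val{\gamma(p)}_{V^\ast}$ and $\val{\delta(\alpha/p)}_{V}\leq \val{\delta(p)}_{V^\ast} \leq \val{\varepsilon(p)}_{V^\ast} \leq \val{\varepsilon(\alpha/p)}_{V}$, driven by the one-sided inequality $V^\ast(p)\leq \val{\alpha}_{V^\ast} = \val{\alpha}_V$. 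Your sign bookkeeping matches the paper's exactly, so nothing needs to be changed.
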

\begin{proof}
For the direction from (a) to (b), let $\mathbb{A}$ and $V$ be as above; let $V^\ast$ be the $p$-variant of $V$ such that $V^\ast(p): = \val{\alpha}_{V}$. Since the variable $p$ does not occur in $\alpha$, this implies that  $\val{\alpha}_{V^\ast} = \val{\alpha}_{V} = V^\ast(p)$, which proves that $(\mathbb{A}, V^\ast)\models p\leq \alpha$. The second part of the statement immediately follows from the observation that, for every formula $\xi$, the following chain of equalities holds:  $\val{\xi(p)}_{V^\ast} = \val{\xi}(V^\ast(p)) = \val{\xi}(\val{\alpha}_V) = \val{\xi(\alpha/p)}_{V}$.\\
Conversely, let $\mathbb{A}$ and $V$ be as above, and assume that $(\mathbb{A}, V)\models \beta(\alpha/p)\leq \gamma(\alpha/p)$. Since $V^\ast(p)\leq \val{\alpha}_{V^\ast} = \val{\alpha}_{V} $, and  $\beta$ and $\gamma$ are respectively  positive and negative in $p$, this implies that  $\val{\beta(p)}_{V^\ast} \leq \val{\beta(\alpha/p)}_{V}\leq \val{\gamma(\alpha/p)}_{V}\leq \val{\gamma(p)}_{V^\ast}$, which proves that $(\mathbb{A}, V^\ast)\models \beta(p)\leq\gamma(p)$. Hence, by (b), we conclude that $(\mathbb{A}, V^\ast)\models \delta(p)\leq \varepsilon(p)$. Therefore, since $\delta$ and $\varepsilon$ are respectively  negative and positive in $p$, the following chain of inequalities holds: $\val{\delta(\alpha/p)}_{V}\leq \val{\delta(p)}_{V^\ast} \leq \val{\varepsilon(p)}_{V^\ast} \leq \val{\varepsilon(\alpha/p)}_{V}$, which finishes the proof of (a).
\end{proof}

\subsubsection{First approximation as restricted Ackermann}
\label{ssec:fapproxrestrackermann}
A key role in the proof of the canonicity of inductive inequalities is played by a version of the Ackermann lemma where the existence of the minimal valuation is subject to the additional requirement that the minimal valuation in question be  {\em admissible}, rather than arbitrary. It is precisely to satisfy this additional requirement on the minimal valuation that the topological and order-theoretic properties of perfect algebras come into play. Similar considerations apply to the proof of the soundness of the first approximation rule, which can be framed as a case of `restricted Ackermann' argument, in which the minimal valuation is subject to the additional requirement that  nominal (resp.~conominal) variables are to be interpreted as complete join-generators (resp.~meet-generators) of the algebra, rather than as arbitrary elements. Also in this case, we need to appeal to additional order-theoretic properties of the term-functions, beyond monotonicity/antitonicity.
\begin{lemma}[Universal Ackermann with nominals and conominals]
\label{lemma:Universal_Ackermann_with_nominals_and_conominals}
Let $\mathcal{L}$ be an LE-language, and  $(\varphi\leq \psi) [\overline{\rho}/!\overline{z}, \overline{\lambda}/!\overline{w}] $  be an $\mathcal{L}^+$-inequality. If $+\varphi(!\overline{z}, !\overline{w})$ and $-\psi(!\overline{w}, !\overline{z})$ are Skeleton formulas, and $+z\prec +\varphi$ and $+z\prec -\psi$ and $-w\prec +\varphi$ and $-w\prec -\psi$, then
 the following are equivalent for every $\mathcal{L}^+$-algebra  $\mathbb{A}$:

\begin{enumerate}
\item[(a)] $ \mathbb{A}\models (\varphi\leq \psi) [\overline{\rho}/!\overline{z}, \overline{\lambda}/!\overline{w}] $;

\item[(b)] 
$ \mathbb{A}\models \forall\overline{\nomj}\forall\overline{\cnomm}[ (\overline{\nomj\leq \rho}\ \&\ \overline{\lambda\leq\cnomm}) \Rightarrow\ (\varphi\leq \psi) [!\overline{\nomj}/!\overline{z}, !\overline{\cnomm}/!\overline{w}]]$, where all $\nomj$ in $\overline{\nomj}$ and $\cnomm$ in $\overline{\cnomm}$ are fresh.
\end{enumerate}
\end{lemma}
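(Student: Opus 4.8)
The plan is to read (b) as a simultaneous first-approximation of (a), and to prove the equivalence by separating the contribution of plain monotonicity from that of complete distributivity. I would fix an arbitrary $\mathcal{L}^+$-valuation $V$ on the perfect algebra $\mathbb{A}$ and argue pointwise, writing $\varphi[\overline{\rho},\overline{\lambda}]$ for the element of $\mathbb{A}$ obtained under $V$. The first thing I would exploit is that $\varphi\leq\psi$ is scattered: the placeholders occurring in $\varphi$ are then disjoint from those occurring in $\psi$, so I can split $\overline z=\overline z^{\varphi}\cup\overline z^{\psi}$ and $\overline w=\overline w^{\varphi}\cup\overline w^{\psi}$ according to the side on which each variable sits. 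Unwinding the sign hypotheses $+z\prec+\varphi$, $+z\prec-\psi$, $-w\prec+\varphi$, $-w\prec-\psi$, this says that $\varphi$ is monotone in $\overline z^{\varphi}$ and antitone in $\overline w^{\varphi}$, while $\psi$ is antitone in $\overline z^{\psi}$ and monotone in $\overline w^{\psi}$.

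The technical heart of the argument is an order-theoretic fact about Skeleton formulas in perfect algebras, which I would isolate as a preliminary claim: \emph{a positive Skeleton $+\varphi$ is completely join-preserving at each of its positive leaves and turns a meet taken at any negative leaf into a join, while a negative Skeleton $-\psi$ is completely meet-preserving at each of its (monotone) leaves and turns a join taken at any (antitone) leaf into a meet.} This I would prove by induction on term complexity: every Skeleton node is interpreted by an operation with the corresponding complete (co)additivity --- for $\vee$ and for $f\in\mathcal{F}^*$ by complete join-preservation and normality of the perfect DLE, and for $\wedge$ by the complete distributivity of the perfect lattice --- and these properties compose along the branches, the sign flips being absorbed by passing between the positive and negative halves of the claim. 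Combining this with complete join/meet-generation, i.e.\ $\val{\rho_i}=\bigvee\{\nomj\in\jty(\mathbb{A})\mid \nomj\leq\val{\rho_i}\}$ and $\val{\lambda_k}=\bigwedge\{\cnomm\in\mty(\mathbb{A})\mid \val{\lambda_k}\leq\cnomm\}$, and pushing these decompositions through $\varphi$ and $\psi$, I would obtain the two identities
\[
\varphi[\overline{\rho},\overline{\lambda}]=\bigvee\nolimits_{A}\varphi[A]
\qquad\text{and}\qquad
\psi[\overline{\rho},\overline{\lambda}]=\bigwedge\nolimits_{B}\psi[B]
\]
where $A$ ranges over the assignments of a nominal $\nomj_i\leq\val{\rho_i}$ to each $z_i\in\overline z^{\varphi}$ and a conominal $\cnomm_k\geq\val{\lambda_k}$ to each $w_k\in\overline w^{\varphi}$, and $B$ ranges over the analogous assignments to the placeholders of $\psi$.

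The equivalence would then be immediate. Because the placeholders of $\varphi$ and of $\psi$ are disjoint, the single quantifier block in (b) factors into independent quantification over $A$ and over $B$, so (b) asserts exactly that $\varphi[A]\leq\psi[B]$ for all admissible $A$ and $B$. On the other hand, (a) rewrites through the two displayed identities as $\bigvee_{A}\varphi[A]\leq\bigwedge_{B}\psi[B]$, which by the universal properties of join and meet is equivalent to $\varphi[A]\leq\psi[B]$ for all $A$ and $B$. I expect the direction (a)$\Rightarrow$(b) to be essentially free, since for admissible $A,B$ monotonicity alone already gives $\varphi[A]\leq\varphi[\overline{\rho},\overline{\lambda}]$ and $\psi[\overline{\rho},\overline{\lambda}]\leq\psi[B]$; the real content is (b)$\Rightarrow$(a). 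Accordingly, the main obstacle is the preliminary claim: one must check that Skeleton formulas act as \emph{complete}, not merely finitary, operators, so that the minimal-valuation decomposition of $\overline\rho$ and $\overline\lambda$ is preserved when pushed through $\varphi$ and $\psi$ --- and this is exactly the point at which perfectness of $\mathbb{A}$ (complete distributivity together with complete join/meet preservation of the $\mathcal{L}^*$-connectives) is used in an essential way.
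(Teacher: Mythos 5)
Your proposal is correct and follows essentially the same route as the paper's proof: the easy direction (a)$\Rightarrow$(b) by plain monotonicity, and the hard direction via the decomposition identities $\val{\varphi[\overline{\rho}/!\overline{z},\overline{\lambda}/!\overline{w}]}=\bigvee\{\val{\varphi}(\overline{j},\overline{m})\mid \overline{j\leq\val{\rho}},\ \overline{\val{\lambda}\leq m}\}$ and its dual for $\psi$, together with scatteredness to decouple the choices for $\varphi$ and $\psi$. The only differences are presentational: you make explicit (as an induction on Skeleton structure, correctly locating complete distributivity at the $+\wedge$/$-\vee$ nodes and complete additivity of the $\mathcal{F}^*$/$\mathcal{G}^*$ operations elsewhere) what the paper asserts as following from ``the assumptions on $\varphi$ and $\psi$,'' and you conclude by the universal property of joins and meets where the paper argues contrapositively through an explicit variant valuation $V^\ast$.
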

\begin{proof}
For the direction from (a) to (b), let $V$ be an $\mathcal{L}^+$-valuation on $\mathbb{A}$ s.t.~$( \mathbb{A}, V)\models \overline{\nomj\leq \rho}$ and $( \mathbb{A}, V)\models  \overline{\lambda\leq\cnomm}$. Since $\varphi$ (resp.~$\psi$) is positive (resp.~negative) in each variable $z$ in $\overline{z}$ and is negative (resp.~positive) in each variable $w$ in $\overline{w}$, the following chain of inequalities holds: 
\[\val{\varphi [!\overline{\nomj}/!\overline{z}, !\overline{\cnomm}/!\overline{w}]}_V\leq \val{\varphi [\overline{\rho}/!\overline{z}, \overline{\lambda}/!\overline{w}] }_V\leq \val{\psi [\overline{\rho}/!\overline{z}, \overline{\lambda}/!\overline{w}] }_V\leq \val{\psi [!\overline{\nomj}/!\overline{z}, !\overline{\cnomm}/!\overline{w}]}_V,
\]
as required. \\
Conversely, 
 let us argue contrapositively, and let  $V$ be an $\mathcal{L}^+$-valuation on $\mathbb{A}$ s.t. \[( \mathbb{A}, V)\not \models (\varphi\leq \psi) [\overline{\rho}/!\overline{z}, \overline{\lambda}/!\overline{w}], \] that is, $\val{\varphi[\overline{\rho}/!\overline{z}, \overline{\lambda}/!\overline{w}] }_V\nleq\val{\psi[\overline{\rho}/!\overline{z}, \overline{\lambda}/!\overline{w}] }_V$;
the assumptions on $\varphi$ and $\psi$ imply that
\[\val{\varphi[\overline{\rho}/!\overline{z}, \overline{\lambda}/!\overline{w}]}_V = \bigvee\{\val{\varphi} (\overline{j}/!\overline{z}, \overline{m}/!\overline{w})\mid  \overline{j\leq \val{\rho}_V}\ \&\ \overline{\val{\lambda}_V\leq m} \}\]
\[\val{\psi[\overline{\rho}/!\overline{z}, \overline{\lambda}/!\overline{w}]}_V = \bigwedge\{\val{\psi} (\overline{j}/!\overline{z}, \overline{m}/!\overline{w})\mid \overline{j\leq \val{\rho}_V}\ \&\ \overline{\val{\lambda}_V\leq m}\},\]
where each $j$ in $\overline{j}$ (resp.~$m$ in $\overline{m}$) belongs to the subset which completely join-generates (resp.~meet-generates) $\bba$, and e.g.~$\val{\varphi} (\overline{j}/!\overline{z}, \overline{m}/!\overline{w})$ denotes the element of $\bba$ obtained by applying the  term function $\val{\varphi (!\overline{z}, !\overline{w})}$ to $(\overline{j}, \overline{m})$.
Hence, $\val{\varphi[\overline{\rho}/!\overline{z}, \overline{\lambda}/!\overline{w}]}_V \nleq\val{\psi[\overline{\rho}/!\overline{z}, \overline{\lambda}/!\overline{w}] }_V$
 can  equivalently be rewritten as follows: %for any $\mathcal{L}^+$-valuation $V$ on $\mathbb{A}$,
\[\bigvee\{\val{\varphi} (\overline{j}/!\overline{z}, \overline{m}/!\overline{w})\mid \overline{j\leq \val{\rho}_V}\ \&\ \overline{\val{\lambda}_V\leq m} \}
\nleq  \bigwedge\{\val{\psi} (\overline{j}/!\overline{z}, \overline{m}/!\overline{w})\mid \overline{j\leq \val{\rho}_V}\ \&\ \overline{\val{\lambda}_V\leq m}\}.\]
By the definition of suprema and infima, and noticing that $\varphi(!\overline{z}, !\overline{w})$ and $\psi(!\overline{z}, !\overline{w})$ have disjoint sets of variables, given that each $z$ in $\overline{z}$ and $w$ in $\overline{w}$ can occur only once in $(\varphi\leq \psi) [!\overline{z}, !\overline{w}]$, this is equivalent to
\[\val{\varphi} (\overline{j}/!\overline{z}, \overline{m}/!\overline{w})\nleq \val{\psi} (\overline{j}/!\overline{z}, \overline{m}/!\overline{w})\]
 for some $\overline{j}$ and $\overline{m}$ such that $\overline{j\leq \val{\rho}_V}$ and $\overline{\val{\lambda}_V\leq m}$.
 Let $V^\ast$ be the $(\overline{\nomj}, \overline{\cnomm})$-variant of $V$ such that $\overline{V^\ast(\nomj) = j}$ and $\overline{V^\ast(\cnomm) = m}$. Then, since no (co)nominal variable in $\overline{\nomj}$ and $\overline{\cnomm}$ occurs in  any $\rho$ and $\lambda$, it follows that $\val{\varphi}_{V^\ast} = \val{\varphi}_{V} $ and $\val{\psi}_{V^\ast} = \val{\psi}_{V} $, and hence $(\bba, V^\ast)\models \overline{\nomj\leq \rho}$ and $(\bba, V^\ast)\models\overline{\lambda\leq\cnomm}$. However, $(\bba, V^\ast)\not \models (\varphi\leq \psi) [!\overline{\nomj}/!\overline{z}, !\overline{\cnomm}/!\overline{w}]$, as required.
\end{proof}
Instantiating $\overline{\rho}: = \overline{\top}$ and $\overline{\lambda}: = \overline{\bot}$ in the lemma above we get the following 
\begin{cor}[Eliminating  single occurrences of nominals and conominals]
Let $(\varphi\leq \psi) [!\overline{z}, !\overline{w}] $  be an $\mathcal{L}^+$-inequality. If $+\varphi(!\overline{z}, !\overline{w})$ and $-\psi(!\overline{w}, !\overline{z})$ are Skeleton formulas, and $+z\prec +\varphi$ and $+z\prec -\psi$ and $-w\prec +\varphi$ and $-w\prec -\psi$, then
 the following are equivalent for every $\mathcal{L}^+$-algebra  $\mathbb{A}$:

\begin{enumerate}
\item[(a)] $ \mathbb{A}\models (\varphi\leq \psi) [\overline{\top}/!\overline{z}, \overline{\bot}/!\overline{w}] $;

\item[(b)] 
$ \mathbb{A}\models \forall\overline{\nomj}\forall\overline{\cnomm}[ (\overline{\nomj\leq \top}\ \&\ \overline{\bot\leq\cnomm}) \Rightarrow (\varphi\leq \psi) [!\overline{\nomj}/!\overline{z}, !\overline{\cnomm}/!\overline{w}]]$, where all $\nomj$ in $\overline{\nomj}$ and $\cnomm$ in $\overline{\cnomm}$ are fresh.
\item[(c)] 
$ \mathbb{A}\models \forall\overline{\nomj}\forall\overline{\cnomm}( (\varphi\leq \psi) [!\overline{\nomj}/!\overline{z}, !\overline{\cnomm}/!\overline{w}])$, where all $\nomj$ in $\overline{\nomj}$ and $\cnomm$ in $\overline{\cnomm}$ are fresh.
\end{enumerate}
\end{cor}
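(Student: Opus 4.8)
The plan is to establish the two equivalences $(a)\Leftrightarrow(b)$ and $(b)\Leftrightarrow(c)$ separately, of which only the second carries genuine content beyond the preceding lemma.

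For $(a)\Leftrightarrow(b)$, I would simply invoke Lemma~\ref{lemma:Universal_Ackermann_with_nominals_and_conominals} under the substitution $\overline{\rho}:=\overline{\top}$ and $\overline{\lambda}:=\overline{\bot}$. First I would verify that the hypotheses of that lemma hold verbatim for the present instance: the side conditions of the corollary — that $+\varphi(!\overline z,!\overline w)$ and $-\psi(!\overline w,!\overline z)$ are Skeleton formulas and that $+z\prec+\varphi$, $+z\prec-\psi$, $-w\prec+\varphi$, and $-w\prec-\psi$ — are exactly the hypotheses of the lemma, and the lattice constants $\overline{\top},\overline{\bot}$ are legitimate choices for the parameter formulas $\overline{\rho},\overline{\lambda}$. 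Under this substitution, statement (a) of the lemma becomes $\mathbb{A}\models(\varphi\leq\psi)[\overline{\top}/!\overline z,\overline{\bot}/!\overline w]$, i.e.\ our (a), and statement (b) of the lemma becomes $\mathbb{A}\models\forall\overline{\nomj}\forall\overline{\cnomm}[(\overline{\nomj\leq\top}\ \&\ \overline{\bot\leq\cnomm})\Rightarrow(\varphi\leq\psi)[!\overline{\nomj}/!\overline z,!\overline{\cnomm}/!\overline w]]$, i.e.\ our (b). Hence $(a)\Leftrightarrow(b)$ is immediate.

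For $(b)\Leftrightarrow(c)$, the key observation is that the antecedent of the meta-implication in (b) is satisfied under every valuation on every $\mathcal{L}^+$-algebra. Indeed, in any bounded lattice $\top$ is the greatest element and $\bot$ is the least, so for any interpretation of the fresh nominals and conominals one has $\nomj\leq\top$ for each $\nomj$ in $\overline{\nomj}$ and $\bot\leq\cnomm$ for each $\cnomm$ in $\overline{\cnomm}$; thus the conjunction $\overline{\nomj\leq\top}\ \&\ \overline{\bot\leq\cnomm}$ holds vacuously. Since a meta-implication $\Rightarrow$ whose antecedent is always true is equivalent to its consequent, the quantified body of (b) is equivalent, under each assignment of $\overline{\nomj},\overline{\cnomm}$, to the bare consequent $(\varphi\leq\psi)[!\overline{\nomj}/!\overline z,!\overline{\cnomm}/!\overline w]$. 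Prefixing the universal quantifiers then yields exactly (c).

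Since both steps are elementary, I do not expect any real obstacle: the only points requiring care are confirming that the side conditions of the earlier lemma match those of the corollary so that the instantiation is legitimate, and that the bounds $\top,\bot$ genuinely discharge the restricting inequalities of (b) — both of which hold by the definition of the greatest and least elements of a bounded distributive lattice.
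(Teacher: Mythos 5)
Your proposal is correct and matches the paper's own (implicit) argument exactly: the paper derives this corollary simply by instantiating $\overline{\rho}:=\overline{\top}$ and $\overline{\lambda}:=\overline{\bot}$ in Lemma~\ref{lemma:Universal_Ackermann_with_nominals_and_conominals}, leaving the trivial $(b)\Leftrightarrow(c)$ step unstated. Your additional check that the restricting inequalities $\nomj\leq\top$ and $\bot\leq\cnomm$ hold under every valuation, so the antecedent can be discharged, is precisely the omitted detail and is handled correctly.
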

Instantiating $\varphi: = z$ and $\psi: = w$ in the lemma above we get the following 
\begin{cor}[Approximating from  below (resp.~above) with nominals (resp.~conominals)]
\label{cor:firstapproxnomcnom}
 The following are equivalent for every $\mathcal{L}^+$-algebra  $\mathbb{A}$ and all $\mathcal{L}^+$-formulas $\rho$ and $\lambda$:

\begin{enumerate}
\item[(a)] $ \mathbb{A}\models \rho\leq \lambda$;

\item[(b)] 
$ \mathbb{A}\models \forall\nomj\forall \cnomm[( \nomj\leq \rho\ \&\ \lambda\leq\cnomm) \Rightarrow \nomj\leq \cnomm]$, where  $\nomj$ and $\cnomm$ are fresh.

\item[(c)] 
$ \mathbb{A}\models \forall\nomj( \nomj\leq \rho \Rightarrow \nomj\leq \lambda)$, where  $\nomj$ is fresh.
\item[(d)] 
$ \mathbb{A}\models \forall \cnomm( \lambda\leq\cnomm \Rightarrow \rho\leq \cnomm)$, where   $\cnomm$ is fresh.
\end{enumerate}
\end{cor}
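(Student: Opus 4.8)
The plan is to read the equivalence $(a)\Leftrightarrow(b)$ directly off Lemma~\ref{lemma:Universal_Ackermann_with_nominals_and_conominals}, specialised to trivial Skeleton formulas, and then to dispatch $(c)$ and $(d)$ by short direct arguments resting on the complete join-/meet-generation of perfect algebras.

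First I would check that the instantiation $\varphi := z$ and $\psi := w$ (so that $\overline z$ and $\overline w$ are the singletons $(z)$ and $(w)$) satisfies the hypotheses of Lemma~\ref{lemma:Universal_Ackermann_with_nominals_and_conominals}: a single variable is vacuously a Skeleton formula, so both $+\varphi = +z$ and $-\psi = -w$ qualify; the sign requirements $+z\prec+\varphi$ and $-w\prec-\psi$ hold by inspection, whereas $+z\prec-\psi$ and $-w\prec+\varphi$ hold vacuously, since $z$ does not occur in $\psi$ and $w$ does not occur in $\varphi$. Under this substitution, clause~(a) of the lemma becomes $\mathbb{A}\models(z\leq w)[\rho/z,\lambda/w]$, that is $\mathbb{A}\models\rho\leq\lambda$, and clause~(b) becomes $\mathbb{A}\models\forall\nomj\,\forall\cnomm\,[(\nomj\leq\rho\ \&\ \lambda\leq\cnomm)\Rightarrow\nomj\leq\cnomm]$; these are precisely clauses~(a) and~(b) of the present statement, so $(a)\Leftrightarrow(b)$ is immediate.

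Next I would establish $(a)\Leftrightarrow(c)$. The direction $(a)\Rightarrow(c)$ is mere monotonicity: since $\nomj$ is fresh it occurs in neither $\rho$ nor $\lambda$, so any valuation witnessing $\nomj\leq\rho$ also yields $\nomj\leq\rho\leq\lambda$. For $(c)\Rightarrow(a)$, fixing a valuation $V$, I would use that in a perfect algebra $\val{\rho}_V$ is the join of the completely join-irreducible elements below it; each such element $j$ belongs to $\jty$, hence is a value the nominal $\nomj$ may take, and satisfies $\nomj\leq\rho$ under the corresponding $\nomj$-variant of $V$, so clause~(c) forces $j\leq\val{\lambda}_V$, and taking the join over all such $j$ gives $\val{\rho}_V\leq\val{\lambda}_V$. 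The equivalence $(a)\Leftrightarrow(d)$ is then obtained order-dually, replacing join-generation by the completely join-irreducibles below $\rho$ with meet-generation by the completely meet-irreducibles above $\lambda$.

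The only step that is neither a one-line monotonicity observation nor a literal substitution into Lemma~\ref{lemma:Universal_Ackermann_with_nominals_and_conominals} is this appeal to the structure of perfect algebras in $(c)\Rightarrow(a)$ and $(d)\Rightarrow(a)$, where one invokes that nominals and conominals are interpreted exactly over the completely join-irreducible and completely meet-irreducible elements, which respectively join-generate and meet-generate the algebra. I expect this to be the main, and essentially the sole, conceptual point of the proof; everything else is mechanical.
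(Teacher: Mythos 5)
Your proposal is correct and follows essentially the same route as the paper, which obtains the corollary simply by the instantiation $\varphi := z$, $\psi := w$ in Lemma~\ref{lemma:Universal_Ackermann_with_nominals_and_conominals}. Your explicit arguments for $(c)$ and $(d)$ via complete join-generation by $\jty$ and meet-generation by $\mty$ merely spell out what the paper leaves implicit in that one-line derivation, and they are exactly the right justification.
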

In the next lemma, we are crucially making use of the environment of perfect DLEs, and hence that nominals and conominals are interpreted as completely join- and meet-{\em prime} elements.
\begin{lemma}[Existential Ackermann with nominals and conominals in the distributive setting]
\label{lemma:Existential_Ackermann_with_nominals_and_conominals}
Let $\mathcal{L}$ be a DLE-language. 
\begin{enumerate}
\item If $+\varphi(!\overline{z}, !\overline{w})$ is a definite Skeleton $\mathcal{L}^+$-formula, and $+z\prec +\varphi$ and $-w\prec +\varphi$, then
 the following are equivalent for every perfect $\mathcal{L}^+$-algebra  $\mathbb{A}$ and all $\mathcal{L}^+$-formulas $\rho$ in $\overline{\rho}$  and $\lambda$  in $\overline{\lambda}$:

\begin{enumerate}
\item[(a)] $ \mathbb{A}\models \nomi\leq \varphi [\overline{\rho}/!\overline{z}, \overline{\lambda}/!\overline{w}] $;

\item[(b)] 
$ \mathbb{A}\models \exists\overline{\nomj}\exists\overline{\cnomm}( \overline{\nomj\leq \rho}\ \&\ \overline{\lambda\leq\cnomm}\ \&\ \nomi\leq \varphi [!\overline{\nomj}/!\overline{z}, !\overline{\cnomm}/!\overline{w}])$, where all $\nomj$ in $\overline{\nomj}$ and $\cnomm$ in $\overline{\cnomm}$ are fresh.
\end{enumerate}
\item If  $-\psi(!\overline{w}, !\overline{z})$ is a definite Skeleton $\mathcal{L}^+$-formula, and $+z\prec -\psi$ and $-w\prec -\psi$, then
 the following are equivalent for every perfect $\mathcal{L}^+$-algebra  $\mathbb{A}$ and all $\mathcal{L}^+$-formulas $\rho$ in $\overline{\rho}$  and $\lambda$  in $\overline{\lambda}$:

\begin{enumerate}
\item[(a)] $ \mathbb{A}\models \psi [\overline{\rho}/!\overline{z}, \overline{\lambda}/!\overline{w}] \leq \cnomn$;

\item[(b)] 
$ \mathbb{A}\models \exists\overline{\nomj}\exists\overline{\cnomm}( \overline{\nomj\leq \rho}\ \&\ \overline{\lambda\leq\cnomm}\ \&\ \psi [!\overline{\nomj}/!\overline{z}, !\overline{\cnomm}/!\overline{w}]\leq \cnomn)$, where all $\nomj$ in $\overline{\nomj}$ and $\cnomm$ in $\overline{\cnomm}$ are fresh.
\end{enumerate}
\end{enumerate}
\end{lemma}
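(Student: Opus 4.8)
The plan is to fix a perfect $\mathcal{L}^+$-algebra $\bba$ together with an arbitrary valuation and to argue the equivalence pointwise, since in (b) the (co)nominals $\overline{\nomj},\overline{\cnomm}$ are existentially bound while $\nomi$ and the parameters $\rho,\lambda$ are held fixed. I would prove Part 1 in full and obtain Part 2 by order-duality, reading $-\psi$ over $\bba$ as a definite positive Skeleton tree over $\bba^\partial$: this turns $\cnomn$ into a completely join-prime element of $\bba^\partial$ and the inequality $\psi[\overline{\rho}/!\overline z,\overline{\lambda}/!\overline w]\leq\cnomn$ into an instance of the first statement.

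The direction (b)$\Rightarrow$(a) is immediate from monotonicity. Since $+z\prec+\varphi$ and $-w\prec+\varphi$, the term function $\val{\varphi}$ is monotone in each $z$-coordinate and antitone in each $w$-coordinate; hence from $\overline{\nomj\leq\rho}$ and $\overline{\lambda\leq\cnomm}$ we obtain $\val{\varphi[\overline{\nomj}/!\overline z,\overline{\cnomm}/!\overline w]}\leq\val{\varphi[\overline{\rho}/!\overline z,\overline{\lambda}/!\overline w]}$, and since $\nomj,\cnomm$ are fresh (so do not occur in $\rho,\lambda$ or affect $\nomi$) this yields $\val{\nomi}\leq\val{\varphi[\overline{\rho}/!\overline z,\overline{\lambda}/!\overline w]}$.

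The core of the argument is the structural fact that a \emph{definite} positive Skeleton formula denotes a completely join-preserving map on a perfect algebra. Concretely, I would first establish by induction on $\varphi$ that
\[
\val{\varphi[\overline{\rho}/!\overline z,\overline{\lambda}/!\overline w]}=\bigvee\{\val{\varphi}(\overline j/!\overline z,\overline m/!\overline w)\mid \overline{j\leq\val{\rho}},\ \overline{\val{\lambda}\leq m},\ j\in\jty(\bba),\ m\in\mty(\bba)\}.
\]
The base case is a displayed variable, where the identity reduces to complete join-generation of $\val{\rho}$ by $\jty(\bba)$ (resp.\ complete meet-generation of $\val{\lambda}$ by $\mty(\bba)$). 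The inductive steps treat exactly the definite positive Skeleton nodes $\{+\vee,-\wedge,+f,-g\}$ — definiteness is precisely what rules out $+\wedge$ and $-\vee$ — and each such node is, in the signed sense, completely join-preserving: $+\vee$ and $-\wedge$ by complete distributivity of the perfect distributive lattice, and $+f,-g$ by the normality of $f,g$ in the perfect DLE $\bba$ (complete join-preservation in positive coordinates, meet-reversal in negative ones). Granting this identity, (a)$\Rightarrow$(b) is quick: $\val{\nomi}\leq\val{\varphi[\overline{\rho}/!\overline z,\overline{\lambda}/!\overline w]}$ places $\val{\nomi}$ below the displayed join, and since nominals are interpreted as completely join-prime elements of $\bba$, there is a single tuple $(\overline j,\overline m)$ from the index set with $\val{\nomi}\leq\val{\varphi}(\overline j/!\overline z,\overline m/!\overline w)$. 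The variant valuation sending the fresh $\overline{\nomj}$ to $\overline j$ and the fresh $\overline{\cnomm}$ to $\overline m$ then witnesses (b).

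I expect the main obstacle to be the clean formulation and proof of this complete-join-preservation identity. One must track signs carefully — the $w$-coordinates are antitone, so ``join-preservation'' there means that the meets $\val{\lambda}=\bigwedge\{m\mid\val{\lambda}\leq m\}$ are sent to joins — and one must justify passing from separate coordinatewise preservation to the single joint join ranging over all tuples $(\overline j,\overline m)$ simultaneously. It is exactly here, and in the final extraction of a witness via join-primeness of $\nomi$, that complete distributivity of $\bba$ and the interpretation of nominals as completely join-prime generators are indispensable; this is why the lemma lives in the distributive (DLE) setting and why the completely meet-prime $\cnomn$ plays the dual role in Part 2.
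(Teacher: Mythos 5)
Your proposal is correct and follows essentially the same route as the paper's proof: the paper likewise proves one item in full (item 2, with item 1 by order-duality), handles (b)$\Rightarrow$(a) by monotonicity (stated contrapositively), and for (a)$\Rightarrow$(b) asserts exactly your decomposition identity $\val{\psi[\overline{\rho}/!\overline{z}, \overline{\lambda}/!\overline{w}]}_V=\bigwedge\{\val{\psi}(\overline{j}/!\overline{z},\overline{m}/!\overline{w})\mid \overline{j\leq\val{\rho}_V}\ \&\ \overline{\val{\lambda}_V\leq m}\}$ and then extracts a single witness tuple via complete meet-primeness of $\val{\cnomn}_V$ (dually, complete join-primeness of $\val{\nomi}_V$), which you propose to spell out by induction where the paper leaves it implicit. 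One minor calibration: in the paper's usage (cf.\ Corollary \ref{lemma:flattening_skeleton_formulas}, where $\odot$ ranges over $\mathcal{F}^\ast\cup\{\wedge\}$) a definite positive Skeleton may still contain $+\wedge$, handled by splitting, so your induction should include that case --- which your appeal to complete distributivity of the perfect distributive lattice already covers.
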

\begin{proof}
Let us show item 2, the proof of item 1 being order-dual. For the direction from (b) to (a), assume contrapositively that $\val{\psi [\overline{\rho}/!\overline{z}, \overline{\lambda}/!\overline{w}] }_V \nleq \val{\cnomn}_V$ for some $\mathcal{L}^+$-valuation $V$ on $\mathbb{A}$. Since $\val{\psi(!\overline{z}, !\overline{w})}$ is monotone in every $w$ in $\overline{w}$ and antitone in every $z$ in $\overline{z}$, the assumptions imply that, for any  $\{\overline{\nomj}, \overline{\cnomm}\}$-variant  $V^*$  of $V$ such that $(\mathbb{A}, V^*)\models  \overline{\nomj\leq \rho}$ and $(\mathbb{A}, V^*)\models\overline{\lambda\leq\cnomm}$, it must be that $(\mathbb{A}, V^*)\not \models \psi [!\overline{\nomj}/!\overline{z}, !\overline{\cnomm}/!\overline{w}]\leq \cnomn$, since otherwise $\val{\psi [\overline{\rho}/!\overline{z}, \overline{\lambda}/!\overline{w}] }_V  = \val{\psi [\overline{\rho}/!\overline{z}, \overline{\lambda}/!\overline{w}] }_{V^*} \leq \val{\psi [\overline{\nomj}/!\overline{z}, \overline{\cnomm}/!\overline{w}] }_{V^*}\leq \val{\cnomn}_{V^*} =  \val{\cnomn}_V$, against the assumption. This shows that (b) fails under $V$, as required. Conversely, fix an $\mathcal{L}^+$-valuation $V$ on $\mathbb{A}$; the assumptions on  $\psi$ imply that
\[\val{\psi[\overline{\rho}/!\overline{z}, \overline{\lambda}/!\overline{w}]}_V = \bigwedge\{\val{\psi} (\overline{j}/!\overline{z}, \overline{m}/!\overline{w})\mid \overline{j\leq \val{\rho}_V}\ \&\ \overline{\val{\lambda}_V\leq m}\},\]
where each $j$ in $\overline{j}$ (resp.~$m$ in $\overline{m}$) is  completely join-prime (resp.~meet-prime) and hence belongs to the subset which completely join-generates (resp.~meet-generates) $\bba$, and e.g.~$\val{\varphi} (\overline{j}/!\overline{z}, \overline{m}/!\overline{w})$ denotes the element of $\bba$ obtained by applying the  term function $\val{\varphi (!\overline{z}, !\overline{w})}$ to $(\overline{j}, \overline{m})$.
Hence, $\val{\varphi[\overline{\rho}/!\overline{z}, \overline{\lambda}/!\overline{w}]}_V \leq\val{\cnomn }_V$
 can  equivalently be rewritten as follows: %for any $\mathcal{L}^+$-valuation $V$ on $\mathbb{A}$,
\[ \bigwedge\{\val{\psi} (\overline{j}/!\overline{z}, \overline{m}/!\overline{w})\mid \overline{j\leq \val{\rho}_V}\ \&\ \overline{\val{\lambda}_V\leq m}\}\leq \val{\cnomn }_V.\]
Since $\val{\cnomn }_V$ is completely meet-prime, the inequality above implies that 
$\val{\psi} (\overline{j^*}/!\overline{z}, \overline{m^*}/!\overline{w}) \leq \val{\cnomn }_V $ for some vectors $\overline{j^*}$ and $\overline{m^*}$ such that  $\overline{j^*\leq \val{\rho}_V} $ and $ \overline{\val{\lambda}_V\leq m^*}$.  Let $V^\ast$ be the $(\overline{\nomj}, \overline{\cnomm})$-variant of $V$ such that $\overline{V^\ast(\nomj) = j^*}$ and $\overline{V^\ast(\cnomm) = m^*}$. By construction, $(\mathbb{A}, V^*)\models  \overline{\nomj\leq \rho}$ and $(\mathbb{A}, V^*)\models\overline{\lambda\leq\cnomm}$, and $(\mathbb{A}, V^*)\models \psi [!\overline{\nomj}/!\overline{z}, !\overline{\cnomm}/!\overline{w}]\leq \cnomn$, which proves that (b) holds under $V$, as required.
\end{proof}

Notice that, if $f(!\overline{z}, !\overline{w})\in \mathcal{F}^\ast$ (resp.~$g(!\overline{z}, !\overline{w})\in \mathcal{G}^\ast$) is such that $\epsilon_f(z) = 1$ (resp.~$\epsilon_g(z) = \partial$) for each $z$-coordinate and  $\epsilon_f(w) = \partial$ (resp.~$\epsilon_g(z) = 1$)  for each $w$-coordinate, then if $+ f(\overline{\varphi}/!\overline{z}, \overline{\psi}/!\overline{w})$ or $+( \varphi_1\wedge \varphi_2)$ (resp.~$-g(\overline{\varphi}/!\overline{z}, \overline{\psi}/!\overline{w})$ or $-(\psi_1\vee\psi_2)$) is definite Skeleton, then every $\varphi$ in $\overline{\varphi}$ is positive definite Skeleton, and every $\psi$ in $\overline{\psi}$ is negative definite Skeleton. Hence, by repeated applications of Lemma \ref{lemma:Existential_Ackermann_with_nominals_and_conominals} we get the following
\begin{cor}
\label{lemma:flattening_skeleton_formulas}
Let $\mathcal{L}$ be a DLE-language. 
\begin{enumerate}
\item If $+\varphi = \odot(\overline{\varphi}/!\overline{z}, \overline{\psi}/!\overline{w})$, where $\odot\in \mathcal{F}^\ast\cup\{\wedge\}$ is a pure definite Skeleton $\mathcal{L}^+$-formula, and $\epsilon_\odot(z) = 1$ (resp.~$\epsilon_g(z) = \partial$) for each $z$-coordinate and  $\epsilon_\odot(w) = \partial$ (resp.~$\epsilon_g(z) = 1$)  for each $w$-coordinate, then  
 the following are equivalent for every perfect $\mathcal{L}^+$-algebra  $\mathbb{A}$: %and all $\mathcal{L}^+$-formulas $\rho$ in $\overline{\rho}$  and $\lambda$  in $\overline{\lambda}$:

\begin{enumerate}
\item[(a)] $ \mathbb{A}\models \nomi\leq \varphi$; % [\overline{\rho}/!\overline{z}, \overline{\lambda}/!\overline{w}] $;

\item[(b)] 
$ \mathbb{A}\models \exists\overline{\nomj}\exists\overline{\cnomm}( \overline{\nomj\leq \rho}\ \&\ \overline{\lambda\leq\cnomm}\ \&\ \nomi\leq \odot(\overline{\nomj}/!\overline{z}, \overline{\cnomm}/!\overline{w}))$, where all $\nomj$ in $\overline{\nomj}$ and $\cnomm$ in $\overline{\cnomm}$ are fresh and all inequalities are flat (modulo splitting if $\odot = \wedge$), and for every nominal $\nomj$ (resp.~conominal $\cnomm$) occurring in display in the left-hand (resp.~right-hand) side of an inequality, there exists a unique flat inequality relative to  which $\nomj$ (resp.~$\cnomm$) occurs in negative (resp.~positive) position in the scope of a $-f$-connective (resp.~$+g$-connective).
\end{enumerate}
\item If $-\psi = \odot(\overline{\varphi}/!\overline{z}, \overline{\psi}/!\overline{w})$, where $\odot\in \mathcal{G}^\ast\cup\{\vee\}$ is a pure definite Skeleton $\mathcal{L}^+$-formula, and $\epsilon_\odot(z) = \partial$  for each $z$-coordinate and  $\epsilon_\odot(w)  = 1$  for each $w$-coordinate, then  
 the following are equivalent for every perfect $\mathcal{L}^+$-algebra  $\mathbb{A}$: %and all $\mathcal{L}^+$-formulas $\rho$ in $\overline{\rho}$  and $\lambda$  in $\overline{\lambda}$:

\begin{enumerate}
\item[(a)] $ \mathbb{A}\models \psi\leq \cnomn$; % [\overline{\rho}/!\overline{z}, \overline{\lambda}/!\overline{w}] $;

\item[(b)] 
$ \mathbb{A}\models \exists\overline{\nomj}\exists\overline{\cnomm}( \overline{\nomj\leq \rho}\ \&\ \overline{\lambda\leq\cnomm}\ \&\  \odot(\overline{\nomj}/!\overline{z}, \overline{\cnomm}/!\overline{w}))\leq \cnomn$, where all $\nomj$ in $\overline{\nomj}$ and $\cnomm$ in $\overline{\cnomm}$ are fresh and all inequalities are flat (modulo splitting if $\odot = \vee$), and for every nominal $\nomj$ (resp.~conominal $\cnomm$) occurring in display in the left-hand (resp.~right-hand) side of an inequality, there exists a unique flat inequality relative to which $\nomj$ (resp.~$\cnomm$) occurs in negative (resp.~positive) position in the scope of a $-f$-connective (resp.~$+g$-connective).
\end{enumerate}
\end{enumerate}
\end{cor}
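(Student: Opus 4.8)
The plan is to derive Corollary \ref{lemma:flattening_skeleton_formulas} as an iterated application of Lemma \ref{lemma:Existential_Ackermann_with_nominals_and_conominals}, proceeding by induction on the complexity of the definite Skeleton formula $\varphi$ (resp.\ $\psi$). I would prove item 1 and obtain item 2 by order-duality, exactly as the preceding lemma does. The base case is when $\odot$ is the outermost connective applied directly to (co)nominals, where the statement is an immediate instance of Lemma \ref{lemma:Existential_Ackermann_with_nominals_and_conominals}(1); the inductive step peels off one Skeleton connective at a time.

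First I would observe, using the remark just before the corollary, that if $\odot\in\mathcal{F}^\ast\cup\{\wedge\}$ has order-type $1$ on the $\overline{z}$-coordinates and $\partial$ on the $\overline{w}$-coordinates and $+\varphi = \odot(\overline{\varphi}/!\overline{z},\overline{\psi}/!\overline{w})$ is definite Skeleton, then each argument $\varphi$ in $\overline{\varphi}$ is itself a positive definite Skeleton formula and each $\psi$ in $\overline{\psi}$ a negative definite Skeleton formula. This is precisely the closure property needed for the induction to go through: the residuals and Galois-adjoints in $\mathcal{F}^\ast$ preserve the Skeleton shape in the appropriate polarity. Applying Lemma \ref{lemma:Existential_Ackermann_with_nominals_and_conominals}(1) to the top connective $\odot$ replaces $\nomi\leq\odot(\overline{\varphi},\overline{\psi})$ by an existentially quantified conjunction $\exists\overline{\nomj}\,\exists\overline{\cnomm}(\overline{\nomj\leq\varphi}\ \metaand\ \overline{\psi\leq\cnomm}\ \metaand\ \nomi\leq\odot(\overline{\nomj}/!\overline{z},\overline{\cnomm}/!\overline{w}))$, where the last conjunct is now flat (or, if $\odot=\wedge$, splits into two flat inequalities between (co)nominals). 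The remaining conjuncts $\nomj\leq\varphi$ and $\psi\leq\cnomm$ have the form required to invoke the induction hypothesis — the former via item 1, the latter via item 2 — so I would recurse simultaneously on the two items, mirroring the simultaneous structure already present in Lemma \ref{lemma:Existential_Ackermann_with_nominals_and_conominals}.

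The bookkeeping claim — that each displayed nominal occurs in negative position within the scope of a unique $-f$-connective, and each displayed conominal in positive position within the scope of a unique $+g$-connective — I would verify by tracking how the order-type bookkeeping of the residual connectives dictates the sign with which each freshly introduced (co)nominal enters the flat inequalities. Concretely, when Lemma \ref{lemma:Existential_Ackermann_with_nominals_and_conominals} introduces $\nomj$ for a $z$-coordinate (order-type $1$) and $\cnomm$ for a $w$-coordinate (order-type $\partial$), the corresponding $f^\sharp_i$ or $g^\flat_i$ connective carries exactly the polarity flip recorded in the order-type table of Section \ref{ssec:expanded language}, so the uniqueness and sign assertions follow from the scatteredness of the Skeleton (each variable place is occupied once) together with the fact that the recursion introduces a disjoint batch of fresh (co)nominals at each node.

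The main obstacle I anticipate is not any single inequality manipulation but the \emph{uniformity} of the simultaneous induction across the $\mathcal{F}^\ast$/$\mathcal{G}^\ast$ case split: one must check that every connective in $\mathcal{F}^\ast\cup\{\wedge\}$ (including the residuals $\pdla,\pdra$ and the adjoints $f^\sharp_i$ of coordinates of original $\mathcal{G}$-connectives) genuinely falls under the hypotheses of Lemma \ref{lemma:Existential_Ackermann_with_nominals_and_conominals}(1) with the stated order-type pattern, and dually for $\mathcal{G}^\ast\cup\{\vee\}$ under item 2. This amounts to confirming that the definiteness and the $+z\prec+\varphi$, $-w\prec+\varphi$ sign conditions are inherited by every argument under every admissible top connective — a finite but delicate case analysis driven entirely by the order-type assignments fixed in Section \ref{ssec:expanded language}. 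Once that inheritance is established, the equivalence in (b) is assembled by conjoining the flat inequality produced at the root with the (already flattened, by induction) systems coming from the arguments, and threading the existential quantifiers over the fresh (co)nominals to the front.
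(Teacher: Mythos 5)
Your proposal is correct and takes essentially the same route as the paper: the paper's entire justification is the remark immediately preceding the corollary (that under a top connective $\odot\in\mathcal{F}^\ast\cup\{\wedge\}$ with the stated order-type pattern, every argument of a definite Skeleton formula is again a positive or negative definite Skeleton formula) followed by ``repeated applications of Lemma \ref{lemma:Existential_Ackermann_with_nominals_and_conominals}'', which is exactly your mutual induction peeling off one connective at a time and recursing between items 1 and 2. Your additional polarity/uniqueness bookkeeping (driven by the order-type of $\odot$, scatteredness, and freshness of the introduced (co)nominals) merely makes explicit what the paper leaves implicit, and the ``delicate case analysis'' you anticipate is in fact vacuous, since the $\overline{z}$/$\overline{w}$ split in the corollary's hypotheses is by convention the partition into positive and negative coordinates of $\odot$.
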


\subsubsection{ALBA output shape}
\begin{prop}
\label{prop:ALBA_output_in_ackermann}
Let $\mathcal{L}$ be an arbitrary DLE-language, % (resp.~LE-language), 
and \[(\varphi\leq \psi)[\overline{\alpha}/!\overline{x}, \overline{\beta}/!\overline{y},\overline{\gamma}/!\overline{z}, \overline{\delta}/!\overline{w}]\] be a definite  inductive $\mathcal{L}$-inequality with no uniform variables (and such that at least one vector among $\overline{\gamma}$ and $\overline{\delta}$ is nonempty, and all occurrences of $-\wedge$ and $+\vee$ in PIA formulas have $\epsilon$-critical branches on both coordinates). The output of ALBA is in Ackermann shape w.r.t.~the application of the reversed first approximation rule for the elimination of the nominal and conominal variables corresponding to the subformulas $\gamma$ in $\overline{\gamma}$ and $\delta$ in $\overline{\delta}$, and can hence be further transformed into a pure $\mathcal{L}^+$-inequality of the following form:
{{\begin{equation}
\label{eq: pure inequality:statement}
 \forall\overline{\nomj} \forall\overline{\cnomm}\left((\varphi\leq \psi)[!\overline{\nomj}/!\overline{x}, !\overline{\cnomm}/!\overline{y},\overline{\gamma}\left[\overline{\bigvee\mathsf{Mv}(p)}/\overline{p}, \overline{\bigwedge\mathsf{Mv}(q)}/\overline{q}\right]/!\overline{z}, \overline{\delta}\left [\overline{\bigvee\mathsf{Mv}(p)}/\overline{p}, \overline{\bigwedge\mathsf{Mv}(q)}/\overline{q}\right]/!\overline{w}] \right),\end{equation}
 }}
 which in its turn is the ALBA output of the following very simple Sahlqvist $\mathcal{L}^*$-inequality: 

 {{\begin{equation}
\label{eq: very simple Sahlqvist in reversive language}
\begin{array}{l}
 \forall\overline{p'} \forall\overline{q'}\Bigg((\varphi\leq \psi)\Big[!\overline{p'}/!\overline{x}, !\overline{q'}/!\overline{y},\overline{\gamma}\left[\overline{\bigvee\mathsf{Mv}(p)[\overline{p'}/\overline{\nomj}, \overline{q'}/\overline{\cnomm}]}/\overline{p}, \overline{\bigwedge\mathsf{Mv}(q)[\overline{p'}/\overline{\nomj}, \overline{q'}/\overline{\cnomm}]}/\overline{q}\right]/!\overline{z}, \\ \hfill\overline{\delta}\left [\overline{\bigvee\mathsf{Mv}(p)[\overline{p'}/\overline{\nomj}, \overline{q'}/\overline{\cnomm}]}/\overline{p}, \overline{\bigwedge\mathsf{Mv}(q)[\overline{p'}/\overline{\nomj}, \overline{q'}/\overline{\cnomm}]}/\overline{q}\right]/!\overline{w}\Big] \Bigg),
 \end{array}
 \end{equation}
 }}
 which is the inequality obtained by substituting $\overline{p'}$ for $\overline{\nomj}$ and $\overline{q'}$ for $\overline{\cnomm}$ in \eqref{eq: pure inequality:statement}.
\end{prop}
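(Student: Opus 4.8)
The plan is to run ALBA on the definite inductive inequality $(\varphi\leq\psi)[\overline{\alpha}/!\overline{x},\overline{\beta}/!\overline{y},\overline{\gamma}/!\overline{z},\overline{\delta}/!\overline{w}]$, tracking the shape of every intermediate quasi-inequality and exploiting the split between the two kinds of maximal PIA-subformulas: the formulas $\overline\alpha,\overline\beta$ sitting at the positions $\overline x,\overline y$ carry the $\epsilon$-critical variable occurrences and are the ones we residuate through in order to solve for those variables, whereas $\overline\gamma,\overline\delta$ at the positions $\overline z,\overline w$ contain only non-critical occurrences and merely receive minimal valuations at the end. First I would apply the first-approximation rule to the whole inequality and then repeatedly apply the flattening of skeleton formulas of Corollary \ref{lemma:flattening_skeleton_formulas} to the positive skeleton $+\varphi$ and the negative skeleton $-\psi$. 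This peels the skeleton, placing a fresh nominal below each positive PIA-position (those in $\overline x$ and $\overline z$) and a fresh conominal above each negative one (those in $\overline y$ and $\overline w$), and yields an equivalent quasi-inequality consisting of a block of flat pure inequalities from the skeleton together with the PIA-connections $\overline{\nomj\leq\alpha}$, $\overline{\beta\leq\cnomm}$, $\overline{\nomj'\leq\gamma}$ and $\overline{\delta\leq\cnomm'}$.

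Next I would solve for the $\epsilon$-critical variables. Along its unique good critical branch, each connection $\nomj\leq\alpha(!p,\overline z)$ residuates (via the $\mathsf{LA}$/$\mathsf{RA}$ recursion of Definition \ref{def:RA_and_LA}) to a lower bound $\mathsf{LA}(\alpha)(\nomj,\overline z)\leq p$, and dually each $\beta(!q,\overline z)\leq\cnomm$ to an upper bound $q\leq\mathsf{RA}(\beta)(\cnomm,\overline z)$; these are precisely the members of $\mathsf{Mv}(p)$ and $\mathsf{Mv}(q)$. The dependency order $<_\Omega$ lets me arrange the eliminations so that every minimal valuation depends only on (co)nominals and already-eliminated variables, whence repeated application of the Ackermann lemmas substitutes $\overline{\bigvee\mathsf{Mv}(p)}$ for $\overline p$ and $\overline{\bigwedge\mathsf{Mv}(q)}$ for $\overline q$ and removes every critical variable. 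Crucially, the $\overline\alpha,\overline\beta$-connections are \emph{consumed} by this residuation, so the nominals $\overline\nomj$ and conominals $\overline\cnomm$ at $\overline x,\overline y$ are left free, whereas the $\overline\gamma,\overline\delta$-connections, containing no critical variable, are never residuated and survive as $\overline{\nomj'\leq\gamma[\text{minval}]}$ and $\overline{\delta[\text{minval}]\leq\cnomm'}$, with $[\text{minval}]$ abbreviating the substitution of the minimal valuations for the non-critical occurrences of $\overline p,\overline q$.

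At this point the residual system is in Ackermann shape with respect to the nominals $\overline{\nomj'}$ and conominals $\overline{\cnomm'}$ attached to $\overline\gamma,\overline\delta$: each $\nomj'$ occurs only in its connection $\nomj'\leq\gamma[\text{minval}]$ as a lower approximant and positively at its skeleton position in $\overline z$, and dually each $\cnomm'$ occurs only as $\delta[\text{minval}]\leq\cnomm'$ and negatively at its position in $\overline w$. Hence the reversed first-approximation rule, i.e.\ Lemma \ref{lemma:Universal_Ackermann_with_nominals_and_conominals} read from (b) to (a) (in the atomic case, Corollary \ref{cor:firstapproxnomcnom}), re-absorbs $\overline{\gamma[\text{minval}]}$ and $\overline{\delta[\text{minval}]}$ into the positions $\overline z,\overline w$ while leaving the free $\overline\nomj,\overline\cnomm$ untouched, collapsing the quasi-inequality into the single pure inequality \eqref{eq: pure inequality:statement}. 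To close the loop, I would replace the universally quantified $\overline\nomj,\overline\cnomm$ by fresh propositional variables $\overline{p'},\overline{q'}$, obtaining the $\mathcal{L}^*$-inequality \eqref{eq: very simple Sahlqvist in reversive language}; it is very simple Sahlqvist because the only PIA-subformulas at its positions $\overline x,\overline y$ are now the plain variables $\overline{p'},\overline{q'}$. Running ALBA on it simply approximates $\overline{p'},\overline{q'}$ from below and above by $\overline\nomj,\overline\cnomm$ through first approximation and then reverses exactly as above, reproducing \eqref{eq: pure inequality:statement} verbatim.

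The main obstacle I anticipate is the occurrence-bookkeeping behind the Ackermann-shape claim of the third paragraph: one must prove that after eliminating every critical variable, no stray occurrence of any $\nomj'$ or $\cnomm'$ survives outside its designated connection and its single skeleton position, as otherwise the reversed first-approximation substitution would be unsound. This is precisely where the hypotheses enter. Scatteredness and definiteness keep each skeleton position and each critical branch occupied by a single, uniquely determined (co)nominal; the absence of uniform variables excludes purely parametric positions that would obstruct the minimal-valuation substitution; and the requirement that every $-\wedge$ and $+\vee$ inside a PIA-formula be $\epsilon$-critical on both coordinates ensures that residuating through the PIA-formulas never splits off a non-critical conjunct or disjunct that would leave a dangling occurrence of a critical variable. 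Tracking these occurrences through the first three steps, rather than carrying out the Ackermann computations themselves, is the delicate part of the argument.
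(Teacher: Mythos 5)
Your proposal is correct and follows essentially the same route as the paper's proof: first approximation placing (co)nominals at the maximal PIA positions, $\mathsf{LA}/\mathsf{RA}$ residuation of the $\overline{\alpha},\overline{\beta}$-connections, Ackermann elimination of the critical variables by recursion on $<_\Omega$ via the sets $\mathsf{Mv}(p)$ and $\mathsf{Mv}(q)$, reversed first approximation (Lemma \ref{lemma:Universal_Ackermann_with_nominals_and_conominals} read bottom-up) to absorb the substituted $\overline{\gamma}$ and $\overline{\delta}$, and renaming the surviving (co)nominals to fresh propositional variables. The only differences are presentational: you flatten the skeleton via Corollary \ref{lemma:flattening_skeleton_formulas} and re-assemble it at the end, whereas the paper keeps the skeleton intact by applying the restricted-Ackermann form of first approximation in a single step, and you make explicit the concluding ALBA run on \eqref{eq: very simple Sahlqvist in reversive language} reproducing \eqref{eq: pure inequality:statement}, which the paper leaves implicit.
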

\begin{proof}
In what follows, we adapt  the ALBA run of $(\varphi\leq \psi)[\overline{\alpha}/!\overline{x}, \overline{\beta}/!\overline{y},\overline{\gamma}/!\overline{z}, \overline{\delta}/!\overline{w}]$ described in \cite[Section 1.6]{slanted}.  In the DLE-setting, we can drop the assumption that at least one vector among $\overline{\gamma}$ and $\overline{\delta}$ be nonempty, since we can assume w.l.o.g.~that this is the case. Indeed, notice preliminarily that, in the DLE-setting, we can assume w.l.o.g.~that no $-\vee$ or $+\wedge$ nodes occur in  each $\alpha$ in $\overline{\alpha}$ and $\beta$ in $\overline{\beta}$. Indeed, if this was not the case, then 
during pre-processing,  in each $\alpha$ in $\overline{\alpha}$ and $\beta$ in $\overline{\beta}$, ALBA exhaustively distributes $-f\in \mathcal{F}$ over $-\vee$ in its positive coordinates and over $+\wedge$ in its negative coordinates, and  $+g\in \mathcal{G}$ over $+\wedge$ in its positive coordinates and over $-\vee$ in its negative coordinates, so as to bring occurrences of $-\vee$ and $+\wedge$ as close as possible to the root of each PIA subformula. In the DLE-setting, these  nodes are SLR, and can be then incorporated into the Skeleton of the input inequality. %Let $(\varphi\leq \psi)[\overline{\alpha}/!\overline{x}, \overline{\beta}/!\overline{y},\overline{\gamma}/!\overline{z}, \overline{\delta}/!\overline{w}]$ denote the inequalities resulting from this step (we suppress the indices). 
 Under this additional assumption, the inductive shape implies that each $\alpha$ in $\overline{\alpha}$ and $\beta$ in $\overline{\beta}$ contains exactly one critically occurring proposition variable (in the LE-setting, the same situation is reached modulo application of splitting rules; however, this will entail the duplication of the corresponding nominal/conominal variables in the antecedent of the quasi-inequality).
 Now for the proof of the main claim, assume for contradiction that  both $\overline{\gamma}$ and $\overline{\delta}$ were empty. Then,  % because we have assumed that \eqref{eq: after Ackermann anind}  describes the successful output of ALBA, where all proposition variables have been eliminated, %. If  both $\overline{\gamma}$ and $\overline{\delta}$ were empty, then 
 all maximal $\epsilon^\partial$-subformulas occur as subformulas of some PIA-subformulas. Hence,  because there are no occurrences of $-\vee$ and $+\wedge$ in any PIA-subformula, the separation, within every PIA-formula, between $\epsilon$-critical branches and maximal $\epsilon^\partial$-uniform subformulas must be effected by non-unary SRR-connectives, which  creates $<_\Omega$-dependence relations among variables. In particular, if all maximal $\epsilon^\partial$-subformulas occur as subformulas of some PIA-subformulas, then every atomic proposition has some $<_\Omega$-successor variable, which contradicts the well-foundedness of the strict partial order $<_\Omega$. 

By Lemma \ref{lemma:Universal_Ackermann_with_nominals_and_conominals} (cf.~\cite[Section 6]{CoPa:non-dist} and \cite[Section 5]{CoPa-constructive}),
 ALBA transforms 
\[(\varphi\leq \psi)[\overline{\alpha}/!\overline{x}, \overline{\beta}/!\overline{y},\overline{\gamma}/!\overline{z}, \overline{\delta}/!\overline{w}]
\]
into the following \emph{initial quasi-inequality}:
 
\begin{equation}
\label{eq:initial_quasi-inequality_p_and_q}
\begin{array}{l}
 \forall\overline{\nomj} \forall\overline{\cnomm}\forall\overline{\nomi}\forall\overline{\cnomn}\Big(( \overline{\nomj}\leq \overline{\alpha_p}\ \&\  \overline{\nomj}\leq \overline{\alpha_q}\ \&\ \overline{\beta_p}\leq \overline{\cnomm} \ \&\ \overline{\beta_q}\leq \overline{\cnomm} \ \&\ \overline{\nomi}\leq \overline{\gamma} \ \&\ \overline{\delta}\leq \overline{\cnomn})\Rightarrow \hspace{1cm} \\ 
 \hfill(\varphi\leq \psi)[!\overline{\nomj}/!\overline{x}, !\overline{\cnomm}/!\overline{y},!\overline{\nomi}/!\overline{z}, !\overline{\cnomn}/!\overline{w}] \Big),
 \end{array}
\end{equation}
where $\overline{p}$ (resp.~$\overline{q}$) is the vector of the atomic propositions in $\varphi\leq \psi$ such that $\varepsilon(p) = 1$ (resp.~$\varepsilon(q) = \partial$), and %e.g.~$p$ is the one (if any) critical variable occurring in $\alpha_p$. 
the subscript in each PIA-formula in $\overline{\alpha}$ and $\overline{\beta}$ indicates the unique $\varepsilon$-critical propositional variable occurrence contained in that formula.

Next, we enter the reduction/elimination stage,  aimed at eliminating all occurring propositional variables. 
By applying adjunction and residuation rules on all PIA-formulas $\alpha$ and $\beta$, the antecedent of \eqref{eq:initial_quasi-inequality_p_and_q} can be equivalently written as follows (cf.~Definition \ref{def:RA_and_LA}):  
\begin{equation}
\label{eq:adj_anind}
\begin{array}{rclcl}
\overline{\mathsf{LA}(\alpha_p)[\nomj/u, \overline{p},\overline{q}]}\leq\overline{p} &  \& &   \overline{\mathsf{RA}(\beta_p)[\cnomm/u, \overline{p},\overline{q}]}\leq \overline{p} & \& & \overline{\nomi}\leq \overline{\gamma}\ \  \& \\
\overline{q}\leq \overline{\mathsf{LA}(\alpha_q)[\nomj/u, \overline{p},\overline{q}]} & \& & \overline{q}\leq\overline{\mathsf{RA}(\beta_q)[\cnomm/u, \overline{p},\overline{q}]} & \& &  \overline{\delta}\leq \overline{\cnomn}.
\end{array}
\end{equation}
Notice that the `parametric' (i.e.~non-critical) variables in $\overline{p}$ and $\overline{q}$ actually occurring in each formula $\mathsf{LA}(\alpha_p)[\nomj/u, \overline{p},\overline{q}]$, $\mathsf{RA}(\beta_p)[\cnomm/u, \overline{p},\overline{q}]$, $\mathsf{LA}(\alpha_q)[\nomj/u, \overline{p},\overline{q}]$, and $\mathsf{RA}(\beta_q)[\cnomm/u, \overline{p},\overline{q}]$ are those that are strictly $<_\Omega$-smaller than the (critical) variable indicated in the subscript of the given PIA-formula. After applying adjunction and residuation as indicated above, the resulting quasi-inequality is in Ackermann shape relative to the $<_\Omega$-minimal variables.

For every $p\in\overline{p}$ and  $q\in\overline{q}$ let us define the sets $\mathsf{Mv}(p)$ and $\mathsf{Mv}(q)$ by recursion on $<_\Omega$ as follows:
\begin{itemize}
	\item
	$\mathsf{Mv}(p):=\{\mathsf{LA}(\alpha_p)[\nomj_k/u,\overline{\mathsf{mv}(p)}/\overline{p},\overline{\mathsf{mv}(q)}/\overline{q}], \mathsf{RA}(\beta_p)[\cnomm_h/u,\overline{\mathsf{mv}(p)}/\overline{p},\overline{\mathsf{mv}(q)}/\overline{q}]\mid 1\leq k\leq n_{i_1}, 1\leq h\leq n_{i_2}, \overline{\mathsf{mv}(p)}\in\overline{\mathsf{Mv}(p)},\overline{\mathsf{mv}(q)}\in\overline{\mathsf{Mv}(q)}  \}$
	\item $\mathsf{Mv}(q):=\{\mathsf{LA}(\alpha_q)[\nomj_h/u,\overline{\mathsf{mv}(p)}/\overline{p},\overline{\mathsf{mv}(q)}/\overline{q}], \mathsf{RA}(\beta_q)[\cnomm_k/u,\overline{\mathsf{mv}(p)}/\overline{p},\overline{\mathsf{mv}(q)}/\overline{q})\mid 1\leq h\leq m_{j_1}, 1\leq k\leq m_{j_2}, \overline{\mathsf{mv}(p)}\in\overline{\mathsf{Mv}(p)},\overline{\mathsf{mv}(q)}\in\overline{\mathsf{Mv}(q)}  \}$
\end{itemize}
where,  $n_{i_1}$ (resp.~$n_{i_2}$) is the number of occurrences of $p$ in $\alpha$s (resp.~in $\beta$s) for every $p\in\overline{p}$, and $m_{j_1}$ (resp.~$m_{j_2}$) is the number of occurrences of $q$ in $\alpha$s (resp.~in $\beta$s) for every $q\in\overline{q}$. 
By induction on $<_\Omega$, we can apply the Ackermann rule exhaustively so as to eliminate all variables $\overline{p}$  and  $\overline{q}$.  Then, the resulting {\em purified} quasi-inequality has the following form: % (recall that by assumption all formulas in  $\overline{\gamma}$ and $\overline{\delta}$ agree with  $\epsilon^\partial$):
 
\begin{equation}
\label{eq:after_Ackermann_anind}
\begin{array}{l}
 \forall\overline{\nomj} \forall\overline{\cnomm}\forall\overline{\nomi}\forall\overline{\cnomn}\Bigg((\overline{\nomi}\leq \overline{\gamma}\left[\overline{\bigvee\mathsf{Mv}(p)}/\overline{p}, \overline{\bigwedge\mathsf{Mv}(q)}/\overline{q}\right]\ \&\ \overline{\delta}\left [\overline{\bigvee\mathsf{Mv}(p)}/\overline{p}, \overline{\bigwedge\mathsf{Mv}(q)}/\overline{q}\right]\leq \overline{\cnomn})\Rightarrow \\ \hfill(\varphi\leq \psi)[!\overline{\nomj}/!\overline{x}, !\overline{\cnomm}/!\overline{y},!\overline{\nomi}/!\overline{z}, !\overline{\cnomn}/!\overline{w}] \Bigg),
 \end{array}
 \end{equation}
 The inequality above is in Ackermann shape relative to $\overline{\nomi}$ and $\overline{\cnomn}$. 
 Hence, by applying Lemma \ref{lemma:Universal_Ackermann_with_nominals_and_conominals} in the bottom-up direction we can eliminate these variables and obtain the following required inequality:
 \begin{equation}
\label{eq:pure_inequality}
 \forall\overline{\nomj} \forall\overline{\cnomm}\left((\varphi\leq \psi)[!\overline{\nomj}/!\overline{x}, !\overline{\cnomm}/!\overline{y},\overline{\gamma}\left[\overline{\bigvee\mathsf{Mv}(p)}/\overline{p}, \overline{\bigwedge\mathsf{Mv}(q)}/\overline{q}\right]/!\overline{z}, \overline{\delta}\left [\overline{\bigvee\mathsf{Mv}(p)}/\overline{p}, \overline{\bigwedge\mathsf{Mv}(q)}/\overline{q}\right]/!\overline{w}] \right).\end{equation}
 \end{proof}

\subsection{Problems in generalizing Kracht formulas}

The original Kracht's theorem is a model theoretic argument which has many ties with the classical setting, thus rendering it unsuitable for direct generalisation. In this subsection we discuss some of the problems in porting Kracht formulae to the general setting, besides the trivial ones (such as the fact that the original definition is tightly bound to a specific semantic).

\begin{definition}[Kracht formulas]
\label{def:originalkracht}
A \emph{Kracht formula}\footnote{The definition we present is commonly referred to as \emph{type 1 Kracht formula}. As is well known (cf. \cite{blackburn2002modal}), type 1 Kracht formulas are Kracht formulas in prenex normal form where the matrix is rewritten in DNF.} is a clean $\mathrm{FO}$-formula in prenex normal form with a single free variable $x_0$ and shape:
\[
\forall^R x_1 \cdots \forall^R x_n Q_1^R y_1 \cdots Q_m^R y_m \ \beta(x_0, x_1, \ldots, x_n, y_1, \ldots, y_m)\,,
\]
where $Q_i \in \{\forall,\exists\}$ (for $1 \leq i \leq m$), variables in $X = \{x_1, \ldots, x_n\}$ and $Y=\{y_1,\ldots,y_m\}$ are called \emph{inherently universal} and \emph{non-inherently universal} respectively; $\beta$ is an unquantified formula in DNF whose atoms are of the form: $\top$, $\bot$, $uRx$, $xRu$, $x=u$ where $x\in X \cup \{x_0\}$ and $u\in X\cup Y$.
\end{definition}

\begin{theorem}[{\cite{blackburn2002modal}}] 
\label{thm:originalkracht}
Any Kracht formula can be effectively shown to be the first order correspondent of some Sahlqvist formula.
\end{theorem}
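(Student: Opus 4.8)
The plan is to invert the Sahlqvist--van Benthem correspondence algorithm. Given a (type~1) Kracht formula $\kappa(x_0)$, I would effectively read off a Sahlqvist formula $\phi = \phi_1 \to \phi_2$ (with $\phi_1$ a conjunction of boxed atoms and $\phi_2$ positive, i.e.\ a \emph{very simple} Sahlqvist formula) and then \emph{verify} the claim by running the algorithm \emph{forward} on $\phi$, checking that the local first-order correspondent it computes is logically equivalent to $\kappa$. The guiding principle is that the normal form of a Kracht formula mirrors, block by block, the shape of that output: the restricted quantifier prefix records the modal skeleton of $\phi$, while the DNF matrix $\beta$ records the relational guards of this skeleton together with the relational atoms produced by substituting the minimal valuations of the propositional variables. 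The crux of the inversion is therefore to recover, from $\beta$, which atoms come from a minimal valuation and which encode the genuine modal backbone.

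For the construction I would first separate the atoms of $\beta$ into two kinds. Atoms that tie a node back to $x_0$ along a chain $x_0, y_1, \ldots, t$ — i.e.\ that fit the pattern of a minimal valuation $\{\, t : x_0 R^{k} t \,\}$ — are read as an occurrence of a propositional variable: each such chain marks a place in $\phi_2$ where a (fresh) variable $p$ occurs, and simultaneously dictates a boxed conjunct $\Box^{k} p$ in the antecedent $\phi_1$, whose minimal valuation is exactly this pattern; using a \emph{fresh} $p$ per occurrence lets me sidestep any question of how occurrences should be grouped, since substituting the minimal valuation back into the unique occurrence reproduces precisely that chain. The remaining, \emph{skeleton}, atoms are the relational guards of $\phi_2$: I would organise them into a reachability forest rooted at $x_0$ and read this forest as a tree of nested modalities — a box at a universally quantified node, a diamond at an existentially quantified successor — and then plant the recovered propositional occurrences at the appropriate nodes. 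Handling the disjunction of the DNF as a $\vee$ in the positive consequent and the equalities $x=u$ by identifying the corresponding nodes, this yields a candidate $\phi_1 \to \phi_2$ which by construction is very simple Sahlqvist.

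The step I expect to be the main obstacle is exactly this separation of $\beta$ into a valuation part and a skeleton part, which is delicate because an existential variable may serve either as a diamond of $\phi_2$ or as an intermediate node of a minimal-valuation chain (as in the density axiom, where the existential witness is a path-intermediate rather than a genuine diamond). What disambiguates the two is the direction and anchoring of the relational atoms, and it is precisely here that the defining restriction on Kracht formulas does the work: every relational and equality atom carries an inherently universal variable or $x_0$ on the prescribed side, so the chains back to $x_0$ are unambiguously identifiable, the forest and the valuation patterns can be read off effectively, and the reconstructed formula is guaranteed to be Sahlqvist. What then remains is bookkeeping needed to close the argument: normalising $\kappa$ to clean prenex DNF form, commuting quantifiers across the $\vee$ of the matrix so that the prefix matches the modal nesting of $\phi_2$, treating the degenerate atoms $\top$ and $\bot$, and finally confirming by a forward run of the algorithm that the correspondent of the constructed $\phi$ coincides with $\kappa$ up to first-order equivalence.
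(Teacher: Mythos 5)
You should first note that the paper does not actually prove this theorem: it is quoted from Blackburn--de Rijke--Venema as background, and the paper's own contribution (Section \ref{sec:inversealba}) is a generalization proved by a different, algebraic route. So your proposal has to be judged against the classical construction it tries to reconstruct, and against the paper's generalized algorithm --- and measured that way it has a genuine gap.

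The gap is in your treatment of the inherently universal variables. You only admit antecedent conjuncts of the form $\Box^{k} p$, i.e.\ minimal valuations $\{t : x_0 R^{k} t\}$ anchored at the root $x_0$, and you render every universally quantified node as a box in the consequent. But in a Kracht formula the $x_i$ are bound by \emph{restricted} quantifiers, and the matrix atoms are guaranteed to contain an inherently universal variable --- not $x_0$ --- so the generic atom is anchored at some $x_i \neq x_0$, and a valuation of the form $\{t : x_0 R^{k} t\}$ forgets on which chain, and at which universally chosen point, the atom is anchored. Concretely, take confluence, $\forall x_1 \rhd x_0\,\forall x_2 \rhd x_0\,\exists y \rhd x_1\,(x_2 R y)$ (cf.\ Example \ref{ex:classicalexamplerewritenomcnom}). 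Your recipe reads the atom $x_2 R y$ as a chain $x_0 R x_2 R y$ back to $x_0$, produces the conjunct $\Box\Box p$ and plants $p$ at the node $y$, yielding essentially $\Box\Box p \rightarrow \Box\Diamond p$, whose correspondent says only that every successor $x_1$ has a successor which is \emph{some} $R^2$-successor of $x_0$ --- strictly weaker than confluence. The correct Sahlqvist formula is $\Diamond\Box p \rightarrow \Box\Diamond p$: the restricted universal quantifier $\forall x_2 \rhd x_0$ must be internalized as a \emph{diamond in the antecedent}, so that the minimal valuation of $p$ is $R[x_2]$, anchored at the universally chosen point itself. Kracht's actual construction builds the antecedent as a diamond-nested conjunction tree mirroring the restriction forest of the universal prefix, with proposition letters whose minimal valuations are singletons $\{x_i\}$ or successor sets $R[x_i]$; the singleton letters are also what handles atoms $u R x_i$ and equalities $x_i = u$, which your quotient-by-node-identification does not treat soundly. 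This is precisely the mechanism in the paper's generalization: in Subsection \ref{ssec:krachtovssahlq} the \emph{type 2} restricted universal quantifiers are eliminated by inverse Ackermann steps into the antecedent inequalities, assembling the skeleton $\varphi \leq \psi$ which, after contraposition, becomes the two sides of the very simple Sahlqvist inequality, while only the \emph{type 1} quantifiers and aliases survive as propositional variables. So your disambiguation worry (valuation chain versus skeleton diamond) is real, but the fix is not a finer sorting of the atoms of $\beta$: it is that universal restrictions must migrate to the antecedent side, which your antecedent class $\bigwedge_k \Box^{k} p$ cannot express; your final forward-verification step would detect the mismatch on confluence but offers no repair, so effectiveness fails as well.
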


\begin{example}[\cite{inverse_correspondence_tense}]
\label{ex:goranko_backward_looking} 
The inequality $p\wedge \Box(\Diamond p\rightarrow \Box q)\leq \Diamond\Box\Box q$  (cf. \cite{GoVa2006}) is not Sahlqvist for any order type, but it is inductive w.r.t.~the order-type $\varepsilon(p, q) = (1, 1)$ and $p <_\Omega q$. Running ALBA on it yields
\begin{center}
\begin{tabular}{c l}
& $\forall p\forall q(p\wedge \Box(\Diamond p\rightarrow \Box q)\leq \Diamond\Box\Box q)$\\
iff & $\forall p\forall q\forall \nomj\forall \cnomm[(\nomj\leq p\wedge \Box(\Diamond p\rightarrow \Box q) \ \&\  \Diamond\Box\Box q\leq \cnomm) \Rightarrow \nomj\leq \cnomm]$\\
iff & $\forall p\forall q\forall \nomj\forall \cnomm[(\nomj\leq p\ \&\  \nomj\leq\Box(\Diamond p\rightarrow \Box q) \ \&\  \Diamond\Box\Box q\leq \cnomm) \Rightarrow \nomj\leq \cnomm]$\\
iff & $\forall q\forall \nomj\forall \cnomm[(\nomj\leq\Box(\Diamond \nomj\rightarrow \Box q) \ \&\  \Diamond\Box\Box q\leq \cnomm) \Rightarrow \nomj\leq \cnomm]$\\
iff & $\forall q\forall \nomj\forall \cnomm[(\Diamondblack(\Diamond \nomj\wedge \Diamondblack\nomj)\leq q \ \&\  \Diamond\Box\Box q\leq \cnomm) \Rightarrow \nomj\leq \cnomm]$\\
iff & $\forall \nomj\forall \cnomm[\Diamond\Box\Box \Diamondblack(\Diamond \nomj\wedge \Diamondblack\nomj)\leq \cnomm\Rightarrow \nomj\leq \cnomm]$\\
iff & $\forall \nomj[\nomj\leq \Diamond\Box\Box \Diamondblack(\Diamond \nomj\wedge \Diamondblack\nomj)]$
\end{tabular}
\end{center}
In the classical setting, as the nominal $\nomj$ represents a world $x$ of the Kripke frame, this can be rewritten as follows:
\begin{center}
\begin{tabular}{cl}
& $\forall x(x\in\val{ \Diamond\Box\Box \Diamondblack(\Diamond \nomj\wedge \Diamondblack\nomj)}[\nomj: = x])$\\
iff & $\forall x\exists y(xR y\ \&\ y\in\val{\Box\Box \Diamondblack(\Diamond \nomj\wedge \Diamondblack\nomj)}[\nomj: = x])$\\
iff & $\forall x\exists y(xR y\ \&\ \forall z(yR^2 z\Rightarrow z\in\val{\Diamondblack(\Diamond \nomj\wedge \Diamondblack\nomj)}[\nomj: = x]))$\\
iff & $\forall x\exists y(xR y\ \&\ \forall z(yR^2 z\Rightarrow \exists w(wRz\ \&\ wRx\ \&\ xRw)))$.\\
\end{tabular}
\end{center}
This last condition can equivalently be rewritten in three ways:
\begin{center}
\begin{tabular}{c l}
 & $\forall x(\exists y\rhd x)( \forall z_1\rhd y)(\forall z \rhd z_1)( \exists w\blacktriangleright z)(wRx\ \&\ xRw)$\\
iff & $\forall x(\exists y\rhd x)( \forall z_1\rhd y)(\forall z \rhd z_1)( \exists w\rhd x)(wRz\ \&\ wRx)$\\
iff & $\forall x(\exists y\rhd x)( \forall z_1\rhd y)(\forall z \rhd z_1)( \exists w\blacktriangleright x)(wRz\ \&\ xRw)$,\\
\end{tabular}
\end{center}
where $(\exists w \blacktriangleright z)$ quantifies $w$ over the \emph{predecessors} of $z$. The second and third ones are not Kracht formulas, as the atom $wRz$ has no inherently universal variables in it (the only inherently universal is $x$). The first one is a \emph{tense} Kracht formula. This consideration suggests that in order to express first-order conditions in Kracht shape for an inductive formula, we need to admit the presence of the adjoint operators, thus allowing for backwards looking restricted quantifiers. 
\end{example}

\section{Crypto inductive inequalities}
\label{sec:cryptoinductive}
\begin{definition}
\label{def:adj:res:conservative}
	An SRA node $+g$ (resp.\ $-f$) is \emph{adjunction-conservative} if the adjoint $g^{\flat}$ of $g$ (resp.\ $f^{\sharp}$ of $f$) belongs to $\mathcal{L}$. Similarly, an SRR node $+g$ (resp.\ $-f$) is \emph{residuation-conservative in the $i$-th coordinate} for $1 \leq i \leq n_g$ ($1 \leq i \leq n_f$) if the residual $g_i^{\flat}$ of $g$ (resp.\ $f_i^{\sharp}$ of $f$) belongs to $\mathcal{L}$.
\end{definition}

\begin{definition}	
\label{def:inner:splitable:branch}
	A branch in a signed generation tree $\pm s$ is called \emph{splittable} if it is the concatenation of two paths $P_1$ and $P_2$, one of which may possibly be of length $0$, such that $P_1$ is a path from the leaf consisting (apart from variable nodes) only of adjunction-conservative SRA-nodes and SRR-nodes which are residuation-conservative in the coordinate through which $P_1$ passes, and $P_2$ consists of (any) $\mathcal{L}$-nodes. 
\end{definition}

\begin{definition}
\label{def:epsomega:unpackable}
Given an order type $\varepsilon$, a strict partial order $\Omega$ on propositional variables, a signed generation tree $\pm \phi$ is called \emph{$(\Omega, \varepsilon)$-unpackable} if $\varepsilon^{\partial}(\pm \phi)$ and
	\begin{enumerate}
	    \item $\phi$ is a propositional variable or constant, or
		\item among $var(\phi)$\footnote{With $var(\phi)$ we denote the collection of variables in $\atprop$ occurring in $\phi$.} there is a variable $p_0$ maximal with respect to $\Omega$, such that
			\begin{enumerate}
			\item the path $P$ in $\pm \phi$ ending in $p_0$ is splittable, 
			%\item every SRA-node on $P$ is adjunction-conservative,
			\item if $P$ passes through the $j$th coordinate of an $m$-ary SRR-node \[\circledast(\gamma_1,\dots,\gamma_{j-1},\beta,\gamma_{j+1}\ldots,\gamma_m), \] then the signed subtree $\pm \gamma_i$ is $(\Omega, \epsilon)$-unpackable for all $i \neq j$.
			\end{enumerate}	
	\end{enumerate}
\end{definition}

\begin{definition}\label{def:Crypto:inductive}
	An $\mathcal{L}^{\ast}$ inequality $\phi \leq \psi$ is called a \emph{crypto $\mathcal{L}$-inductive} if it is a  very simple $\epsilon$-Sahlqvist formula in $\mathcal{L}^{\ast}$ and in the signed generation trees $+\phi$ and $- \psi$:
	\begin{enumerate}
		\item All $\epsilon$-critical branches contain only signed connectives from $\mathcal{L}$,
		%\item All $\epsilon$-non-critical branches are inner-good,
		\item There exists a strict partial order $\Omega$ on the propositional variables occurring in $\phi \leq \psi$, such that for every  $\epsilon$-non-critical branch the signed subtree rooted at the topmost (closest to the root) node on the branch properly belonging to $\mathcal{L}^{\ast}$ is $(\Omega, \epsilon)$-unpackable. 
	\end{enumerate}
\end{definition}

\begin{example}
	Consider the language $\mathcal{L}_{0}$ of basic (classical) modal logic. Then the $\mathcal{L}_{0}^{*}$ inequality $\Diamondblack p \leq \blacksquare p$ is very simple Sahlqvist, and is valid at a point in a Kripke frame iff that point has at most one predecessor. However,  as that condition is not invariant under generated subframes (for the language $\mathcal{L}_{0}$), it is not definable by a  $\mathcal{L}_{0}$ formula. Indeed, $\Diamondblack p \leq \blacksquare p$ is not crypto $\mathcal{L}_0$-inductive, as any choice of $\epsilon$ will result in an $\epsilon$-critical branch with a node not in $\mathcal{L}_{0}$. 
\end{example}

\begin{prop}
\label{prop:cryptotoinductive}
	Every crypto $\mathcal{L}^{\ast}$-inductive inequality is equivalent to an inductive formula in $\mathcal{L}$. Moreover, the latter can be obtained from the former by the application of ALBA-rules.
\end{prop}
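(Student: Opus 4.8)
The plan is to exploit the fact, made explicit in Proposition~\ref{prop:ALBA_output_in_ackermann}, that the ALBA output of a definite inductive $\mathcal{L}$-inequality is itself the ALBA output of a very simple Sahlqvist $\mathcal{L}^\ast$-inequality whose non-critical subtrees are precisely the $\mathsf{Mv}$-substitutions built from the $\mathsf{LA}/\mathsf{RA}$-images of the PIA-subformulas of the original inductive inequality. Since every ALBA-rule is an equivalence on perfect $\mathcal{L}^+$-algebras, it suffices to show that the adjunction and residuation rules can be applied to a crypto $\mathcal{L}$-inductive inequality $\phi\leq\psi$ so as to \emph{invert} the recursion of Definition~\ref{def:RA_and_LA} on each non-critical subtree, turning the subtree into a PIA $\mathcal{L}$-formula and thereby producing an inductive $\mathcal{L}$-inequality equivalent to $\phi\leq\psi$. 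I would argue by induction on the nesting of the unpackable subtrees (equivalently, along the well-founded order $\Omega$ witnessing crypto-inductivity).

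First I would put $\phi\leq\psi$ in its compact very simple Sahlqvist form, separating the skeleton, whose $\epsilon$-critical branches lie entirely in $\mathcal{L}$ by clause~1 of Definition~\ref{def:Crypto:inductive}, from the maximal non-critical subtrees $\overline{\gamma},\overline{\delta}$. By clause~2, each such subtree is rooted at a topmost $\mathcal{L}^\ast$-node whose signed generation tree is $(\Omega,\epsilon)$-unpackable. Fixing such a subtree, I would use its splittable decomposition $P_1\cdot P_2$ (Definition~\ref{def:inner:splitable:branch}): along $P_1$ every SRA-node is adjunction-conservative and every SRR-node is residuation-conservative (Definition~\ref{def:adj:res:conservative}), so the corresponding adjoint/residual connective already belongs to $\mathcal{L}$. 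Reading the subtree as the $\mathsf{LA}$- (resp.\ $\mathsf{RA}$-)image of a would-be PIA-formula, I would apply, node by node down $P_1$, the ALBA adjunction and residuation rules in the direction that undoes the clauses of Definition~\ref{def:RA_and_LA}: each $\mathcal{L}^\ast$-connective is moved across the inequality and replaced by the $\mathcal{L}$-connective it is the adjoint/residual of, while the $\Omega$-maximal variable lying on $P_1$ becomes the $\epsilon$-critical leaf of the recovered PIA-path. The side-subtrees occupying the remaining coordinates of each SRR-node are, by clause~2(b) of Definition~\ref{def:epsomega:unpackable}, themselves unpackable and $\epsilon^\partial$-uniform, so they are dealt with by the induction hypothesis and furnish the parametric arguments of the PIA-formula. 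A bottom-up application of the universal Ackermann lemma with nominals and conominals (Lemma~\ref{lemma:Universal_Ackermann_with_nominals_and_conominals}) then re-introduces a fresh critical variable, namely the one that ALBA eliminates when run forward on the recovered inductive inequality.

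Carrying this out for every non-critical subtree and finally reversing the first-approximation step (Corollary~\ref{cor:firstapproxnomcnom}) reassembles an $\mathcal{L}$-inequality $s\leq t$. It remains to verify that $s\leq t$ is $(\Omega',\epsilon)$-inductive for the order $\Omega'$ obtained by extending $\Omega$ so that each re-introduced critical variable sits strictly above the parametric variables occurring in its side-subtrees. The $\epsilon$-critical branches of $s\leq t$ are the original skeleton branches concatenated with the recovered PIA-paths, hence good in the sense of Definition~\ref{def:good:branch}; and condition~2 of Definition~\ref{Inducive:Ineq:Def} holds because each SRR-node created by the inversion carries, in its non-critical coordinates, exactly the $\epsilon^\partial$-uniform side-subtrees coming from clause~2(b), while the placement of the re-introduced variables in $\Omega'$ enforces the required $<_{\Omega'}$-dependencies. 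As every manipulation was an ALBA-rule, and hence an equivalence, $\phi\leq\psi$ and $s\leq t$ have the same first-order correspondent and are therefore equivalent.

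The main obstacle I anticipate lies in the bookkeeping of the reassembly rather than in any individual rule application. One must check that the inversion of $\mathsf{LA}/\mathsf{RA}$ is well-defined and always lands in $\mathcal{L}$ — this is exactly what the adjunction- and residuation-conservativity conditions guarantee — and that the splittable path $P_1$ correctly singles out the unique variable occurrence tracing back to a critical variable of the recovered inductive inequality, as opposed to the parametric occurrences living in the SRR side-subtrees. Matching clause~2(b) of $(\Omega,\epsilon)$-unpackability to condition~2 of $(\Omega',\epsilon)$-inductivity, and confirming that $\Omega'$ remains a well-founded strict order after the new variables are inserted, is where the argument must be handled with the greatest care.
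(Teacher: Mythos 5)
Your core engine is the paper's: inverting the $\mathsf{LA}/\mathsf{RA}$ recursion of Definition~\ref{def:RA_and_LA} on each unpackable subtree via adjunction/residuation, with adjunction-/residuation-conservativity along the splittable path guaranteeing that the recovered PIA formula lands in $\mathcal{L}$, recursion along $\Omega$ handling the SRR side-subtrees, and a final check that the reassembled inequality is $(\Omega',\epsilon)$-inductive. However, there is a genuine gap in your variable-handling, and the step as written would fail. Your derivation never performs a first approximation and contains no nominals or conominals — the crypto inequality is purely propositional — so there is nothing for Corollary~\ref{cor:firstapproxnomcnom} to ``reverse'', and Lemma~\ref{lemma:Universal_Ackermann_with_nominals_and_conominals}, which you invoke to ``re-introduce a fresh critical variable'', introduces (co)nominal variables interpreted over completely join-/meet-irreducible elements; it cannot produce the fresh \emph{propositional} variables that must serve as the $\epsilon$-critical variables of the output inductive $\mathcal{L}$-inequality. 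The paper stays entirely at the propositional level: the fresh variables $\overline{q},\overline{r}$ are introduced \emph{first}, by the inverse of the plain right-/left-handed Ackermann lemmas, extracting each unpackable subtree into a premise $q_i\leq\alpha_i$ (resp.\ $\beta_i\leq r_i$) of a quasi-inequality whose consequent $(\phi'\leq\psi')[\overline{q}/!\overline{x},\overline{r}/!\overline{y}]$ contains only $\mathcal{L}$-connectives. The paper's proof of Proposition~\ref{prop:cryptotoinductive} also does not route through Proposition~\ref{prop:ALBA_output_in_ackermann} at all (that is used for the converse, Lemma~\ref{lemma:inductivetocrypto}); your opening framing via matching ALBA outputs could in principle be made into a different, legitimate proof, but your text does not carry it out.

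The mis-ordering is not cosmetic. ALBA's adjunction and residuation rules act on displayed inequalities, not on subformulas in situ, so applying them ``node by node down $P_1$'' to a subtree still buried inside $\phi\leq\psi$ is not a licensed move until the subtree has been extracted; hence the fresh variables must come at the beginning, not, as in your sketch, after the unpacking. Once extraction is done, the paper proceeds exactly as you intend: adjunction/residuation turn the premises into $\mathsf{LA}(\alpha_i)(q_i)\leq_{\varepsilon(p_{i_k})} p_{i_k}$ and $p_{i_k}\leq_{\varepsilon(p_{i_k})}\mathsf{RA}(\beta_i)(r_i)$, with the $\mathsf{LA}/\mathsf{RA}$-images in $\mathcal{L}$ by conservativity; the system is then in Ackermann shape with respect to the \emph{original} variables, which are eliminated by the forward Ackermann rule along $<_\Omega$, yielding $(\phi'\leq\psi')[\overline{q}/!\overline{x},\overline{r}/!\overline{y},\overline{\xi}/\overline{p}]$, the desired inductive $\mathcal{L}$-inequality. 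Your closing caveats (well-definedness of the inversion, insertion of the new variables into $\Omega'$, well-foundedness) do flag the genuinely delicate bookkeeping — the paper's own write-up is terse precisely there — but the concrete fix needed is to replace the appeal to Lemma~\ref{lemma:Universal_Ackermann_with_nominals_and_conominals} and Corollary~\ref{cor:firstapproxnomcnom} by the inverse propositional Ackermann rule applied at the start, and to drop the first-approximation reversal altogether.
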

\begin{proof}
	Suppose that $\phi \leq \psi$ is  a $\mathcal{L}^{\ast}$-inductive inequality and let $\epsilon$ and $\Omega$ be an order type and a strict partial order satisfying Definition \ref{def:Crypto:inductive}. Suppose that $var(\phi \leq \psi) = \{p_1, \ldots p_n\}$. We may assume w.l.o.g.\ that $p_i <_{\Omega} p_j$ implies $i < j$. Starting from propositional variables $p_i$ minimal with respect to $\Omega$, apply the inverse Ackermann rules to extract the subformulas corresponding to the subtree rooted at the topmost (closest to the root) node on the branch properly belonging to $\langresidual$. This transforms $\phi \leq \psi$ into a quasi-inequality of the form 
    \[
    q_1 \leq \alpha_1, \ldots, q_m \leq \alpha_m, \beta_{m+1} \leq r_{m+1}, \ldots, \beta_{\ell} \leq r_{\ell} \Rightarrow (\phi'\leq \psi')[\overline{q}/!\overline{x}, \overline{r}/!\overline{y}]
    \]
    where $\phi'\leq \psi'$ contains only connectives from $\langbase$, the $q_i$ and $r_i$ are new variables, for each $\alpha_i$ contains exactly one variable among $p_1, \ldots p_n$ (namely that which was $\Omega$-maximal in the extracted subtree from which $\alpha$ originates. Applying adjunction and residuation rules this can be transformed into 
    \[
    \begin{array}{lcl}
    \mathsf{LA}(\alpha_1)(q_1) \leq_{\varepsilon(p_{i_1})} p_{i_1}, \ldots, 
    \mathsf{LA}(\alpha_m)(q_m) \leq_{\varepsilon(p_{i_m})} p_{i_m}, & & \\ 
    p_{i_{m+1}} \leq_{\varepsilon(p_{i_{m+1}})} \mathsf{RA}(\beta_1)(r_{m+1}), \ldots, 
    p_{i_{\ell}} \leq_{\varepsilon(p_{i_{\ell}})} \mathsf{RA}(\beta_{\ell})(r_{\ell}) 
    & \Rightarrow & 
    (\phi'\leq \psi')[\overline{q}/!\overline{x}, \overline{r}/!\overline{y}]
    \end{array}
    \]
    Note that each $\mathsf{LA}(\alpha_i)(q_i)$ and $\mathsf{RA}(\beta_{i})(r_{i})$ is a $\langbase$-formula. 
    
    This is now in Ackermann-shape w.r.t.\ the variables $p_1, \ldots p_n$. Applying the Ackermann rules produces   
    
    \[
    (\phi'\leq \psi')[\overline{q}/!\overline{x}, \overline{r}/!\overline{y}, \overline{\xi}/\overline{p}]
    \]
    
    which is an $(\Omega', \epsilon)$inductive inequality in $\langbase$ where $<\Omega'$
\end{proof}

\begin{lemma}
\label{lemma:inductivetocrypto}
Every (definite) $\langbase$-inductive inequality is equivalent to some crypto $\langresidual$-inductive inequality.
\end{lemma}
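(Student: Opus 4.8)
The plan is to exhibit, for the given inequality, the very simple Sahlqvist $\langresidual$-inequality already singled out by Proposition~\ref{prop:ALBA_output_in_ackermann}, and then to verify that this inequality satisfies Definition~\ref{def:Crypto:inductive}. I would first write the given inequality in the compact form of Notation~\ref{notation:inductive:compact}, namely $(\varphi\leq\psi)[\overline\alpha/!\overline x,\overline\beta/!\overline y,\overline\gamma/!\overline z,\overline\delta/!\overline w]$, with dependency order $<_\Omega$ on its variables. Since the inequality is definite, its $\epsilon$-critical branches carry no $+\wedge$ or $-\vee$ in their PIA portions. Uniform variables can be removed first (they never lie on a critical branch, and ALBA's preprocessing replaces them by the appropriate lattice constant), after which I may assume the inequality has none. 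I do \emph{not} need the remaining hypotheses of Proposition~\ref{prop:ALBA_output_in_ackermann} in full: the construction below rests only on the $\mathsf{LA}/\mathsf{RA}$-computation of Definition~\ref{def:RA_and_LA}, which, by its $\vee$- and $\wedge$-clauses, applies verbatim even when a PIA contains binary SRR-nodes ($+\vee$ or $-\wedge$) branching into parametric subformulas; those hypotheses serve only to keep the description of the $\mathsf{Mv}$-sets compact.

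\textbf{The candidate and its equivalence.} Running the reduction recalled in the proof of Proposition~\ref{prop:ALBA_output_in_ackermann} produces the pure $\langresidual$-inequality \eqref{eq: pure inequality:statement}, and the same proposition exhibits the very simple Sahlqvist $\langresidual$-inequality $\mathsf S$ of \eqref{eq: very simple Sahlqvist in reversive language}---obtained by reinstating fresh variables $\overline{p'},\overline{q'}$ in place of the first-approximation nominals $\overline{\nomj}$ and conominals $\overline{\cnomm}$---as an inequality whose ALBA output is again \eqref{eq: pure inequality:statement}. Because every ALBA rule preserves validity on perfect $\langresidual$-algebras, both the given inequality and $\mathsf S$ are equivalent to their common output \eqref{eq: pure inequality:statement}, hence to each other (and, by the canonicity of inductive and Sahlqvist inequalities, over all DLEs). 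It therefore remains only to check that $\mathsf S$ is crypto $\langresidual$-inductive.

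\textbf{Crypto-inductivity of $\mathsf S$ (the main work).} The inequality $\mathsf S$ is very simple $\epsilon$-Sahlqvist by construction: its $\epsilon$-critical leaves are precisely the fresh variables $\overline{p'},\overline{q'}$ sitting at the skeleton positions $!\overline x,!\overline y$, and the branches leading to them run only through the skeleton $(\varphi\leq\psi)$, which contains $\langbase$-connectives only; this gives the first condition of Definition~\ref{def:Crypto:inductive}. For the second, I take the required order to be $<_\Omega$ transported along $p\mapsto p'$ and $q\mapsto q'$. A non-critical branch first traverses the $\epsilon^\partial$-uniform $\langbase$-formula $\gamma$ (resp.\ $\delta$) and then enters a substituted maximal-valuation term $\bigvee\mathsf{Mv}(p)$ or $\bigwedge\mathsf{Mv}(q)$, so the topmost node properly in $\langresidual$ lies inside some $\mathsf{LA}(\alpha)$ or $\mathsf{RA}(\beta)$. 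The crux is the sub-claim, proved by simultaneous induction on $\alpha$ and $\beta$ following the recursion of Definition~\ref{def:RA_and_LA}, that the signed subtree so rooted is $(\Omega,\epsilon)$-unpackable. Each recursion clause rewrites an $\langbase$-connective on the PIA-path as its adjoint or residual in $\langresidual$, whose own adjoint/residual is the original $\langbase$-connective; hence these nodes are adjunction-conservative SRA-nodes and residuation-conservative SRR-nodes in the sense of Definition~\ref{def:adj:res:conservative}, and the path down to the innermost fresh variable is splittable (Definition~\ref{def:inner:splitable:branch}). The non-traversed coordinates of these SRR-nodes are exactly the parametric $\epsilon^\partial$-uniform side-formulas $\psi(\oz)$, $\phi(\oz)$ of Definition~\ref{def:RA_and_LA}; their variables are strictly $<_\Omega$-below the critical one and they are themselves $(\Omega,\epsilon)$-unpackable by the induction hypothesis, so choosing the $\Omega$-maximal variable as in Definition~\ref{def:epsomega:unpackable} closes the induction.

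\textbf{Main obstacle.} I expect the delicate part to be exactly this last induction: keeping the bookkeeping of the transported order $\Omega$ consistent across the nested $\mathsf{Mv}$-substitutions, and checking that the parametric side-formulas produced by the $\vee/\wedge$ and non-unary $f/g$ clauses of $\mathsf{LA}/\mathsf{RA}$ remain $\epsilon^\partial$-uniform and $(\Omega,\epsilon)$-unpackable after substitution. The signed-polarity verifications needed for ``adjunction/residuation-conservative'' and for ``splittable'' are routine given Definition~\ref{def:RA_and_LA}, but the interaction between the recursion and the choice of the $\Omega$-maximal variable at each SRR-node is where the argument must be carried out with care.
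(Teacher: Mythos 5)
Your proposal is correct and follows essentially the same route as the paper: it takes the very simple Sahlqvist $\langresidual$-inequality of Proposition~\ref{prop:ALBA_output_in_ackermann} (the paper's inequality \eqref{eqn:verysimpleshape}) as the candidate, establishes equivalence through the common ALBA output, transports $<_\Omega$ along the map from (co)nominals to the critical variables of the corresponding PIA formulas, and verifies crypto-inductivity by observing that critical branches lie in the $\langbase$-skeleton while the properly-$\langresidual$ nodes occur only inside the $\mathsf{Mv}$-terms, whose $\mathsf{LA}/\mathsf{RA}$-structure yields splittable paths and $(\Omega,\epsilon)$-unpackable parametric subtrees by induction on $<_\Omega$. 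Your treatment of the unpackability induction is, if anything, more explicit than the paper's.
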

\begin{proof}
Given a definite inductive formula $(\varphi \leq \psi)[\overline\alpha/\overline x, \overline\beta/\overline y, \overline\gamma / \overline z, \overline\delta/\overline w]$, by Proposition \ref{prop:ALBA_output_in_ackermann} running ALBA on it yields
\[
\begin{array}{l}
 \forall\overline{\nomj} \forall\overline{\cnomm}\forall\overline{\nomi}\forall\overline{\cnomn}\Big(
 \overline{\nomi}\leq \overbrace{\overline{\gamma}\left[\overline{\bigvee\mathsf{Mv}(p)}/\overline{p}, \overline{\bigwedge\mathsf{Mv}(q)}/\overline{q}\right]}^{\overline{\gamma}^{mv}} \ \&\ 
\overbrace{\overline{\delta}\left [\overline{\bigvee\mathsf{Mv}(p)}/\overline{p}, \overline{\bigwedge\mathsf{Mv}(q)}/\overline{q}\right]}^{{\overline\delta}^{mv}}\leq \overline{\cnomn}
\Rightarrow \\ \hfill (\varphi\leq \psi)[!\overline{\nomj}/!\overline{x}, !\overline{\cnomm}/!\overline{y},!\overline{\nomi}/!\overline{z}, !\overline{\cnomn}/!\overline{w}] \Big),
 \end{array}
\]
Consider now the inequality
\begin{equation} 
\label{eqn:verysimpleshape}
\left(
(\varphi\leq \psi)
[!\overline{\nomj}/!\overline{x}, 
!\overline{\cnomm}/!\overline{y},
!\overline{\gamma}^{mv}/!\overline{z}, 
!\overline{\delta}^{mv}/!\overline{w}] 
\right)
[\overline{p_j}/\overline{\nomj}, 
\overline{p_m}/\overline{\cnomm}],
\end{equation}
with $\overline{p_j}$ (resp.\ $\overline{q_m}$) fresh variables, one for each nominal in $\overline\nomj$ (resp.\ conominal in $\overline\cnomm$). Clearly, the inequality is very simple Sahlqvist in $\langresidual$ for $\varepsilon$ such that each for each $p_j$ (resp.\ $q_m$), $\varepsilon(p_j) = 1$ (resp.\ $\varepsilon(q_m)=\partial$) and some inductive order type $<_\Omega$. More precisely, in the ALBA run each PIA in $\overline\alpha$ (resp.\ $\overline\beta$) is approximated by some nominal in $\overline\nomj$ (resp.\ $\overline\cnomm$), let $\tau$ be the map that given a variable in $\overline\nomj$ and $\overline\cnomm$, yields the critical variable in the corresponding PIA formula. Let $<_{\Omega'}$ the inductive order used in the ALBA run. The inequality \eqref{eqn:verysimpleshape} is very simple Sahlqvist for the inductive order $<_\Omega$ such that for every $r,t \in \{p,q\}$ and $\pureu,\purev$ in $\overline\nomj$ or $\overline\cnomm$,
\[
r_u \leq t_v \quad \quad \text{iff} \quad \quad \tau(u) \leq \tau(v). 
\]
It is clear how the $\varepsilon$-critical branches contain only connectives in $\langbase$, as the only connectives found there are the ones found in the skeleton of the original inductive inequality. Hence, it remains to show condition (2) of Definition \ref{def:Crypto:inductive}. We show that every signed subtree rooted at the topmost connective properly in $\langresidual$ is $(\varepsilon,\Omega)$-unpackable for the $\varepsilon$ and $\Omega$ defined above. Since the operators properly belonging to $\langresidual$ can only occur in the minimal valuations, any of such nodes has to occur in some formula in $\overline\gamma^{mv}$ or $\overline\delta^{mv}$ inside some $\mathsf{Mv}(p)$ (resp.\ $\mathsf{Mv}(q)$) for some variable $p$ (resp.\ $q$); hence the paths passing through these nodes ending in $<_\Omega$-maximal variables are splittable. Suppose that one of such paths passes through the $j$th coordinate of some $m$-ary SRR node in a non $\varepsilon$-critical branch
\[
\circledast(\gamma_1,\dots,\gamma_{j-1},\beta,\gamma_{j+1}\ldots,\gamma_m), 
\]
and let $i$ be any index in $\{1,\ldots,m\}\setminus\{j\}$. The subformula $\gamma_i$ is $(\varepsilon,\Omega)$-unpackable as it is of course splittable, and, inductively, every topmost node not properly in $\langresidual$ is part of some $\mathsf{Mv}(p')$ (resp.\ $\mathsf{Mv}(q')$) for some $p'$ (resp.\ $q'$) preceding $p$ (resp.\ $q$).
\end{proof}

\section{Inverse ALBA}
\label{sec:inversealba}
In this section, we define Kracht formulas for arbitrary DLE-languages and semantics, and then show an algorithm
for inverse correspondence targeting the class of inductive inequalities in such languages. In the remainder of the section we will refer to the set of nominals $\nomi,\nomj,\nomh,\nomk,\ldots$ and $\lambda(\cnoml),\lambda(\cnomm),\lambda(\cnomn),\lambda(\cnomo),\ldots$ as $\nomset$. Respectively, the set of conominals $\cnoml,\cnomm,\cnomn,\cnomo,\ldots$ and $\kappa(\nomi),\kappa(\nomj),\kappa(\nomh),\kappa(\nomk),\ldots$ as $\cnomset$. We also make use of the function $\dneg:\nomset \cup \cnomset \to \nomset \cup \cnomset$ defined in the following way:
\[
\dneg(\purev) =
\begin{cases}
\kappa(\purev) & \text{if} \;\, \purev \in \nomset \\
\lambda(\purev) & \text{if} \;\, \purev \in \cnomset,
\end{cases}
\]
and we will often write $\dneg\purev$ instead of $\dneg(\purev)$. Across the whole section we fix a DLE-language $\langbase(\mathcal{F},\mathcal{G})$.

\subsection{Kracht's formulas in ALBA language}

\begin{definition}[Flat and restricting inequalities]
\label{def:restricting_inequality}
A pure $\langalba$-inequality is \emph{flat} if it is of the form:
\begin{center}
\begin{tabular}{ccccc}
 $\nomi \leq f(\overline{\purew_1})$, & 
 $\nomi \leq g(\overline{\purew_1})$, & 
 $f(\overline{\purew_2}) \leq \cnomn$, &
 $g(\overline{\purew_2}) \leq \cnomn$, & 
 $\pureu \leq \purev$; \\
\end{tabular}
\end{center}
with $\pureu,\purev \in \nomset \cup \cnomset$, $\overline{\purew_1}\in(\nomset \cup \cnomset \setminus \{\nomi\})^\omega$,  $\overline{\purew_2}\in(\nomset \cup \cnomset \setminus \{\cnomn\})^\omega$, $f \in \mathcal{F}^*$, and $g \in \mathcal{G}^*$. Pure variables $\nomi,\cnomn,\pureu,\purev$ are \emph{on display} in the flat inequality. \emph{Restricting inequalities} are flat $\mathcal{L}^+$ inequalities of shape:
\[
\nomi \leq f(\overline{\nomj},\overline{\cnomm}), \ \ \ \ \ g(\overline{\cnomm},\overline{\nomj}) \leq \cnomn, \ \ \ \ \ \nomi  \leq \nomh, \ \ \ \ \  \cnomo \leq \cnomn;
\]
where $f \in \mathcal{F}^*$ (resp. $g \in \mathcal{G}^*$), and, without loss of generality, the first $|\overline{\nomj}|$ (resp.\ $|\overline{\cnomm}|$) coordinates of $f$ (resp. $g$) are the positive ones.
For any restricted inequality of shape $\nomi \leq f(\overline{\nomj},\overline{\cnomm})$ or $\nomi  \leq \nomh$ (resp.~$g(\overline{\cnomm},\overline{\nomj}) \leq \cnomn$ or $\cnomo \leq \cnomn$) , $\nomi$ (resp.~$\cnomn$) is the \emph{restrictor} of $\overline\nomj$ and $\overline\cnomm$.
\end{definition}

In the classical setting, restricting inequalities encode, among the other things, relational and equality atoms. Indeed, in the complex algebra of a Kripke frame $\mathbb{K} = (W,R)$, nominals (resp.\ conominals) are interpreted singletons (resp.\ cosingletons) $\{x\}, \{y\}, \ldots \subseteq W$ (resp.\ $\{x\}^c, \{y\}^c, \ldots \subseteq W$); hence the following equivalences hold\footnote{In the right column of the table, we write $x,y,z$ instead of $\{x\}, \{y\}, \{z\}$.}:

\[
\begin{array}{r||clclclcl}
x R y  & x \leq  \Diamond y & \mbox{iff} &\Box y^c  \leq  x^c & \mbox{iff} & y \leq \Diamondblack x & \mbox{iff} & \blacksquare x^c \leq y^c \, \\
x=y & x \leq y & \mbox{iff} & y^c \leq x^c & \mbox{iff} & x \nleq y^c & & \,\\
x=y=z & x \rightarrow y^c \leq  z^c & \mbox{iff} & x \leq y \pdla z^c.
\end{array}
\]

\begin{example}
\label{ex:classicalexamplerewritenomcnom}
The first order condition expressing confluence is a well-known Kracht formula which can be written in the following way using Kracht's notation:
\[
\forall z (\forall x\rhd z)(\forall y\rhd z)(\exists w\rhd x)(yRw).
\] 
Using the equivalences above we can translate it in the language of ALBA as follows:
\begin{center}
\begin{tabular}{c l}
& $\forall z (\forall x\rhd z)(\forall y\rhd z)(\exists w\rhd x)(yRw)$\\
iff & $\forall x\forall z (zRx\Rightarrow \forall y(zRy\Rightarrow \exists w(xRw\ \&\ yRw)))$\\
%iff & $\forall x\forall z (zRx\Rightarrow \forall y(zRy\Rightarrow \exists w(w\leq \Diamondblack x\ \&\ y\leq \Diamond w)))$\\
iff & $\forall x\forall z (z \leq \Diamond x \Rightarrow \forall y(y \leq \Diamondblack z \Rightarrow \exists w(w\leq \Diamondblack x\ \&\ y\leq \Diamond w)))$\\
i.e. & $\forall \nomj\forall \nomi (\nomi \leq \Diamond \nomj \Rightarrow \forall \nomh(\nomh \leq \Diamondblack \nomi \Rightarrow \exists \nomk(\nomk\leq \Diamondblack \nomj\ \&\ \nomi\leq \Diamond \nomk)))$
\end{tabular}
\end{center}
\end{example} 

\begin{notation}
\label{notation:restrictedquant}
To better reflect the original Kracht's notation, it is useful to use the following notation for \emph{restricted quantifiers} in $\langmeta$:
\[
\begin{array}{rcl}
(\forall \overline\nomi,\overline\cnomn \rhd_f \nomj)\beta & \equiv  & (\forall \overline\nomi,\overline\cnomn)(\nomj \leq f(\overline\nomi,\overline\cnomn) \Rightarrow \beta)\,, \\
(\forall \overline\nomi,\overline\cnomn \rhd_g \cnomm)\beta & \equiv & (\forall \overline\nomi,\overline\cnomn)(g(\overline\nomi,\overline\cnomn) \leq \cnomm \Rightarrow \beta)\,, \\
(\exists \overline\nomi,\overline\cnomn \rhd_f \nomj)\beta & \equiv  & (\exists \overline\nomi,\overline\cnomn)(\nomj \leq f(\overline\nomi,\overline\cnomn) \ \metaand \ \beta)\,, \\
(\exists \overline\nomi,\overline\cnomn \rhd_g \cnomm)\beta & \equiv & (\exists \overline\nomi,\overline\cnomn)(g(\overline\nomi,\overline\cnomn) \leq \cnomm \ \metaand \ \beta)\,, \\
\end{array}
\]
where $f \in \mathcal{F}^*$ and $g \in \mathcal{G}^*$. We allow for residual operators in restricted quantifiers under the insight given by Example \ref{ex:goranko_backward_looking}. For instance, the $\langmeta$ formula
\[
\forall \nomi\forall \nomj (\nomi \leq \Diamond \nomj \Rightarrow \forall \nomh(\nomi \leq \Diamondblack \nomh \Rightarrow \exists \nomk(\nomj\leq \Diamondblack \nomk\ \&\ \nomi\leq \Diamond \nomk))),
\]
can be rewritten using this notation as,
\[
\forall \nomi(\forall \nomj \rhd_\Diamond \nomi)(\forall \nomh \rhd_\Diamondblack \nomi)(\exists \nomk \rhd_\Diamondblack \nomj) \, \nomi\leq \Diamond \nomk.
\]
\end{notation}

While it is straightforward to see that the atoms in Kracht's formulae matrices are restricting we will argue and prove that the \emph{same shape} can be kept in: expanding to inductive formulae by allowing for flat inequalities to appear in the matrix (cf.\ \cite{inverse_correspondence_tense} for a detailed survey of this phenomenon in the classical case); expanding to general DLE-logics in an uniform way across semantics by taking (co)nominals.

\subsection{Kracht DLE-formulae}

\begin{definition}[Kracht disjunct]
\label{def:succedentbranches}
A \emph{Kracht-DLE disjunct} is a formula $\theta(\purew)$ in $\langmeta$ defined inductively together with its \emph{main pure variable} $\purew\in \mathrm{NL} \cup \mathrm{CNL}$. It either is:
\begin{itemize}
	\item a flat inequality (cf. Definition \ref{def:restricting_inequality}) $s \leq \purew$ or $\purew \leq t$;
	\item $\theta_1(\purew) \metaand \theta_2(\purew) \& \cdots \& \theta_n(\purew)$ where all the $\theta_i$ (with $1 \leq i \leq n$) are Kracht disjuncts,
	\item  $\theta_1(\purew) \parr \theta_2(\purew) \parr \cdots \parr \theta_n(\purew)$ where all the $\theta_i$ (with $1 \leq i \leq n$) are Kracht disjuncts,
	\item $(\exists \ \overline\pureu \rhd_h \purew) \bigmetaand_i \theta_i(\pureu_i)$, or $(\forall \ \overline\pureu \rhd_h  \dneg\purew)\bigmetaor_i \theta_i(\dneg\pureu_i)$, where each $\theta_i$ is a Kracht disjunct where $\purew$ does not occur, and $h \in \mathcal{F}^* \cup \mathcal{G}^*$\footnote{Note that our restricted quantifier notation implies that the types of the pure variables in $\overline\pureu$ can be deduced by the order type and type of $h$.}.
\end{itemize}
Furthermore, in the generation trees of all the flat inequalities of $\theta(\purew)$, each nominal (resp.\ conominal) different from $\purew$ occurs in negative (resp.\ positive) polarity if it is under the scope of an even number of universal quantifiers, the opposite otherwise.
\end{definition}

\begin{definition}
\label{def:dleantecedent}
A \emph{Kracht-DLE antecedent} is an $\langmeta$-formula $\eta(\nomj,\cnomm)$ which is a conjunction of inequalities of the form $\nomi \leq \nomh$ and $\cnomo \leq \cnomn$, plus a single negated inequality $\nomj \nleq \cnomm$ called a \emph{pivotal inequality}; the variables $\nomj$ and $\cnomm$ are the \emph{pivotal pure variables} of the antecedent.
\end{definition}

\begin{definition}[Kracht DLE-formula]
\label{def:dlekracht}
A closed $\langmeta$-formula is \emph{Kracht DLE} if it is of the following shape:
\begin{equation}
\label{eqn:kracht_shape}
\forall\nomj\forall\cnomm\forall\overline\nomh\forall\overline\cnomo\forall^R\overline\nomi,\overline\cnomn(\eta(\nomj,\cnomm) \Rightarrow \theta_1(\purew_1) \metaor \cdots \metaor \theta_n(\purew_n) ),
\end{equation}
where $\eta(\nomj,\cnomm)$ is a Kracht antecedent, each $\theta_i$ is a Kracht disjunct, and $\forall^R\overline\nomi,\overline\cnomn$ denotes a sequence of restricted universal quantifiers introducing the (co)nominals in $\overline\nomi$ and $\overline\cnomn$. The variables quantified in the prefix are \emph{inherently universal} variables. The formula has also to satisfy the following conditions:
\begin{enumerate} 
	\item each nominal in $\overline\nomh$ (or, resp., conominal in $\overline\cnomo$) must appear on the right (resp.\ left) hand side of exactly one non-pivotal inequality in $\eta(\nomj,\cnomm)$,
	\item the non-main variables (cf.\ Definition \ref{def:succedentbranches}) in each atom in the consequent are all inherently universal,
	\item each $\purew$ has shape $\dneg\pureu$ with $\pureu \in \mathrm{NOM}\cup\mathrm{CONOM}$, 
	\item quantifiers in $\forall^R\overline\nomi,\overline\cnomn$ must be of either of the following types: \emph{type 1}  quantifiers bind variables occurring in the consequent, but not in the antecedent or as restrictors in the prefix;  \emph{type 2} quantifiers bind variables that occur either in the antecedent or as restrictors (exactly once) in the prefix, but not in the consequent.
\end{enumerate}
\end{definition}

\begin{remark}
It follows from Definitions  \ref{def:dleantecedent} and \ref{def:dlekracht}(i) that the variables in $\overline\nomh$ and $\overline\cnomo$ provide alternative names for either pivotal variables or restricted variables in the prefix. This is why we will sometimes refer to them as \emph{aliases}.
\end{remark}

\begin{lemma}
\label{lemma:kracht_pivotal_not_in_succedent}
Every Kracht-DLE formula is equivalent to some Kracht-DLE formula where the pivotal variables do not occur in the consequent.
\end{lemma}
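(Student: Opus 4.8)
The plan is to clear the pivotal variables out of the consequent one occurrence at a time by an \emph{aliasing} argument, inducting on the total number of occurrences of $\nomj$ and $\cnomm$ in $\theta_1(\purew_1)\metaor\cdots\metaor\theta_n(\purew_n)$. The base case (no occurrences) is exactly the desired conclusion, so it suffices to eliminate a single occurrence while preserving both logical equivalence and the Kracht-DLE shape of Definition~\ref{def:dlekracht}. The idea in each step is to introduce a fresh inherently universal variable in the alias block $\forall\overline\nomh$ or $\forall\overline\cnomo$, bound on the appropriate side by the pivotal variable, record that bound as a new non-pivotal inequality in the antecedent $\eta$, and substitute the fresh variable for the chosen occurrence, so that the fresh variable becomes a genuine alias in the sense of the Remark preceding this lemma.

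The elimination step splits according to the polarity of the chosen occurrence, which by the polarity clause of Definition~\ref{def:succedentbranches} is fixed by the parity of the universal-quantifier depth at which the occurrence sits. Writing $C$ for the consequent, I treat the four cases uniformly, using the order-isomorphisms $\kappa,\lambda$ behind $\dneg$ to place each new inequality in one of the admissible antecedent shapes $\nomi\leq\nomh$ or $\cnomo\leq\cnomn$ of Definition~\ref{def:restricting_inequality}: a positive $\nomj$ is bounded from above by a fresh nominal $\nomh_0$ via $\nomj\leq\nomh_0$, substituting $\nomh_0$; a negative $\nomj$ is bounded from below via $\cnomo_0\leq\dneg\nomj$ (equivalently $\dneg\cnomo_0\leq\nomj$) for a fresh conominal $\cnomo_0$, substituting $\dneg\cnomo_0$; a positive $\cnomm$ is bounded from above via $\dneg\cnomm\leq\nomh_0$ (equivalently $\cnomm\leq\dneg\nomh_0$), substituting $\dneg\nomh_0$; and a negative $\cnomm$ is bounded from below via $\cnomo_0\leq\cnomm$, substituting $\cnomo_0$. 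In every case the substituted term has the same type (nominal or conominal) as the variable it replaces.

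For the equivalence, each modified formula replaces a conjunct $\eta\Rightarrow C$ of the original by $\forall v\,(\eta\metaand(\text{new bound})\Rightarrow C')$, where $v$ is the fresh variable and $C'$ is $C$ with the single substitution performed. One direction is immediate: instantiating $v$ so that the new bound collapses to a reflexive inequality (e.g.\ $v:=\nomj$ for $\nomj\leq\nomh_0$, or $\cnomo_0:=\dneg\nomj$ for $\cnomo_0\leq\dneg\nomj$) recovers the original conjunct by reflexivity of $\leq$. The converse uses that a Kracht disjunct is monotone in each of its non-main $(\mathrm{co})$nominal occurrences, with the direction of monotonicity dictated precisely by the polarity convention; hence from $C$ together with the new bound — which pushes the substituted term in the monotone-favourable direction — one obtains $C'$. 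I would then verify that the output is again Kracht-DLE: the fresh variable is inherently universal, occurs exactly once in $C'$ and on the correct side of exactly one new non-pivotal inequality (maintaining Definition~\ref{def:dlekracht}(i)), the remaining non-main variables are untouched and stay inherently universal (ii), the main variables $\purew_i=\dneg\pureu_i$ are unchanged (iii), and since the new quantifier is an ordinary $\forall$ in the alias block, the type-1/type-2 discipline (iv) on the restricted block $\forall^R\overline\nomi,\overline\cnomn$ is not disturbed. Iterating removes every occurrence of $\nomj$ and $\cnomm$ from the consequent, leaving them only in the pivotal inequality.

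The main obstacle, and the part I would write out in full, is twofold. First, the monotonicity of Kracht disjuncts in their non-main $(\mathrm{co})$nominals, with direction matching Definition~\ref{def:succedentbranches}, must be established not only for the flat inequalities but also for the \emph{restrictor} occurrences generated by the restricted quantifiers $\rhd_h$, where each quantifier layer inverts the sense and so accounts for the parity bookkeeping in the polarity clause. Second, the type-crossing in the negative-nominal and positive-conominal cases requires care: one must confirm that substituting $\dneg\cnomo_0$ for $\nomj$ (respectively $\dneg\nomh_0$ for $\cnomm$) still yields admissible flat inequalities and respects conditions (i)--(iii), which is where the mutual inverseness and order-preservation of $\kappa$ and $\lambda$ are used. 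The remaining verifications are routine.
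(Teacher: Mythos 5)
Your proposal takes a genuinely different and much heavier route than the paper, and as written it has two real gaps. The paper's proof does not remove anything from the consequent at all: it simply \emph{re-designates} which variables are pivotal. One introduces fresh $\nomj',\cnomm'$, adds the non-pivotal inequalities $\nomj'\leq\nomj$ and $\cnomm\leq\cnomm'$ to the antecedent, and replaces the pivotal inequality by $\nomj'\nleq\cnomm'$; the old $\nomj,\cnomm$ are demoted to aliases (each now occurs on the correct side of exactly one non-pivotal inequality, as Definition \ref{def:dlekracht}(1) demands), and the new pivotal variables, being fresh, trivially do not occur in the consequent. Equivalence is immediate: one direction instantiates $\nomj':=\nomj$ and $\cnomm':=\cnomm$; for the other, $\nomj\leq\cnomm$ together with $\nomj'\leq\nomj$ and $\cnomm\leq\cnomm'$ would give $\nomj'\leq\cnomm'$, so the new antecedent entails $\nomj\nleq\cnomm$. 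No monotonicity lemma, no per-occurrence surgery, no $\kappa/\lambda$ type-crossing is needed.

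The gaps in your approach are concrete. First, pivotal variables may occur in the consequent as (the core of) a disjunct's \emph{main} variable: Definition \ref{def:dlekracht}(3) only requires $\purew=\dneg\pureu$ with $\pureu\in\mathrm{NOM}\cup\mathrm{CONOM}$, so $\pureu$ may be pivotal, and the paper's Example \ref{eg:secondgoranko} realizes exactly this with the disjunct $\lambda(\cnomm)\leq\nomh_2$ for pivotal $\cnomm$. Your monotonicity argument is keyed to the polarity clause of Definition \ref{def:succedentbranches}, which explicitly exempts the main variable, so these occurrences are not covered; moreover the main variable can occur several times in one disjunct with \emph{mixed} monotonicity (e.g.\ both $s\leq\purew$ and $\purew\leq t$ as conjuncts, plus restrictor occurrences in $\rhd_h$), so no single one-sided bound suffices, while replacing distinct occurrences by distinct fresh aliases destroys the single-main-variable format that Definition \ref{def:succedentbranches} imposes on conjunctions of disjuncts. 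Second, your polarity bookkeeping is inverted relative to the paper's convention: a nominal occurring \emph{negatively} in a flat inequality's signed generation tree makes its truth upward monotone, so it is the negative occurrences that admit the bound $\nomj\leq\nomh_0$ with substitution of $\nomh_0$, and the positive (antitone) ones that need a lower bound; as stated, your four cases break the "original implies new" direction. Relatedly, the claim that "each quantifier layer inverts the sense" of monotonicity is false semantically — quantifying over \emph{other} variables never flips monotonicity in a free variable; the parity clause of Definition \ref{def:succedentbranches} is syntactic bookkeeping for the compaction algorithm (cf.\ Lemma \ref{lemma:polarities_in_gammadelta}), not a semantic inversion. Your scheme is likely repairable by per-occurrence case analysis at the atom level plus special handling of main-variable occurrences, but the paper's renaming trick makes all of this unnecessary.
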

\begin{proof}
Any Kracht formula has the following form
\[
\forall\nomj\forall\cnomm\forall\overline\nomh\forall\overline\cnomo\forall^R\overline\nomi,\overline\cnomn(\eta' \metaand \nomj \nleq \cnomm \Rightarrow \theta_1(\purew_1) \metaor \cdots \metaor \theta_n(\purew_n) ),
\]
and hence it can be equivalently rewritten as the following Kracht-DLE formula
\[
\forall\nomj'\forall\cnomm'\forall\nomj\forall\cnomm\forall\overline\nomh\forall\overline\cnomo\forall^R\overline\nomi,\overline\cnomn(\eta' \metaand \nomj' \leq \nomj \metaand \cnomm \leq \cnomm' \metaand \nomj' \nleq \cnomm' \Rightarrow \theta_1(\purew_1) \metaor \cdots \metaor \theta_n(\purew_n) ),
\] 
where $\nomj'$ and $\cnomm'$ are fresh variables, and therefore they do not occur in the consequent. The variables $\nomj$ and $\cnomm$ become part of the $\overline\nomh$ and $\overline\cnomo$ respectively of the new formula.
\end{proof}

\begin{lemma}
\label{lemma:kracht_aliases_in_succedent}
Any Kracht formula is equivalent to some Kracht formula such that each alias variable occurs in the succedent.
\end{lemma}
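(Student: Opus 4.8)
The plan is to show that any alias failing to occur in the succedent is redundant and may be deleted, so that iterating the deletion leaves a formula in which every surviving alias occurs in the succedent; combined with preservation of logical equivalence and of the Kracht-DLE shape, this yields the claim. First I would isolate the relevant structure: by Definition \ref{def:dleantecedent} together with condition (1) of Definition \ref{def:dlekracht}, a nominal alias $\nomh\in\overline\nomh$ occurs in the antecedent exactly once, as the right-hand side of a unique non-pivotal inequality $\purev\leq\nomh$, and dually a conominal alias $\cnomo\in\overline\cnomo$ occurs exactly once, as the left-hand side of a unique $\cnomo\leq\purev$; in each case the companion $\purev$ is a pivotal or a restricted variable. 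Hence an alias that does not occur in the succedent occurs in the whole formula solely in its own defining inequality.

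Next I would perform the deletion. For such a nominal alias $\nomh$, write the matrix as $\eta'\metaand(\purev\leq\nomh)\Rightarrow D$ with $\nomh$ absent from $\eta'$ and from the disjunction $D$, and set $\Phi := (\eta'\Rightarrow D)$. Because $\nomh$ ranges over nominals and $\purev$ is itself a nominal, the restriction $\purev\leq\nomh$ is satisfiable (instantiate $\nomh:=\purev$, using $\purev\leq\purev$), so
\[
\forall\nomh\bigl(\purev\leq\nomh\Rightarrow\Phi\bigr)\;\equiv\;\Phi .
\]
Thus dropping $\forall\nomh$ from the prefix and deleting the conjunct $\purev\leq\nomh$ from the antecedent produces an equivalent formula; the conominal case is order-dual. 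One then checks that the result is again Kracht-DLE: the antecedent remains a conjunction of admissible inequalities with its pivotal inequality untouched, the succedent is unchanged, and condition (1) persists for the remaining aliases, whose defining inequalities are disjoint from the deleted one. Iterating over all aliases outside the succedent gives the required formula.

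The main obstacle, and the technical heart of the argument, is the status of the companion variable $\purev$ after its defining inequality has been deleted. If $\purev$ is pivotal it survives in the pivotal inequality and nothing further is needed. If $\purev$ is a restricted variable whose only antecedent occurrence was the deleted inequality and which is not itself a restrictor, then after deletion $\purev$ occurs nowhere in the matrix, clashing with the type-1/type-2 dichotomy of condition (4); and one cannot simply discard the restricted quantifier binding $\purev$, since for a genuine restricting connective $h$ the defining restriction of $(\forall\purev\rhd_h\purew)$ need not be satisfiable, so that $(\forall\purev\rhd_h\purew)\Phi$ with $\purev$ absent from $\Phi$ is in general strictly weaker than $\Phi$. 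I would resolve this by processing companions along the (well-founded) restrictor order and, whenever a restricted companion is orphaned, recording the residual non-reachability condition $\metanot\exists\purev(\ldots)$ as a fresh succedent Kracht disjunct of the restricted-universal form with empty body (an empty meta-disjunction), which restores well-formedness while preserving equivalence. Verifying that this bookkeeping re-establishes all four conditions of Definition \ref{def:dlekracht} at every step is where the real work lies.
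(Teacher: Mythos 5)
Your first two paragraphs reproduce the paper's proof exactly: the paper rewrites the universal quantifier binding the alias as an existential inside the antecedent (sound because the alias occurs nowhere else) and observes that $\exists \nomh\,(\nomk_\nomh \leq \nomh)$ is equivalent to $\top$, which is precisely your satisfiability-by-instantiation argument $\nomh := \purev$; the defining conjunct is then deleted, and the conominal case is dual.

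Your third paragraph is where you depart from the paper, whose proof stops at the deletion and says nothing about the companion variable. Your worry is legitimate under a literal reading of Definition \ref{def:dlekracht}(4): if the companion $\purev$ is a restricted variable whose sole occurrence was the deleted inequality, its binder is afterwards neither of type 1 nor of type 2, and you are also right that a restricted universal quantifier binding a variable absent from the matrix cannot simply be discarded, since $(\forall \purev \rhd_h \purew)\Phi$ is strictly weaker than $\Phi$ when the restriction is unsatisfiable --- indeed the paper elides this very point in the sentence following the lemma, where such quantifiers are declared removable. However, your proposed repair is only a sketch and does not obviously type-check against the definitions: Definition \ref{def:succedentbranches} does not generate an empty meta-disjunction (its base case is a flat inequality, not $\bot$), so $(\forall \purev \rhd_h \purew)\bot$ is not literally a Kracht disjunct, and the orphaned quantifier's restrictor would still need to be brought into the shape $\dneg\pureu$ required by Definition \ref{def:dlekracht}(3); you yourself flag the re-verification of all four conditions as open. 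In sum, the stated lemma is established by your first two paragraphs, identically to the paper; the extra machinery addresses a corner case that the paper's own proof silently assumes away, but as written it is an honest, unfinished extension rather than a completed argument.
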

\begin{proof}
Suppose that an alias nominal $\nomh$ (resp.\ conominal $\cnomo$) does not occur in the succedent. By definition of Kracht formulas, it occurs exactly once in the antecedent in an inequality of shape $\nomk_\nomh \leq \nomh$ (resp.\ $\cnomo \leq \cnoml_\cnomo$) respectively. As it does not occur in the succedent, the universal quantifier that introduces it can be rewritten as an existential quantifier in the antecedent. Now the formula
$\exists \nomh(\nomk_\nomh \leq \nomh)$ (resp.\ $\exists \cnomo (\cnomo \leq \cnoml_\cnomo)$) is equivalent to $\top$, and, therefore it can be eliminated from the antecedent.
\end{proof}

Thanks to Lemmas \ref{lemma:kracht_pivotal_not_in_succedent} and \ref{lemma:kracht_aliases_in_succedent}, we will henceforth consider only  Kracht formulas where the pivotal variables do not occur in the succedent and whose unrestricted non-pivotal variables occur in the succedent. We will also assume that the variables introduced by \emph{type 1} restricted quantifier occur in the succedent, since, otherwise, the formula would be equivalent to the same formula without those quantifiers. We refer to such formulas as \emph{refined Kracht formulas}.

\begin{example} The following formula
\label{eg:goranko}
\[
\begin{array}{l}
\forall  \nomj \forall  \cnomm \forall  \nomh_{1}  \forall  \nomh_{2}    [ \nomj  \leq  \nomh_{1}  \ \&\  \nomj  \leq  \nomh_{2}  \ \&\  \nomj \nleq \cnomm \Rightarrow \hspace{6cm} \\ \hfill
(\exists   \nomi_{1}   \rhd_\Diamond  \lambda(\cnomm)  )  (\forall   \cnomn_{1}   \rhd_\Box  \kappa(\nomi_{1})  )  (\forall   \cnomn_{2}   \rhd_\Box  \cnomn_{1} )  (\exists   \nomi_{2}   \rhd_\Diamondblack \lambda(\cnomn_{2})  ) ( \nomi_{2}  \leq \Diamond  \nomh_{1}  \ \&\  \nomi_{2}  \leq \Diamondblack  \nomh_{2}) ]
\end{array}
\]
is Kracht with pivotal variables $\nomj$ and $\cnomm$, aliases $\nomh_1$ and $\nomh_2$, and a single Kracht disjunct. Indeed, in $\nomi_2 \leq \Diamond \nomh_1$ and $\nomi_2 \leq \Diamondblack\nomh_2$, $\nomh_1$ and $\nomh_2$ are inherently universal and they occur in negative polarity while being under the scope of an even number of universal quantifiers. By Lemma \ref{lemma:kracht_pivotal_not_in_succedent} and by renaming $\cnomm$ to $\cnomo_1$, it is equivalent to the following refined Kracht formula:
\begin{equation}
\begin{array}{l}
\forall  \nomj \forall  \cnomm \forall  \nomh_{1}  \forall  \nomh_{2}   \forall \cnomo_1 [ \nomj  \leq  \nomh_{1}  \ \&\  \nomj  \leq  \nomh_{2}  \metaand \cnomo_1 \leq \cnomm \ \&\  \nomj \nleq \cnomm \Rightarrow \hspace{3.8cm} \\ \hfill
(\exists   \nomi_{1}   \rhd  \lambda(\cnomo_1)  )  (\forall   \cnomn_{1}   \rhd_\Diamond  \kappa(\nomi_{1})  )  (\forall   \cnomn_{2}   \rhd_\Diamond  \cnomn_{1} )  (\exists   \nomi_{2}   \rhd_\Diamondblack \lambda(\cnomn_{2})  ) ( \nomi_{2}  \leq \Diamond  \nomh_{1}  \ \&\  \nomi_{2}  \leq \Diamondblack  \nomh_{2}) ]
\end{array}
\end{equation}
\end{example}

\begin{example}
\label{eg:secondgoranko}
The Kracht formula
\[
\begin{array}{l}
\forall \nomj \forall \cnomm \forall \nomh_1\forall \nomh_2  (\forall \nomi_1 \rhd_\Diamond \nomj)(\forall \cnomn_1 \rhd_\Box \cnomm) [\nomi_1 \leq \nomh_1 \metaand \nomi_1 \leq \nomh_2 \metaand \nomj \nleq \cnomm \Rightarrow \hspace{3cm}\\
\hfill \lambda(\cnomm) \leq \nomh_2 \metaor (\exists \nomi_2 \rhd \lambda(\cnomn_1))(\forall \cnomn_2 \rhd_\Box \kappa(\nomi_2))(\lambda(\cnomn_2) \leq \Diamond\nomh_1)]
\end{array}
\]
is equivalent to the following refined Kracht formula (introducing an alias for $\cnomm$)
\begin{equation}
\begin{array}{l}
\forall \nomj \forall \cnomm \forall \nomh_1\forall \nomh_2  \forall \cnomo_1(\forall \nomi_1 \rhd_\Diamond \nomj)(\forall \cnomn_1 \rhd_\Box \cnomm) [ \nomi_1 \leq \nomh_1 \metaand \nomi_1 \leq \nomh_2 \metaand \cnomo_1 \leq \cnomm \metaand \nomj \nleq \cnomm \Rightarrow \hspace{0.8cm}\\
\hfill \lambda(\cnomo_1) \leq \nomh_2 \metaor (\exists \nomi_2 \rhd_\Diamond \lambda(\cnomn_1))(\forall \cnomn_2 \rhd_\Box \kappa(\nomi_2))(\lambda(\cnomn_2) \leq \Diamond\nomh_1)]
\end{array}
\end{equation}
\end{example}

\begin{example}
\label{eg:full_lambek}
Consider the language of distributive full Lambek calculus with $\mathcal{F} = \{e, \circ \}$ with $n_e = 0$, $n_\circ = 2$, $\varepsilon_\circ = (1,1)$, and $\mathcal{G} = \{\slash, \backslash\}$ with $n_\slash = n_\backslash = 2$, $\varepsilon_\slash = (1,\partial)$, and $\varepsilon_\backslash = (\partial,1)$. The following formula
\[
\begin{array}{l}
\forall \nomj \forall \cnomm
\forall \nomh_1
(\forall \cnomn_1,\nomi_1 \rhd_\slash \cnomm)
(\forall \nomi_2,\cnomn_2 \rhd_\backslash \cnomn_1)[
\nomi_1 \leq \nomh_1 \metaand
\nomj \nleq \cnomm
\Rightarrow \hspace{3.4cm} \\
\hfill
(\exists \nomi_3,\nomi_4 \rhd_\circ \lambda(\cnomn_2))
(
\nomi_3 \leq \nomi_2
\metaand
(\forall \cnomn_5,\nomi_5 \rhd_\slash \kappa(\nomi_4))
(
\lambda(\cnomn_5) \leq \nomi_2 \circ \nomj
\metaand
\nomi_5 \leq \nomj
)
)
],
\end{array}
\]
is Kracht and by Lemma \ref{lemma:kracht_pivotal_not_in_succedent} and by renaming $\nomj$ to $\nomh_2$, it is equivalent to
\[
\begin{array}{l}
\forall \nomj \forall \cnomm
\forall \nomh_1 \forall \nomh_2
(\forall \cnomn_1,\nomi_1 \rhd_\slash \cnomm)
(\forall \nomi_2,\cnomn_2 \rhd_\backslash \cnomn_1)[
\nomi_1 \leq \nomh_1 \metaand
\nomj \leq \nomh_2 \metaand
\nomj \nleq \cnomm
\Rightarrow \hspace{1.7cm} \\
\hfill
(\exists \nomi_3,\nomi_4 \rhd_\circ \lambda(\cnomn_2))
(
\nomi_3 \leq \nomi_2
\metaand
(\forall \cnomn_5,\nomi_5 \rhd_\slash \kappa(\nomi_4))
(
\lambda(\cnomn_5) \leq \nomi_2 \circ \nomh_2
\metaand
\nomi_5 \leq \nomh_2
)
)
],
\end{array}
\]
which is a refined Kracht DLE-formula.
\end{example}

\subsection{From inductive to Kracht-DLE}
\label{ssec:inductivetokracht}
Before presenting an algorithm for inverse correspondence that accepts Kracht-DLE formulae in input, we show that the first order correspondent of any inductive formula is equivalent to some refined Kracht-DLE formula.

Given a definite inductive formula $(\varphi \leq \psi)[\overline\alpha,\overline x, \overline\beta/\overline y, \overline\gamma / \overline z, \overline\delta/\overline w]$, by Proposition \ref{prop:ALBA_output_in_ackermann} running ALBA on it yields
\begin{equation}
\label{eq:afterAckermannanind}
\begin{array}{l}
 \forall\overline{\nomj} \forall\overline{\cnomm}\forall\overline{\nomi}\forall\overline{\cnomn}\Big(
 \overline{\nomi}\leq \overbrace{\overline{\gamma}\left[\overline{\bigvee\mathsf{Mv}(p)}/\overline{p}, \overline{\bigwedge\mathsf{Mv}(q)}/\overline{q}\right]}^{\overline{\gamma}^{mv}} \ \&\ 
\overbrace{\overline{\delta}\left [\overline{\bigvee\mathsf{Mv}(p)}/\overline{p}, \overline{\bigwedge\mathsf{Mv}(q)}/\overline{q}\right]}^{{\overline\delta}^{mv}}\leq \overline{\cnomn}
\Rightarrow \\ \hfill (\varphi\leq \psi)[!\overline{\nomj}/!\overline{x}, !\overline{\cnomm}/!\overline{y},!\overline{\nomi}/!\overline{z}, !\overline{\cnomn}/!\overline{w}] \Big),
 \end{array}
\end{equation}
which is equivalent to its contrapositive
\begin{equation}
\label{eq:eqnbeforestrip}
 \forall\overline{\nomj} \forall\overline{\cnomm}\forall\overline{\nomi}\forall\overline{\cnomn}\left(
 (\varphi\nleq \psi)[!\overline{\nomj}/!\overline{x}, !\overline{\cnomm}/!\overline{y},!\overline{\nomi}/!\overline{z}, !\overline{\cnomn}/!\overline{w}] \Rightarrow 
 \bigmetaor_{i=1}^n \gamma_{i}^{mv} \leq \dneg\nomi_i \ \metaor \  \bigmetaor_{i=1}^m \dneg\cnomn_i \leq \delta_{i}^{mv} 
 \right).
\end{equation}

We put $(\varphi' \leq \psi') \equiv (\varphi\leq \psi)[!\overline{\nomj}/!\overline{x}, !\overline{\cnomm}/!\overline{y},!\overline{\nomi}/!\overline{z}, !\overline{\cnomn}/!\overline{w}] $, by approximating the antecedent (Corollary \ref{cor:firstapproxnomcnom}) we have:
\begin{equation}
\label{eq:eqnbeforestrippivotalinserted}
\forall\nomj' \forall\cnomm' \forall\overline{\nomj} \forall\overline{\cnomm}\forall\overline{\nomi}\forall\overline{\cnomn}\left(
\nomj' \leq \varphi' \metaand \psi' \leq \cnomm' \metaand \nomj' \nleq \cnomm' \Rightarrow 
 \bigmetaor_{i=1}^n \gamma_{i}^{mv} \leq \dneg\nomi_i \ \metaor \  \bigmetaor_{i=1}^m \dneg\cnomn_i \leq \delta_{i}^{mv} 
 \right).
\end{equation}
The variables $\nomj'$ and $\cnomm'$ will be the pivotal pure variables of the inductive Kracht formula that we will compute. In the remainder of the subsection we will show that \eqref{eq:eqnbeforestrippivotalinserted} is equivalent to some refined Kracht-DLE formula.

\begin{lemma}
\label{lemma:inductivetokracht_succedent}
Each $\gamma^{mv}_i \leq \dneg\nomi_i$ and $\dneg\cnomn_i \leq \delta^{mv}_i$ in \eqref{eq:eqnbeforestrippivotalinserted} is equivalent to some Kracht-DLE disjunct where all the non-main variables in each atom are variables in $\gamma_i^{mv}$ and $\delta_i^{mv}$ respectively.
\end{lemma}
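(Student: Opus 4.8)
The plan is to prove the statement by induction on the complexity of the $\mathcal{L}^+$-formula $\gamma_i^{mv}$, treating the $\delta_i^{mv}$ case as order-dual (interchanging nominals with conominals, $\wedge$ with $\vee$, $\mathcal{F}^*$ with $\mathcal{G}^*$, and $\leq$-direction throughout). Writing $\dneg\nomi_i = \kappa(\nomi_i)$, which is a conominal, the target inequality reads $\gamma_i^{mv} \leq \kappa(\nomi_i)$, and I aim to exhibit a Kracht-DLE disjunct equivalent to it whose main variable is exactly $\purew := \kappa(\nomi_i)$, so that clause (iii) of Definition \ref{def:dlekracht} ($\purew = \dneg\pureu$ with $\pureu = \nomi_i \in \mathrm{NOM}$) holds. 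Since the leaves of $\gamma_i^{mv}$ are precisely the (co)nominals carried by the minimal valuations $\bigvee\mathsf{Mv}(p)$ and $\bigwedge\mathsf{Mv}(q)$, I would actually prove the slightly stronger, self-dual invariant: for every subformula $\mu$ of $\gamma_i^{mv}$ and every fresh pure variable $\purew$ of matching type, the inequality $\mu \leq \purew$ (when $\purew \in \cnomset$) or $\purew \leq \mu$ (when $\purew \in \nomset$) is equivalent to a Kracht disjunct with main variable $\purew$. This strengthening is what permits the induction to descend into the arguments of connectives, switching sides as polarities dictate.

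The base case is immediate: if $\mu$ is a pure variable $\pureu$, then $\mu \leq \purew$ is already flat and is a Kracht disjunct. The inductive step splits on the outermost connective of $\mu$. For the lattice connectives I would invoke that in a perfect DLE conominals are completely meet-prime and nominals completely join-prime (as exploited in Lemma \ref{lemma:Existential_Ackermann_with_nominals_and_conominals}): on the conominal side, $\mu_1 \vee \mu_2 \leq \cnomn$ is equivalent to $(\mu_1 \leq \cnomn) \metaand (\mu_2 \leq \cnomn)$ by the universal property of joins, giving the conjunction constructor, whereas $\mu_1 \wedge \mu_2 \leq \cnomn$ is equivalent to $(\mu_1 \leq \cnomn) \metaor (\mu_2 \leq \cnomn)$ by meet-primeness of $\cnomn$, giving the disjunction constructor; both subdisjuncts retain the main variable $\cnomn$, and the nominal side is dual via join-primeness.

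For an outermost connective $h \in \mathcal{F}^* \cup \mathcal{G}^*$, say $h = g$ with $g(\overline\mu) \leq \cnomn$, I would apply Lemma \ref{lemma:Existential_Ackermann_with_nominals_and_conominals} (with $\psi := g$ applied to placeholders, a single-node definite Skeleton formula): it introduces fresh (co)nominals $\overline\pureu$ and rewrites the inequality as $(\exists\,\overline\pureu \rhd_g \cnomn)\bigmetaand_k \theta_k(\pureu_k)$, where each argument yields a subinequality $\mu_k \leq \cnomm_k$ in an order-preserving coordinate (fresh conominal main variable) and $\nomj_k \leq \mu_k$ in an order-reversing coordinate (fresh nominal main variable), using Lemma \ref{lemma:flipnegation} to align the inequality directions. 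Each $\theta_k$ is then a Kracht disjunct by the inductive hypothesis applied to $\mu_k$ against $\pureu_k$, and the whole expression is exactly the restricted-quantifier constructor of Definition \ref{def:succedentbranches}. Applying the invariant to $\mu := \gamma_i^{mv}$ and $\purew := \kappa(\nomi_i)$ then yields the lemma.

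Finally I would discharge the side conditions. The main variable is $\purew$ by construction, and the non-main pure variables occurring at the $\theta$-leaf flat inequalities are precisely the (co)nominals already present in $\gamma_i^{mv}$ (every freshly introduced $\overline\pureu$ occurs bound by its own restricted quantifier, i.e.\ as restrictor and as main variable of an inner disjunct), which is the assertion of the statement. I expect the genuine obstacle to be the parity/polarity clause of Definition \ref{def:succedentbranches}: that each non-main nominal (resp.\ conominal) occurs in negative (resp.\ positive) polarity exactly when it lies under an even number of universal quantifiers. The delicacy is that each order-reversing ($\partial$) coordinate peeled by the existential-Ackermann step simultaneously flips the polarity of the enclosed leaves, swaps the nominal/conominal role of the newly introduced restricted variable, and, through the $\kappa/\lambda$ translation of Lemma \ref{lemma:flipnegation}, may toggle between the existential form and the universal form $(\forall\,\overline\pureu \rhd_h \dneg\purew)\bigmetaor_i \theta_i(\dneg\pureu_i)$; the crux is to show these effects move in lockstep so that the enclosing-quantifier parity and the leaf polarities stay correlated as required. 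I would carry an explicit polarity-parity invariant alongside the main induction to settle this.
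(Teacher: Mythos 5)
Your proposal is correct and follows essentially the same route as the paper's proof: a structural induction establishing the generalized invariant that $\nomj \leq \theta$ (resp.\ $\theta \leq \cnomm$) is equivalent to a Kracht-DLE disjunct, with flat inequalities as base case, splitting/primeness for the lattice connectives, Corollary \ref{lemma:flattening_skeleton_formulas} to introduce the restricted quantifiers, and the $\kappa$/$\lambda$ flip of Lemma \ref{lemma:flipnegation} together with meta-negation for the contravariant cases. The ``polarity--parity lockstep'' you flag as the remaining delicacy is exactly how the paper discharges the polarity clause of Definition \ref{def:succedentbranches} --- a universal quantifier is introduced precisely when the meta-negation flips all polarities in the matrix --- so what you defer to a separate invariant is settled by a one-line remark in the paper's proof.
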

\begin{proof}
We prove that given any formula $\theta$, nominal $\nomj$ (resp.\ conominal $\cnomm$) $\nomj \leq \theta$ (resp.\ $\theta \leq \cnomm$) is equivalent to a Kracht-DLE disjunct where the non-main variables in each atom are variables in $\theta$. It will follow, as $\dneg\nomi_i$ and $\dneg\cnomn_i$ do not occur in $\gamma_i^{mv}$ and $\delta_i^{mv}$ respectively, that also each $\gamma^{mv}_i \leq \dneg\nomi_i$ and $\dneg\cnomn_i \leq \delta^{mv}_i$ is a Kracht-DLE disjunct. We proceed by induction on the structure of $\theta$. The base cases are the ones in which the inequality is already flat, so let us consider the case such that $\theta = f(\alpha_1,\ldots,\alpha_n,\beta_1,\ldots,\beta_m)$ where the first $n$ coordinates of $f$ are the positive ones and where at least one of the arguments of $f$ is not pure. Inequalities of the form  $\nomj \leq f(\alpha_1,\ldots,\alpha_n,\beta_1,\ldots,\beta_m)$ can be treated either by splitting if $f\equiv\wedge$, yielding two Kracht-DLE disjuncts; or otherwise by applying Lemma \ref{lemma:flattening_skeleton_formulas} introducing new fresh variables $\nomh_1,\ldots,\nomh_n,\cnomn_1,\ldots,\cnomn_m$, yielding
\begin{equation}
\label{eqn:epspartackermannstrip}
\begin{array}{ll}
\exists\nomh_1,\ldots,\nomh_n\exists\cnomo_1,\ldots,\cnomo_m( & \nomh_1 \leq \alpha_1\ \metaand\ \ldots\ \metaand\ \nomh_n \leq \alpha_n\ \metaand\  \\
				& \beta_1 \leq \cnomo_1\ \metaand\ \ldots\ \metaand\ \beta_m \leq \cnomo_m\ \metaand\  \\
				& \nomj \leq f(\nomh_1,\ \ldots,\nomh_n,\cnomo_1,\ \ldots,\cnomo_m)\ )\,.
\end{array}
\end{equation}
Clearly $\nomj \leq f(\nomh_1,\ \ldots,\nomh_n,\cnomo_1,\ \ldots,\cnomo_m)$ is a restricting inequality by definition. By inductive hypothesis, all the other inequalities are 
equivalent to Kracht-$\mathrm{DLE}$ disjuncts;
hence, as \eqref{eqn:epspartackermannstrip} can be compactly rewritten as follows
\[ 
(\exists \overline\nomh, \overline\cnomo \rhd_f \nomj)(\bigmetaand_{j=1}^{n}{\nomh_j \leq \alpha_j} \ \& \ \bigmetaand_{j=1}^{m}{\beta_j \leq \cnomo_j} ), 
\]
and each nominal $\nomh_j$ (resp.\ conominal in $\cnomo_j$) does not occur in any of the formulas in $\alpha$ and $\beta$, also $\nomj \leq \theta$ is a Kracht-DLE disjunct\footnote{Note that we have not introduced universal quantifiers, so the polarity of the variables in $\theta$ respects the constraints in Kracht-DLE disjunct definition by inductive hypothesis.}. Since the non-pure variables of the formula are in the $\alpha_i$ and $\beta_i$ by inductive hypothesis, the non-pure variables have to be in $\theta$.
As for inequalities $\theta \leq \cnomm$, by applying the same argument,
\begin{center}
\begin{tabular}{rlr}
& $\theta \leq \cnomm$ & \\
iff & $\lambda(\cnomm) \nleq f(\alpha_1,\ \ldots,\alpha_n,\beta_1,\ \ldots,\beta_n)$ & Lemma \ref{lemma:flipnegation}\\
iff & $\metanot[ \lambda(\cnomm) \leq f(\alpha_1,\ \ldots,\alpha_n,\beta_1,\ \ldots,\beta_n) ]$ & property of $\metanot$ \\
iff & $\metanot(\exists \overline\nomh, \overline\cnomo \rhd_f \lambda(\cnomm))(\bigmetaand_{j=1}^{n}{\nomh_j \leq \alpha_j} \ \& \ \bigmetaand_{j=1}^{m}{\beta_j \leq \cnomo_j} )$ & Lemma \ref{lemma:flattening_skeleton_formulas} \\
iff & $(\forall \overline\nomh, \overline\cnomo \rhd_f \lambda(\cnomm))\metanot(\bigmetaand_{j=1}^{n}{\nomh_j \leq \alpha_j} \ \& \ \bigmetaand_{j=1}^{m}{\beta_j \leq \cnomo_j} )$ & Notation \ref{notation:restrictedquant}\\
iff & $(\forall \overline\nomh, \overline\cnomo \rhd_f \lambda(\cnomm))(\bigmetaor_{j=1}^{n}{\nomh_j \nleq \alpha_j} \metaor \bigmetaor_{j=1}^{m}{\beta_j \nleq \cnomo_j} )$ & property of $\metanot$ \\
iff & $(\forall \overline\nomh, \overline\cnomo \rhd_f \lambda(\cnomm))(\bigmetaor_{j=1}^{n}{\alpha_j \leq \kappa(\nomh_j)} \metaor  \bigmetaor_{j=1}^{m}{\lambda(\cnomo_j) \leq \beta_j} ).$ & Lemma \ref{lemma:flipnegation} 
\end{tabular}
\end{center}
As in the previous case, it is sufficient to note that the inductive hypothesis applies to all the inequalities in the matrix of the formula. Here the condition on the polarity of the non-main variables of the atoms is respected as a universal quantifier is introduced, but at the same time all the polarities are flipped in the inequalities in the matrix by the meta-negation.
The case in which $\theta =  g(\alpha_1,\ \ldots,\alpha_n,\beta_1,\ \ldots,\beta_n) $ can be treated similarly.
\end{proof}

Taking stock, the applications of Corollary \ref{lemma:flattening_skeleton_formulas} in the previous lemma are condensed in the following table using the restricted quantifier notation.

\begin{table}[h]
\begin{center}
\label{table:ackermann_restr_quant}
\begin{tabular}{|c|c|}
\hline
$\nomj \leq f(\overline\alpha,\overline\beta)$ &  $(\exists \overline\nomi, \overline\cnomn \rhd_f \nomj)(\bigmetaand_{i=1}^{n}{\nomi_i \leq \alpha_i} \ \& \ \bigmetaand_{i=1}^{n}{\beta_i \leq \cnomn_i} )$ \\
\hline
$g(\overline\alpha,\overline\beta) \leq \cnomm$ & $(\exists \overline\cnomn,\overline\nomi \rhd_g \cnomm)[\bigmetaand_{i=1}^{n}{\alpha_i\leq\cnomn_i} \ \& \ \bigmetaand_{i=1}^{n}{\nomi_i \leq \beta_i}]$ \\
\hline
$f(\overline\alpha,\overline\beta) \leq \cnomm$ & $(\forall \overline\nomi, \overline\cnomn \rhd_{f} \lambda(\cnomm))[\bigmetaor_{i=1}^n{\alpha\leq\kappa(\nomi)} \ \metaor \ \bigmetaor_{i=1}^m{\lambda(\cnomn)\leq\beta}]$ \\
\hline
$\nomj \leq g(\overline\alpha,\overline\beta)$ & $(\forall \overline\cnomn, \overline\nomi \rhd_{g} \kappa(\nomj))[\bigmetaor_{i=1}^n{\lambda(\cnomn_i) \leq \alpha} \ \metaor \ \bigmetaor_{i=1}^m{\beta\leq\kappa(\nomi_i)}]$ \\
\hline
\end{tabular}
\end{center}
\caption{Corollary \ref{lemma:flattening_skeleton_formulas} with restricted quantifiers}
\end{table}

\begin{lemma}
\label{lemma:inductivetokracht_antecedent}
The inequalities $\nomj'\leq \varphi'$ and $\psi' \leq \cnomm'$ in \eqref{eq:eqnbeforestrippivotalinserted} are equivalent to some prenex formula whose prefix is a sequence of restricted existential quantifiers using operators in $\mathcal{F}\cup\mathcal{G}$, and whose matrix is a conjunction of restricting inequalities where $\nomj'$ and $\cnomm'$ do not occur in restricted position.
\end{lemma}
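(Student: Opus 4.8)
The plan is to prove the statement by a simultaneous induction on the structure of the two skeleton formulas, establishing the more general claim that for every positive skeleton $\langbase$-formula $\chi$ and every negative skeleton $\langbase$-formula $\chi'$, the inequalities $\nomj\leq\chi$ and $\chi'\leq\cnomm$ are each equivalent to a prenex formula of the required shape, and then instantiating it at $\chi=\varphi'$, $\nomj=\nomj'$ and $\chi'=\psi'$, $\cnomm=\cnomm'$ and conjoining the two outputs. This is the antecedent-analogue of Lemma \ref{lemma:inductivetokracht_succedent}, with one crucial difference: there the target (a Kracht disjunct) is closed under $\metaor$, whereas here the matrix must be a \emph{pure conjunction} of restricting inequalities. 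Since $\varphi'$ is a positive and $\psi'$ a negative skeleton formula of the \emph{original} language $\langbase$, the only connectives labelling nodes of $+\varphi'$ are $+\wedge,+\vee,+f$ and of $-\psi'$ are $-\vee,-\wedge,-g$ with $f\in\mathcal{F}$, $g\in\mathcal{G}$; this is exactly why the restricted quantifiers in the conclusion range over $\mathcal{F}\cup\mathcal{G}$ and not over $\mathcal{F}^\ast\cup\mathcal{G}^\ast$ (the starred operators live only in the minimal valuations $\overline{\gamma}^{mv},\overline{\delta}^{mv}$, which belong to the consequent and are handled by Lemma \ref{lemma:inductivetokracht_succedent}).

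For the base case, when $\chi$ is pure it is a single (co)nominal; since positive leaves of $+\varphi'$ carry nominals and negative leaves carry conominals, the recursion only ever reaches $\nomj\leq\nomh$ or $\cnomo\leq\cnomm$, which are restricting by Definition \ref{def:restricting_inequality}. For the inductive step on $\nomj\leq\chi$, the $+\wedge$ case is handled by the universal property of meets, $\nomj\leq\chi_1\wedge\chi_2$ iff $\nomj\leq\chi_1\metaand\nomj\leq\chi_2$, producing a conjunction and retaining $\nomj$ as restrictor; the $+f$ case is handled by Corollary \ref{lemma:flattening_skeleton_formulas}(1), which introduces a block of fresh (co)nominals $\overline{\nomi},\overline{\cnomn}$, typed according to $\varepsilon_f$, and rewrites the inequality as the restricted-existential prefix $(\exists\,\overline{\nomi},\overline{\cnomn}\rhd_f\nomj)$ applied to the conjunction of the inequalities $\nomi_k\leq\chi_k$ and $\chi_l\leq\cnomn_l$ coming from the immediate subformulas of $\chi$, to which the induction hypothesis applies. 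The case $\chi'\leq\cnomm$ is order-dual, using splitting for $-\vee$ and Corollary \ref{lemma:flattening_skeleton_formulas}(2) for $-g$. Finally, all the restricted existential quantifiers can be pulled to the front into a single prenex prefix because their restrictors form a tree rooted at $\nomj$ (resp.\ $\cnomm$): each newly introduced variable is restricted by a variable quantified strictly further out, so the nesting is well defined; and since $\nomj',\cnomm'$ are fresh and do not occur in $\varphi',\psi'$, they are never introduced as restricted variables and appear only as restrictors or on display, which yields the last clause.

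The genuinely delicate point, and the one I expect to be the main obstacle, is the treatment of $+\vee$ in $\varphi'$ and of $-\wedge$ in $\psi'$. Because we work in a perfect algebra, $\nomj'$ is completely join-prime and $\cnomm'$ completely meet-prime, so $\nomj'\leq\chi_1\vee\chi_2$ unfolds to $\nomj'\leq\chi_1\metaor\nomj'\leq\chi_2$ and dually for $-\wedge$; such a meta-disjunction is \emph{not} permitted in the matrix of the target formula. I would dispose of this by a normalization at the preprocessing stage: exploiting the complete join-preservation of each $f$ and distributivity to push $+\vee$ to the root of $\varphi'$, writing $\varphi'$ as $\bigvee_k\chi_k$ with each $\chi_k$ join-free, and then replacing the inequality $\bigvee_k\chi_k\leq\psi'$ by the conjunction $\bigmetaand_k(\chi_k\leq\psi')$ (dually pushing $-\wedge$ to the root on the right). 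Each conjunct is again definite inductive and is processed separately, so we may assume without loss of generality that $+\varphi'$ has no $+\vee$ and $-\psi'$ has no $-\wedge$; this only multiplies the number of Kracht formulas whose conjunction is the final correspondent, and leaves the rest of the construction untouched. The remaining difficulty is purely at the bookkeeping level: verifying that this normalization is harmless, and that the typing of the fresh (co)nominals and the restrictor-nesting stay consistent throughout the recursion so that the assembled prefix is a legitimate sequence of restricted existential quantifiers.
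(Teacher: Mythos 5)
Your proof is correct and follows essentially the same route as the paper's: a mutual structural induction on positive/negative skeleton formulas, with splitting for $+\wedge$ (dually $-\vee$) and Corollary \ref{lemma:flattening_skeleton_formulas} for $f$- and $g$-nodes, after which the restricted existential quantifiers are moved into a prenex prefix. The $+\vee$/$-\wedge$ complication you single out does not in fact arise in the paper's setting, since \eqref{eq:eqnbeforestrippivotalinserted} is derived from the ALBA output of a \emph{definite} inductive inequality whose preprocessing has already distributed such nodes to the root and split them away, so $\varphi'$ contains no $+\vee$ and $\psi'$ no $-\wedge$; your proposed normalization is precisely that preprocessing step, hence harmless but redundant.
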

\begin{proof}
We prove the statement for every inequality $\nomi \leq \varphi$ (resp.\ $\psi\leq\cnomn$), where $\varphi$ (resp.\ $\psi$) is a positive (resp.\ negative) pure scattered skeleton formula in $\langbase$, in particular it will hold for $\nomj'\leq \varphi'$ and $\psi' \leq \cnomm'$. We proceed by induction on the structure $\varphi$ (resp.\ $\psi$). The base is case is when $\nomj \leq \varphi$ is a restricted inequality, which satisfies the statement trivially. Suppose now $\varphi = f(\overline\alpha, \overline\beta)$ where $f \in \mathcal{F}$ and the $\alpha$s (resp.\ $\beta$s) are in positive (resp.\ negative) position\footnote{As $\varphi$ is a positive skeleton formula, the outermost connective cannot be in $\mathcal{G}$.}. If $f\equiv\wedge$ it is sufficient to apply splitting and by inductive hypothesis on the two conjuncts, the statement holds. If $f\nequiv\wedge$, by Lemma \ref{lemma:flattening_skeleton_formulas}, the inequality $\nomj\leq f(\overline\alpha,\overline\beta)$ is equivalent to,
\[
(\exists \overline\nomh,\overline\cnomo \rhd_f \nomj)(\overline{\nomh\leq\alpha} \metaand \overline{\beta\leq\cnomo}).
\]
Since $\varphi$ is a positive skeleton formula, each one of the $\alpha$s (resp.\ $\beta$s) is a positive (resp.\ negative) skeleton.
Therefore, by applying the inductive hypothesis to the inequalities in the matrix, and moving the resulting existential quantifiers in the prefix, the statement is proved. The lemma is proved similarly for inequalities $\psi\leq\cnomn$.
\end{proof}

\begin{lemma}
\label{lemma:inductivetokracht}
The $\langmeta$-formula \eqref{eq:eqnbeforestrippivotalinserted} is equivalent to some refined Kracht-DLE formula.
\end{lemma}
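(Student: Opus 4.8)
The plan is to glue the two preceding lemmas onto the implicational skeleton of \eqref{eq:eqnbeforestrippivotalinserted} and then read off the Kracht-DLE shape of Definition \ref{def:dlekracht}. First I would apply Lemma \ref{lemma:inductivetokracht_antecedent} to the conjunction $\nomj' \leq \varphi' \metaand \psi' \leq \cnomm'$ forming the positive part of the antecedent, rewriting it as a prenex block of restricted existential quantifiers (over connectives in $\mathcal{F}\cup\mathcal{G}$) whose matrix is a conjunction of restricting inequalities; here the freshly introduced (co)nominals are the restricted variables, while the leaves $\overline\nomj,\overline\cnomm,\overline\nomi,\overline\cnomn$ of the scattered skeleton $\varphi'\leq\psi'$ survive as the right/left-hand sides of the base restricting inequalities. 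Simultaneously I would apply Lemma \ref{lemma:inductivetokracht_succedent} to each disjunct $\gamma_i^{mv}\leq\dneg\nomi_i$ and $\dneg\cnomn_i\leq\delta_i^{mv}$ of the consequent, replacing it by an equivalent Kracht-DLE disjunct $\theta_i(\purew_i)$ with main variable $\purew_i=\dneg\nomi_i$ (resp.\ $\dneg\cnomn_i$) whose non-main variables are precisely the (co)nominals occurring in $\gamma_i^{mv}$ (resp.\ $\delta_i^{mv}$), i.e.\ variables among $\overline\nomj,\overline\cnomm$.

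Next I would push the restricted existential quantifiers produced by Lemma \ref{lemma:inductivetokracht_antecedent} outward through the implication. Since each such quantifier binds a fresh variable occurring neither in the pivotal inequality $\nomj'\nleq\cnomm'$ nor in the consequent, the equivalences $((\exists\,\overline\pureu\rhd_h\purew)A)\metaand B \equiv (\exists\,\overline\pureu\rhd_h\purew)(A\metaand B)$ and $((\exists\,\overline\pureu\rhd_h\purew)A)\Rightarrow C \equiv (\forall\,\overline\pureu\rhd_h\purew)(A\Rightarrow C)$ turn the whole block into a prefix of restricted universal quantifiers $\forall^R$, the nesting of which respects the restrictor dependencies generated by the flattening. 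The matrix of restricting inequalities left behind, together with $\nomj'\nleq\cnomm'$, is by construction a conjunction of inequalities of the form $\nomi\leq\nomh$ and $\cnomo\leq\cnomn$ plus a single pivotal inequality, i.e.\ a Kracht-DLE antecedent $\eta(\nomj',\cnomm')$ in the sense of Definition \ref{def:dleantecedent}. Collecting the outer universals over $\nomj',\cnomm'$ and over the surviving $\overline\nomj,\overline\cnomm,\overline\nomi,\overline\cnomn$ (now playing the role of the unrestricted aliases $\overline\nomh,\overline\cnomo$), the formula has exactly the shape \eqref{eqn:kracht_shape}.

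It then remains to verify the side conditions (1)--(4) of Definition \ref{def:dlekracht} and to pass to a refined formula. Condition (2) is immediate from Lemma \ref{lemma:inductivetokracht_succedent}, which guarantees that the non-main variables in the consequent atoms are drawn from $\overline\nomj,\overline\cnomm$, all inherently universal; condition (3) holds because each main variable is of the form $\dneg\nomi_i$ or $\dneg\cnomn_i$; condition (1) follows from scatteredness of the skeleton, since each original leaf variable occurs on the relevant side of exactly one base restricting inequality in $\eta$; and the polarity clause of Definition \ref{def:succedentbranches} is maintained because every universal restricted quantifier introduced in the succedent flattening simultaneously flips all polarities in its scope, as recorded in the derivation of Lemma \ref{lemma:inductivetokracht_succedent}. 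For condition (4) I would classify the fresh variables from the antecedent flattening as type $2$ (they occur in $\eta$ or as restrictors, but not in the consequent) and check that any surviving quantifier over a variable occurring only in the consequent is type $1$; finally, invoking Lemmas \ref{lemma:kracht_pivotal_not_in_succedent} and \ref{lemma:kracht_aliases_in_succedent} I would ensure that the pivotal variables do not occur in the succedent and that every alias does, producing a \emph{refined} Kracht-DLE formula.

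The main obstacle I expect is precisely the verification of condition (4): one must show that the dependency chains generated by flattening the antecedent and the succedent never force a single variable to be simultaneously a restrictor/antecedent variable and a consequent variable, so that the type $1$ / type $2$ dichotomy is exhaustive and disjoint. This hinges on the fact that the restricted quantifiers coming from the antecedent bind only fresh variables, while the genuinely shared variables $\overline\nomi,\overline\cnomn$ occur unrestricted (as aliases) rather than under $\forall^R$; keeping this separation straight across the nested restricted-quantifier prefixes, and reconciling the $\kappa/\lambda$ renaming that relates a nominal alias $\nomi_i$ to its occurrence $\dneg\nomi_i$ in the consequent, is the delicate bookkeeping at the heart of the argument.
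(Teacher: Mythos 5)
Your overall strategy coincides with the paper's proof: apply Lemma \ref{lemma:inductivetokracht_succedent} to each disjunct of the consequent and Lemma \ref{lemma:inductivetokracht_antecedent} to the antecedent, curry the resulting restricted existential prefix through the implication so that it becomes a block of restricted universal quantifiers, and then classify the variables as pivotal, alias, type 1 and type 2. However, one step fails as written. You claim that after the antecedent flattening ``the matrix of restricting inequalities left behind, together with $\nomj'\nleq\cnomm'$, is by construction a conjunction of inequalities of the form $\nomi\leq\nomh$ and $\cnomo\leq\cnomn$ plus a single pivotal inequality,'' and accordingly you let \emph{all} surviving skeleton leaves $\overline\nomj,\overline\cnomm,\overline\nomi,\overline\cnomn$ ``play the role of the unrestricted aliases.'' This misreads what Lemma \ref{lemma:inductivetokracht_antecedent} delivers: its induction stops at restricting inequalities, which in general still carry an operator --- e.g.\ $\Box\cnomn\leq\cnomo$ in the transitivity example (Example \ref{ex:inductivetokracht:transitivity}). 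Such inequalities are not admissible in a Kracht-DLE antecedent (Definition \ref{def:dleantecedent} allows only $\nomi \leq \nomh$, $\cnomo \leq \cnomn$ and the pivotal inequality), and a leaf variable occurring under an operator does not appear as a full side of any flat inequality, so it cannot satisfy the alias condition (1) of Definition \ref{def:dlekracht}.

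The missing step --- which is precisely where the paper's \emph{type 1} quantifiers come from --- is to rewrite each remaining operator-bearing restricting inequality via Notation \ref{notation:restrictedquant} as a restricted universal quantifier in the prefix binding the corresponding leaf variables in $\overline\nomj$ and $\overline\cnomm$; scatteredness of the skeleton guarantees that these leaves occur nowhere else in the antecedent, while they may occur in the consequent, so these quantifiers are legitimately of type 1. In your construction every prefix restricted quantifier binds a fresh variable from the antecedent flattening (type 2), so your clause ``any surviving quantifier over a variable occurring only in the consequent is type 1'' has no instances, and the type 1 variables are left without a source: they end up misclassified as aliases, violating condition (1). Once the conversion above is made, the dichotomy you flag as ``the main obstacle'' in condition (4) resolves immediately --- type 2 quantifiers bind the fresh flattening variables (never in the consequent), type 1 quantifiers bind the operator-restricted skeleton leaves, and the remaining leaves occurring in flat inequalities are the aliases, with $\nomj'$ and $\cnomm'$ pivotal --- which is exactly the paper's one-sentence bookkeeping. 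With this correction your argument matches the paper's proof.
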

\begin{proof}
It is sufficient to apply Lemma \ref{lemma:inductivetokracht_succedent} to each inequality in the consequent, and Lemma \ref{lemma:inductivetokracht_antecedent}.
By Lemma \ref{lemma:inductivetokracht_succedent}, the non-main variables of the atoms in the succedent are variables that were originally in some $\gamma_i^{mv}$ or $\delta_i^{mv}$, and therefore some variable in $\overline\nomj$ or $\overline\cnomm$, which are inherently universal. The sequence on existential quantifiers produced by Lemma \ref{lemma:inductivetokracht_antecedent} can be rewritten as a sequence of universal restricted quantifiers in the prefix; as they introduce variables that cannot occur in the consequent, these are the \emph{type 2} quantifiers of the formula. The remaining restricting inequalities with at least one operator can be rewritten using Notation \ref{notation:restrictedquant} and incorporated into restricted quantifiers in the prefix restricting some of the variables in $\overline\nomj$ and $\overline\cnomm$. As these can occur in the consequent, but not in the antecedent (a skeleton formula is scattered), these are the variables introduced by the \emph{type 1} quantifiers of the formula. The remaining variables in $\overline\nomj$ and $\overline\cnomm$ are the \emph{alias} variables, whilst $\nomj'$ and $\cnomm'$ are the \emph{pivotal variables} of the formula.
\end{proof}

The lemmas shown in this section implicitly encode an algorithm that `strips' the operators in the ALBA output of an inductive formula to produce a Kracht-DLE formula. The following examples show how this algorithm works in practice.

\begin{example}
\label{ex:inductivetokracht:transitivity}
Let us show how to express the first order correspondence of $\Box p \leq \Box\Box p$ in Kracht shape. By applying the steps to get in the shape of \eqref{eq:eqnbeforestrippivotalinserted},
\begin{center}
\begin{tabular}{rlr}
    & $\forall p (\Box p \leq \Box\Box p)$ & \\
    iff &  $\forall \nomj\forall\cnomn\left(\nomj \leq \Box\cnomn \Rightarrow \nomj \leq \Box\Box\cnomn \right)$ & ALBA run \\
    iff &  $\forall \nomj\forall\cnomn\left( \nomj \nleq \Box\Box\cnomn \Rightarrow \nomj \nleq \Box\cnomn \right)$ & contrapositive \\
    iff &  $\forall \nomj\forall\cnomm\forall\cnomn\left( \Box\Box\cnomn \leq \cnomm \metaand \nomj \nleq \cnomm \Rightarrow \nomj \nleq \Box\cnomn \right)$ & first approximation \\
    iff &  $\forall \nomj\forall\cnomm\forall\cnomn\left( \Box\Box\cnomn \leq \cnomm \metaand \nomj \nleq \cnomm \Rightarrow \Box\cnomn \leq \kappa(\nomj) \right)$. & Lemma \ref{lemma:flipnegation}
\end{tabular}
\end{center}
The consequent contains only one disjunct which is already a flat inequality. Let us apply Lemma \ref{lemma:inductivetokracht_antecedent} to the antecedent.
\begin{center}
\begin{tabular}{rlr}
    & $\forall \nomj\forall\cnomm\forall\cnomn\left( \Box\Box\cnomn \leq \cnomm \metaand \nomj \nleq \cnomm \Rightarrow \Box\cnomn \leq \kappa(\nomj) \right)$ & \\
    iff & $\forall \nomj\forall\cnomm\forall\cnomn \left( (\exists\cnomo \rhd_\Box \cnomm)(\Box\cnomn \leq \cnomo) \metaand \nomj \nleq \cnomm \Rightarrow \Box\cnomn \leq \kappa(\nomj) \right)$ & Corollary \ref{lemma:flattening_skeleton_formulas} \\
    iff & $\forall \nomj\forall\cnomm\forall\cnomn (\forall\cnomo \rhd_\Box \cnomm) \left( \Box\cnomn \leq \cnomo \metaand \nomj \nleq \cnomm \Rightarrow \Box\cnomn \leq \kappa(\nomj) \right)$ & FO meta-language \\
    iff & $\forall \nomj\forall\cnomm(\forall\cnomo \rhd_\Box \cnomm)(\forall\cnomn \rhd_\Box \cnomo) \left( \nomj \nleq \cnomm \Rightarrow \Box\cnomn \leq \kappa(\nomj) \right)$. & Notation \ref{notation:restrictedquant}
\end{tabular}
\end{center}
The last formula is in Kracht shape, as required. Notice how the two restricted quantifiers in the prefix $(\forall\cnomo \rhd_\Box \cnomm)$ and $(\forall\cnomn \rhd_\Box \cnomo)$, are result of different parts of the procedure. Indeed, the first is a \emph{type 2} quantifier, whereas the second is a \emph{type 1} one.
\end{example}

\begin{example}
\label{ex:inductivetokracht:morecomplex}
We show the procedure above on a slightly more complex formula, namely
\[
    \forall p \forall q \forall r[ \Diamond( (p \wedge q) \rightarrow r) \wedge \Box q \leq \Box(\Diamond p \rightarrow \Diamond(q \wedge r) ].
\]
The formula is inductive with order type $\varepsilon(p,q,r) = (1,1,1)$ and inductive order $p <_\Omega q <_\Omega r$. An ALBA run yields
\begin{center}
\begin{tabular}{rl}
     & $\forall p \forall q \forall r[ \Diamond( (p \wedge q) \rightarrow r) \wedge \Box q \leq \Box(\Diamond p \rightarrow \Diamond(q \wedge r) ]$ \\
    iff & $\forall p \forall q \forall r \forall \nomi \forall \nomh \forall \nomk \forall \cnomn[ \nomi \leq (p \wedge q) \rightarrow r \metaand \nomh \leq \Box q \metaand \nomk \leq p \metaand \Diamond(q \wedge r) \leq \cnomn$\\
    & \hfill $\Rightarrow \Diamond\nomi \wedge \nomh \leq \Box(\Diamond\nomk \rightarrow \cnomn) ]$ \\
    iff & $\forall p \forall q \forall r \forall \nomi \forall \nomh \forall \nomk \forall \cnomn[ \nomi \wedge (p \wedge q) \leq r \metaand \Diamondblack\nomh \leq q \metaand \nomk \leq p \metaand \Diamond(q \wedge r) \leq \cnomn$ \\
    & \hfill $\Rightarrow \Diamond\nomi \wedge \nomh \leq \Box(\Diamond\nomk \rightarrow \cnomn) ]$\\
    iff & $\forall \nomi \forall \nomh \forall \nomk \forall \cnomn[ \Diamond(\Diamondblack\nomh \wedge \nomi \wedge \nomk \wedge \Diamondblack\nomh) \leq \cnomn \Rightarrow \Diamond\nomi \wedge \nomh \leq \Box(\Diamond\nomk \rightarrow \cnomn) ]$,
\end{tabular}
\end{center}
whose contrapositive is
\[
\forall \nomi \forall \nomh \forall \nomk \forall \cnomn[ 
\Diamond\nomi \wedge \nomh \nleq \Box(\Diamond\nomk \rightarrow \cnomn)
\Rightarrow 
\Diamond(\Diamondblack\nomh \wedge \nomi \wedge \nomk \wedge \Diamondblack\nomh) \nleq \cnomn.
 ]
\]
By Corollary \ref{cor:firstapproxnomcnom} and Lemma \ref{lemma:flipnegation}, it is equivalent to
\[
\forall\nomj\forall\cnomm\forall \nomi \forall \nomh \forall \nomk \forall \cnomn[ 
\nomj \leq \Diamond\nomi \wedge \nomh \metaand \Box(\Diamond\nomk \rightarrow \cnomn) \leq \cnomm \metaand \nomj \nleq \cnomm
\Rightarrow 
\lambda(\cnomn) \leq \Diamond(\Diamondblack\nomh \wedge \nomi \wedge \nomk \wedge \Diamondblack\nomh) 
].
\]
Let us now apply Lemma \ref{lemma:inductivetokracht_succedent} on the consequent.
\begin{center}
\begin{tabular}{rlr}
     & $\lambda(\cnomn) \leq \Diamond(\Diamondblack\nomh \wedge \nomi \wedge \nomk \wedge \Diamondblack\nomh)$ & \\
     iff & $(\exists \nomi' \rhd_\Diamond \lambda(\cnomn))( \nomi' \leq \Diamondblack\nomh \wedge \nomi \wedge \nomk \wedge \Diamondblack\nomh)$ & Corollary \ref{lemma:flattening_skeleton_formulas} \\
     iff & $(\exists \nomi' \rhd_\Diamond \lambda(\cnomn))( \nomi' \leq \Diamondblack\nomh \metaand \nomi' \leq \nomi \metaand \nomi' \leq \nomk \metaand \nomi' \leq \Diamondblack\nomh)$ & splitting. \\
\end{tabular}
\end{center}
By the procedure described in Lemma \ref{lemma:inductivetokracht_antecedent}, the first conjunct on the antecedent becomes
\[
\nomj \leq \Diamond\nomi \metaand \nomj \leq \nomh,
\]
by splitting, whereas the second becomes
\begin{center}
\begin{tabular}{rlr}
     & $\Box(\Diamond\nomk \rightarrow \cnomn) \leq \cnomm$ & \\
     iff & $(\exists \cnomo \rhd_\Box \cnomm)(\Diamond \nomk \rightarrow \cnomn \leq \cnomo)$ & Corollary \ref{lemma:flattening_skeleton_formulas} \\
     iff & $(\exists \cnomo \rhd_\Box \cnomm)(\exists \nomh',\cnoml \rhd_\rightarrow \cnomo) (\nomh' \leq \Diamond \nomk \metaand \cnomn \leq \cnoml)$. & Corollary \ref{lemma:flattening_skeleton_formulas}
\end{tabular}
\end{center}
After all of these steps, the obtained formula is
\[
\begin{array}{l}
\forall\nomj\forall\cnomm\forall \nomi \forall \nomh \forall \nomk \forall \cnomn\big[
\nomj \leq \Diamond\nomi \metaand \nomj \leq \nomh \wedge \metaand (\exists \cnomo \rhd_\Box \cnomm)(\exists \nomh',\cnoml \rhd_\rightarrow \cnomo) (\nomh' \leq \Diamond \nomk \metaand \cnomn \leq \cnoml) \metaand \\
\phantom{\forall\nomj\forall\cnomm\forall \nomi \forall \nomh \forall \nomk \forall \cnomn\big[}
\nomj \nleq \cnomm \hfill \Rightarrow (\exists \nomi' \rhd_\Diamond \lambda(\cnomn))( \nomi' \leq \Diamondblack\nomh \metaand \nomi' \leq \nomi \metaand \nomi' \leq \nomk \metaand \nomi' \leq \Diamondblack\nomh) \ \ \big],
\end{array}
\]
which by rewriting the \emph{2} quantifiers in the prefix becomes
\[
\begin{array}{l}
\forall\nomj\forall\cnomm\forall \nomi \forall \nomh \forall \nomk \forall \cnomn(\forall \cnomo \rhd_\Box \cnomm)(\forall \nomh',\cnoml \rhd_\rightarrow \cnomo)\big[
\nomj \leq \Diamond\nomi \metaand \nomj \leq \nomh \wedge \metaand \nomh' \leq \Diamond \nomk \metaand \cnomn \leq \cnoml \metaand \\
\phantom{\forall\nomj\forall\cnomm\forall \nomi \forall \nomh \forall \nomk \forall \cnomn\big[}
\nomj \nleq \cnomm \hfill \Rightarrow (\exists \nomi' \rhd_\Diamond \lambda(\cnomn))( \nomi' \leq \Diamondblack\nomh \metaand \nomi' \leq \nomi \metaand \nomi' \leq \nomk \metaand \nomi' \leq \Diamondblack\nomh) \ \ \big].
\end{array}
\]
By using Notation \ref{notation:restrictedquant}, also the \emph{type 1} quantifiers take shape, indeed the previous formula is equivalent to
\[
\begin{array}{l}
\forall\nomj\forall\cnomm\forall \nomh \forall \cnomn(\forall \cnomo \rhd_\Box \cnomm)(\forall \nomh',\cnoml \rhd_\rightarrow \cnomo)(\forall \nomi \rhd_\Diamond \nomj)(\forall \nomk \rhd_\Diamond \nomh') \big[ \nomj \leq \nomh \wedge \metaand \cnomn \leq \cnoml \metaand \nomj \nleq \cnomm \\
\phantom{\forall\nomj\forall\cnomm\forall \nomi \forall \nomh \forall \nomk \forall \cnomn\big[}
\hfill \Rightarrow (\exists \nomi' \rhd_\Diamond \lambda(\cnomn))( \nomi' \leq \Diamondblack\nomh \metaand \nomi' \leq \nomi \metaand \nomi' \leq \nomk \metaand \nomi' \leq \Diamondblack\nomh) \ \ \big],
\end{array}
\]
which is in Kracht-DLE shape.
\end{example}

\iffalse
\begin{example}
Let us consider the formula in Example \ref{ex:goranko_backward_looking}, namely
\[
\forall p\forall q(p\wedge \Box(\Diamond p\rightarrow \Box q)\leq \Diamond\Box\Box q).
\]
We have already shown that an ALBA run yields
\[
\forall \nomj\forall \cnomm[\Diamond\Box\Box \Diamondblack(\Diamond \nomj\wedge \Diamondblack\nomj)\leq \cnomm\Rightarrow \nomj\leq \cnomm].
\]
By taking the contrapositive and by Lemma \ref{lemma:flipnegation}, it is equivalent to
\[
\forall \nomj\forall \cnomm[
\nomj\nleq \cnomm
\Rightarrow
\lambda(\cnomm) \leq \Diamond\Box\Box \Diamondblack(\Diamond \nomj\wedge \Diamondblack\nomj) 
].
\]
\end{example}
\fi

\subsection{From Kracht-DLE to very simple Sahlqvist with residuals}
\label{ssec:krachtovssahlq}
In this section we show an algorithm that given a Kracht-DLE formula for some DLE language $\langbase$, outputs a very simple Sahlqvist formula in $\langresidual$ to which it corresponds.

\subsubsection{Compaction of the consequent} By  exhaustively  applying  Ackermann eliminations and inverse splitting, a Kracht disjunct $\theta(\purew)$ is shown to be equivalent to some inequality that has $\purew$ on display. This algorithm inverts the `operators stripping' algorithm discussed in Lemma \ref{lemma:inductivetokracht_succedent}. In the algorithm it is implicit that one main pure variable is fixed in Kracht-DLE disjunct in input. Every Kracht-DLE disjunct has a unique main pure variable, except those of shape $\nomi\leq\cnomn$, for which every choice leads to the same outcome.

\begin{algorithm}
\label{algo:elimination_of_non-inherently_universals}
\caption{Compaction of a Kracht-DLE disjunct $\theta(\purew)$.}
\begin{algorithmic}[1]
\Procedure{DisjunctCompaction}{$\theta$}
	\If{$\theta$ is a flat inequality}
		\Return $\theta$
	\Else
		\State Let $\{\theta_1, \ldots, \theta_n \}$ be the set of all the direct sub-Kracht-DLE disjuncts of $\theta$
		\State Let $I=[I_1,\ldots,I_n]$ a list of inequalities
		\ForAll{$\theta_i$ in $\{\theta_1,\ldots,\theta_n\}$}
			\State $I_i \gets$ DisjunctCompaction$(\theta_i)$
		\EndFor
		\If{$\theta$ is a (dis/con)junction of $\theta_i(\nomj)$}				\State Let $s_1,\ldots,s_n$ be formulas such that $I_i$ is $\nomj \leq s_i$ for $i=1,\ldots,n$
				\State\Return \ \ $\nomj \leq s_1 \ \wedge \ \cdots \ \wedge \ s_n$ if conjunction, $\nomj \leq s_1 \ \vee \ \cdots \ \vee \ s_n$ otherwise
		\ElsIf{$\theta$ is a (dis/con)junction of $\theta_i(\cnomm)$}
				\State Let $s_1,\ldots,s_n$ be formulas such that $I_i$ is $s_i\leq \cnomm$ for $i=1,\ldots,n$
				\State \Return \  $s_1 \vee  \cdots  \vee  s_n \leq \cnomm$ if conjunction, $s_1 \wedge  \cdots  \wedge  s_n \leq \cnomm$ otherwise
		\ElsIf{$\theta$ has form $(Q \ \overline\pureu \ \rhd_h \ \purev)\zeta$ with $Q \in \{\forall,\exists\}$ and $h \in \mathcal{F}^* \cup \mathcal{G}^*$}
			\State \Return Eliminate $\overline\pureu$ via Corollary \ref{lemma:flattening_skeleton_formulas}
		\EndIf
	\EndIf
	\EndProcedure
\end{algorithmic}
\end{algorithm}

\begin{lemma}
When applied a Kracht-DLE disjunct $\theta$, Algorithm \ref{algo:elimination_of_non-inherently_universals} outputs an inequality of shape $\nomk \leq s$ (resp.\ $s \leq \cnoml$), where $\nomk$ (resp.\ $\cnoml$) is the main pure variable of the branch, and it does not occur in $s$.
\end{lemma}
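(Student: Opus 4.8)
The plan is to argue by structural induction on the Kracht-DLE disjunct $\theta(\purew)$, mirroring exactly the recursion of Algorithm \ref{algo:elimination_of_non-inherently_universals}, and maintaining as an induction invariant the full statement of the lemma: each recursive call returns an inequality displaying its main variable $\purew$ (on the left if $\purew$ is a nominal, on the right if it is a conominal) with $\purew$ absent from the remaining term $s$. For the base case, where $\theta$ is a flat inequality, this is immediate from Definition \ref{def:restricting_inequality}: a displayed nominal sits on the left-hand side and a displayed conominal on the right, and the side conditions on $\overline{\purew_1},\overline{\purew_2}$ guarantee that the displayed variable does not occur in the accompanying term. Since the algorithm returns $\theta$ unchanged, the invariant holds.

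For the Boolean cases, where $\theta$ is a meta-conjunction or meta-disjunction of sub-disjuncts $\theta_i$ all sharing the main variable $\purew$, I would invoke the induction hypothesis to compact each $\theta_i$ to $\nomk \leq s_i$ (if $\purew=\nomk$) or $s_i \leq \cnoml$ (if $\purew=\cnoml$), with $\purew \notin s_i$. The conjunctive cases then reduce to the complete-lattice identities $\bigmetaand_i(\nomk \leq s_i) \Leftrightarrow \nomk \leq \bigwedge_i s_i$ and $\bigmetaand_i(s_i \leq \cnoml) \Leftrightarrow \bigvee_i s_i \leq \cnoml$, while the disjunctive cases use $\bigmetaor_i(\nomk \leq s_i) \Leftrightarrow \nomk \leq \bigvee_i s_i$ and its dual, which are exactly the statements that in a perfect DLE nominals are completely join-prime and conominals completely meet-prime. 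In every case the output coincides with the value returned by the algorithm and $\purew$ is absent from the resulting term.

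The substantial work is in the two restricted-quantifier clauses. For the existential clause $\theta = (\exists\,\overline\pureu \rhd_h \purew)\bigmetaand_i \theta_i(\pureu_i)$, in which $\purew$ occurs in no $\theta_i$, I would first compact each $\theta_i$ by the induction hypothesis to an inequality displaying $\pureu_i$, then unfold the restricted quantifier via Notation \ref{notation:restrictedquant}. The result is precisely the right-hand side of Corollary \ref{lemma:flattening_skeleton_formulas}(1)(b) when $\purew$ is a nominal and $h \in \mathcal{F}^*$, and of item (2)(b) when $\purew$ is a conominal and $h \in \mathcal{G}^*$; applying the corollary in the backward direction eliminates $\overline\pureu$ by substituting each compacted bound into the corresponding coordinate of $h$, yielding $\purew \leq h(\overline{t})$ (resp.\ $h(\overline{t}) \leq \purew$). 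Since $\purew$ occurs in none of the $\theta_i$, it occurs in none of the bounds, hence not in $h(\overline{t})$, so the invariant is preserved.

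For the universal clause $\theta = (\forall\,\overline\pureu \rhd_h \dneg\purew)\bigmetaor_i \theta_i(\dneg\pureu_i)$, I would treat it as the de Morgan dual of the existential one: push a meta-negation inward to convert the universal restricted quantifier over a meta-disjunction into an existential restricted quantifier over a meta-conjunction of negated sub-disjuncts, compact each $\metanot\theta_i$ using the induction hypothesis together with Lemma \ref{lemma:flipnegation} to flip the displayed variable $\dneg\pureu_i$ back to $\pureu_i$, run the existential argument just described, and finally flip the outer negation back via Lemma \ref{lemma:flipnegation} and the involutivity $\dneg\dneg\purew = \purew$ (i.e.\ $\kappa\lambda = \lambda\kappa = \mathrm{id}$) to land on an inequality displaying $\purew$. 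I expect this universal case to be the main obstacle, since it demands careful polarity bookkeeping; the reason it goes through is the final clause of Definition \ref{def:succedentbranches}, which pins down the polarity of every non-main (co)nominal in terms of the parity of the enclosing universal quantifiers, and this is exactly what ensures that the compacted sub-disjuncts satisfy the occurrence-and-polarity hypotheses required by Corollary \ref{lemma:flattening_skeleton_formulas}. The remaining manipulations are routine lattice algebra and repeated use of $\kappa,\lambda$.
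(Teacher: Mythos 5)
Your proof is correct and takes essentially the same route as the paper's: the same structural induction with the same case split, the base and Boolean cases handled identically, and the quantifier cases discharged by the Ackermann-shape elimination of Corollary \ref{lemma:flattening_skeleton_formulas}. Your explicit de Morgan unpacking of the universal clause via Lemma \ref{lemma:flipnegation} and your primeness justification of inverse splitting merely inline what the paper cites compactly through Table \ref{table:ackermann_restr_quant} and the splitting rules (indeed, that negation argument is exactly how the paper derived the universal rows of that table in Lemma \ref{lemma:inductivetokracht_succedent}), so nothing is genuinely different.
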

\begin{proof}
We proceed by induction on the structure of $\theta(\purew)$. If $\theta(\purew)$ is a flat inequality, then the statement holds by definition of Kracht disjunct. If $\theta(\purew):=\theta_1(\purew) \metaand \cdots \metaand \theta_n(\purew)$ (resp.\ $\theta(\purew):=\theta_1(\purew) \metaor \cdots \metaor \theta_n(\purew)$), then the algorithm applies inverse splitting in line 11 if $\purew$ is a nominal, line 14 if it is a conominal. As, by inductive hypothesis on each $\theta_i$, $\purew$ does not occur in $s_i$ ($\purew$ is main in each $s_i$), it does not occur in $\bigwedge_i s_i$ (resp.\ $\bigvee_i s_i$). 
Assume that $\theta(\purew) \coloneqq (\exists \overline\pureu \ \rhd_h \ \purew)(\bigmetaand_{i=1}^n \theta_i(\pureu_i))$ (resp.\ $\theta(\purew) \coloneqq (\forall \overline\pureu \ \rhd_h \ \dneg\purew)(\bigmetaor_{i=1}^n \theta_i(\dneg\pureu_i))$), with $h \in \mathcal{F}^* \cup \mathcal{G}^*$. The induction hypothesis on all the $\theta_i$ (for $1\leq i\leq n$) ensures that $\theta(\purew)$ is in Ackermann shape w.r.t.\ $\pureu$ (in the sense that i satisfies the conditions of Corollary \ref{lemma:flattening_skeleton_formulas}), which can then be eliminated by applying Corollary \ref{lemma:flattening_skeleton_formulas} in line 16 (cf. Table \ref{table:ackermann_restr_quant}), thus proving the statement.
\end{proof}

\begin{lemma}
\label{lemma:polarities_in_gammadelta}
When applied to some Kracht-DLE disjunct $\theta(\purew)$, Algorithm \ref{algo:elimination_of_non-inherently_universals} produces an inequality where the nominals (resp.\ conominals) different from $\purew$ occur in negative (resp.\ positive) polarity.
\end{lemma}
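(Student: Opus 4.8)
The plan is to prove the statement by induction on the structure of the Kracht-DLE disjunct $\theta(\purew)$, following the same case distinction used by Algorithm \ref{algo:elimination_of_non-inherently_universals} and in the proof of the preceding lemma, but now tracking the \emph{signs} of the pure variables rather than merely the identity of the main variable. Throughout, I read ``nominal occurs negatively / conominal occurs positively'' as a statement about the signed generation tree of the compacted output $\nomk \leq s$ (resp.\ $s \leq \cnoml$) delivered by the algorithm, i.e.\ about the sign of the corresponding leaf in $-s$ (resp.\ $+s$).

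For the base case $\theta(\purew)$ is a flat inequality and the algorithm returns it unchanged; the claim is then immediate from the polarity clause of Definition \ref{def:succedentbranches}, since a standalone flat disjunct lies under zero universal quantifiers (an even number), forcing every nominal (resp.\ conominal) distinct from $\purew$ to occur negatively (resp.\ positively). For the conjunction and disjunction cases the algorithm returns $\nomj \leq \bigwedge_i s_i$ or $\nomj \leq \bigvee_i s_i$ (resp.\ $\bigvee_i s_i \leq \cnomm$ or $\bigwedge_i s_i \leq \cnomm$), where by the induction hypothesis each $s_i$ already has the required polarities; since $\wedge$ and $\vee$ are sign-preserving nodes, combining the $s_i$ beneath them alters the sign of no leaf, and the output inherits the correct polarities.

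The inductive cases to watch are those in which $\theta(\purew)$ is a restricted quantifier, $(\exists \overline\pureu \rhd_h \purew)\bigmetaand_i \theta_i(\pureu_i)$ or $(\forall \overline\pureu \rhd_h \dneg\purew)\bigmetaor_i \theta_i(\dneg\pureu_i)$, which the algorithm compacts by eliminating $\overline\pureu$ through Corollary \ref{lemma:flattening_skeleton_formulas} (cf.\ Table \ref{table:ackermann_restr_quant}). By the induction hypothesis together with the preceding lemma, each recursive call returns an inequality of shape $\pureu_i \leq s_i$ / $s_i \leq \pureu_i$ (resp.\ $\dneg\pureu_i \leq s_i$ / $s_i \leq \dneg\pureu_i$), with $s_i$ carrying the correct polarities and the respective main variable absent from $s_i$. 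Re-assembling these into $\purew \leq h(\overline{s})$ or $h(\overline{s}) \leq \purew$, I would check coordinate by coordinate that every $s_i$ is substituted into a coordinate of $h$ whose order-type makes the sign $s_i$ receives in the signed generation tree of the re-assembled inequality coincide with the sign it carried as a standalone output; the bound variables $\overline\pureu$ (and their $\kappa,\lambda$-images) are eliminated and do not survive, and $\purew$ is excluded by hypothesis, so the polarities are preserved exactly.

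The main obstacle is the $\forall$-case, where the restricted quantifier simultaneously introduces a universal quantifier — which, by the parity clause of Definition \ref{def:succedentbranches}, is precisely what would \emph{flip} the expected polarity of the enclosed variables — and, via $\dneg$, replaces each sub-disjunct's main variable $\pureu_i$ by $\kappa(\nomi_i)$ or $\lambda(\cnomn_i)$. The crux is to show that these two effects cancel. Concretely, in rows $3$ and $4$ of Table \ref{table:ackermann_restr_quant} a sub-result of the form $\alpha_i \leq \kappa(\nomi_i)$ is reinserted at a \emph{positive} coordinate of $h$, so that the sign $+h$ propagates to $+\alpha_i$, matching the sign under which $\alpha_i$ was already shown to have the right polarities, while $\lambda(\cnomn_i) \leq \beta_i$ is reinserted at a \emph{negative} coordinate, so that $+h$ propagates to $-\beta_i$. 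Verifying that this $\kappa/\lambda$-bookkeeping lines up with the order-type of each coordinate — so that the net sign of every surviving nominal/conominal comes out negative/positive as claimed — is the one step requiring genuine care; the remaining cases reduce to routine sign-propagation.
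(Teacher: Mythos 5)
Your strategy is the paper's own argument unrolled into a structural induction: the paper's proof consists of exactly the bookkeeping you set up --- the polarity clause of Definition \ref{def:succedentbranches}, preservation of signs under inverse splitting and the existential instances of Corollary \ref{lemma:flattening_skeleton_formulas}, and an accounting for universal quantifiers --- so there is no methodological divergence, and your base, conjunction/disjunction, and existential cases are fine. Your coordinate-wise check of rows 3 and 4 of Table \ref{table:ackermann_restr_quant} is also correct as a computation: reinserting $\alpha_i\leq\kappa(\nomi_i)$ at a positive coordinate and $\lambda(\cnomn_i)\leq\beta_i$ at a negative coordinate of $h$ \emph{preserves} the signs that the matrix inequalities already carry.

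The gap is in how that computation is wired into the induction. If the polarity clause of Definition \ref{def:succedentbranches} is read literally, with polarities alternating with universal depth, then the sub-disjuncts $\theta_i(\dneg\pureu_i)$ under the universal quantifier satisfy the \emph{flipped} clause, so ``the induction hypothesis \dots with $s_i$ carrying the correct polarities'' is not available for them; and since, by your own check, the reassembly step preserves signs, there is no flip left at reassembly to effect the cancellation you promise. Indeed, under that literal reading the lemma itself would fail: $(\forall\nomi\rhd_\Diamond\lambda(\cnomm))(\nomh\leq\kappa(\nomi))$ would be a legal disjunct (the nominal $\nomh$ occurs positively at odd universal depth), yet it compacts to $\Diamond\nomh\leq\cnomm$, in which $\nomh$ occurs positively. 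What actually holds, and what all of the paper's worked examples exhibit (e.g.\ the depth-one atom $\lambda(\cnomn_2)\leq\Diamond\nomh_1$ in Example \ref{eg:secondgoranko}, where $\nomh_1$ is negative), is the uniform invariant: nominals negative and conominals positive in \emph{every} flat inequality of the disjunct, at every depth. The ``reversal'' that the paper's proof attributes to universal applications is the meta-negation flip performed in the forward direction (proof of Lemma \ref{lemma:inductivetokracht_succedent}); it is already recorded in the disjunct's atoms, not produced during compaction. With the invariant stated uniformly, your plain induction hypothesis applies verbatim to the $\theta_i$ and your preservation check closes the $\forall$-case; but your ``cancellation'' mechanism --- attributing a compensating flip to the $\dneg$-renaming of the \emph{main} variables $\pureu_i$ --- is not the right explanation, since that renaming never touches the non-main occurrences the lemma is about. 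So the proposal needs either the corrected (uniform) reading of the polarity clause, or a parity-indexed strengthening of the induction hypothesis; as written, the crucial $\forall$-step rests on an induction hypothesis that the definition, taken at face value, does not supply.
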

\begin{proof}
By definition of Kracht-DLE disjunct, the polarity of its non-main variables depends on the number of universal quantifiers under which they are nested. Indeed, polarities are preserved by applications of Corollary \ref{lemma:flattening_skeleton_formulas} on existential quantifiers and inverse splitting rules, and are reversed by applications of Corollary \ref{lemma:flattening_skeleton_formulas} on universal quantifiers (see Table \ref{table:ackermann_restr_quant}). Thus, in the end nominals (resp.\ conominals) must occur in negative (resp.\ positive) polarity.
\end{proof}

\begin{example}
\label{eg:goranko2}
Let us apply Algorithm \ref{algo:elimination_of_non-inherently_universals} to the consequent of the formulas in Example \ref{eg:goranko}. The first one is
\[
 (\exists   \nomi_{1}   \rhd_\Diamond  \lambda(\cnomo_1)  )  (\forall   \cnomn_{1}   \rhd_\Box  \kappa(\nomi_{1})  )  (\forall   \cnomn_{2}   \rhd_\Box  \cnomn_{1} )  (\exists   \nomi_{2}   \rhd_\Diamondblack \lambda(\cnomn_{2})  ) ( \nomi_{2}  \leq \Diamond  \nomh_{1}  \ \&\  \nomi_{2}  \leq \Diamondblack  \nomh_{2})
\]
The innermost Kracht disjunct is a conjunction sharing the same main variable, thus we can compact it into one inequality and then proceed applying Ackermann eliminations:
\begin{center}
\begin{tabular}{rl}
&  $(\exists   \nomi_{1}   \rhd_\Diamond  \lambda(\cnomo_1)  )  (\forall   \cnomn_{1}   \rhd_\Box  \kappa(\nomi_{1})  )  (\forall   \cnomn_{2}   \rhd_\Box  \cnomn_{1} )  (\exists   \nomi_{2}   \rhd_\Diamondblack \lambda(\cnomn_{2})  ) ( \nomi_{2}  \leq \Diamond  \nomh_{1}  \wedge \Diamondblack  \nomh_{2})$ \\
iff & $(\exists   \nomi_{1}   \rhd_\Diamond  \lambda(\cnomo_1)  )  (\forall   \cnomn_{1}   \rhd_\Box  \kappa(\nomi_{1})  )  (\forall   \cnomn_{2}   \rhd_\Box  \cnomn_{1} )  ( \lambda(\cnomn_2)  \leq \Diamondblack(\Diamond  \nomh_{1}  \wedge \Diamondblack  \nomh_{2}))$ \\
iff & $(\exists   \nomi_{1}   \rhd_\Diamond  \lambda(\cnomo_1)  )  (\forall   \cnomn_{1}   \rhd_\Box  \kappa(\nomi_{1})  )  ( \lambda(\cnomn_1)  \leq \Box\Diamondblack(\Diamond  \nomh_{1}  \wedge \Diamondblack  \nomh_{2}))$ \\
iff & $(\exists   \nomi_{1}   \rhd_\Diamond  \lambda(\cnomo_1)  ) ( \nomi_1  \leq \Box\Box\Diamondblack(\Diamond  \nomh_{1}  \wedge \Diamondblack  \nomh_{2}))$\\
iff & $ \lambda(\cnomo_1)  \leq \Diamond\Box\Box\Diamondblack(\Diamond  \nomh_{1}  \wedge \Diamondblack  \nomh_{2})$.
\end{tabular}
\end{center}

\end{example}

\begin{example}
\label{eg:secondgoranko2}
The consequent of the formula in Example \ref{eg:secondgoranko} is
\[
\lambda(\cnomo_1) \leq \nomj_2 \metaor (\exists \nomi_2 \rhd \lambda(\cnomn_1))(\forall \cnomn_2 \rhd \kappa(\nomi_2))(\lambda(\cnomn_2) \leq \Diamond\nomj_1).
\]
The first Kracht-DLE disjunct is already flat, in the second Kracht-DLE disjunct the algorithm yields
\[
\begin{array}{rl}
& (\exists \nomi_2 \rhd_\Diamond \lambda(\cnomn_1))(\forall \cnomn_2 \rhd_\Box \kappa( \nomi_2))(\lambda(\cnomn_2) \leq \Diamond\nomj_1) \\
\mbox{iff} & (\exists \nomi_2 \rhd_\Diamond \lambda(\cnomn_1))(\nomi_2 \leq \Box\Diamond\nomj_1)\\
\mbox{iff} & \lambda(\cnomn_1) \leq \Diamond\Box\Diamond\nomj_1\, .
\end{array}
\]
\end{example}

\begin{example}
\label{eg:full_lambek2}
Let us apply the compaction of the consequent to the formula in Example \ref{eg:full_lambek}, namely
\[
\begin{array}{l}
\forall \nomj \forall \cnomm
\forall \nomh_1 \forall \nomh_2
(\forall \cnomn_1,\nomi_1 \rhd_\slash \cnomm)
(\forall \nomi_2,\cnomn_2 \rhd_\backslash \cnomn_1)[
\nomi_1 \leq \nomh_1 \metaand
\nomj \leq \nomh_2 \metaand
\nomj \nleq \cnomm
\Rightarrow \hspace{1.7cm} \\
\hfill
(\exists \nomi_3,\nomi_4 \rhd_\circ \lambda(\cnomn_2))
(
\nomi_3 \leq \nomi_2
\metaand
(\forall \cnomn_5,\nomi_5 \rhd_\slash \kappa(\nomi_4))
(
\lambda(\cnomn_5) \leq \nomi_2 \circ \nomh_2
\metaand
\nomi_5 \leq \nomh_2
)
)
].
\end{array}
\]
There is only one Kracht disjunct in the consequent, the algorithm yields
\[
\begin{array}{l}
(\exists \nomi_3,\nomi_4 \rhd_\circ \lambda(\cnomn_2))
(
\nomi_3 \leq \nomi_2
\metaand
(\forall \cnomn_5,\nomi_5 \rhd_\slash \kappa(\nomi_4))
(
\lambda(\cnomn_5) \leq \nomi_2 \circ \nomh_2
\metaand
\nomi_5 \leq \nomh_2
)
)\\
(\exists \nomi_3,\nomi_4 \rhd_\circ \lambda(\cnomn_2))
(
\nomi_3 \leq \nomi_2
\metaand
\nomi_4 \leq (\nomi_2 \circ \nomh_2) \slash \nomh_2
) \\
\lambda(\cnomn_2)
\leq
\nomi_2 \circ ((\nomi_2 \circ \nomh_2) \slash \nomh_2)
).
\end{array}
\]
\end{example}

\subsubsection{Compaction of the antecedent}
Given Kracht formula
\[
\forall\nomj\forall\cnomm\forall\overline\nomh\forall\overline\cnomo\forall^R\overline\nomi,\overline\cnomn(\eta(\nomj,\cnomm) \Rightarrow \theta_1(\purew_1) \metaor \cdots \metaor \theta_n(\purew_n) ),
\]
after applying Algorithm \ref{algo:elimination_of_non-inherently_universals} to $\theta_1,\ldots,\theta_n$, it becomes
\begin{equation}
\label{eqn:after_elimination_for_gammadeltamv}
\forall\nomj\forall\cnomm\forall\overline\nomh\forall\overline\cnomo\forall^R\overline\nomi,\overline\cnomn\left(\eta(\nomj,\cnomm) \Rightarrow {\bigmetaor_i {\lambda(\cnoml_i) \leq \delta_i} \metaor \bigmetaor_j {\gamma_j \leq \kappa(\nomk_j)} } \right),
\end{equation}
where each $\nomk$ (resp.\ $\cnoml$) is either an alias variable, or is bound by some \emph{type 1} quantifier\footnote{We know that variables in $\overline\nomk$ and $\overline\cnoml$ occur as described in \eqref{eqn:after_elimination_for_gammadeltamv} because the main variable of each disjunct in a Kracht-DLE formula is of the form $\dneg\pureu$ with $\pureu \in \mathrm{NOM}\cup\mathrm{CONOM}$.}. Furthermore, as the input formula is refined, each of alias and each \emph{type 1} variable occurs at least once in the consequent by Lemma \ref{lemma:kracht_aliases_in_succedent}. Let us abbreviate 
\[
\mathrm{SUCC} \coloneqq {\bigmetaor_i {\lambda(\cnoml_i) \leq \delta_i} \metaor \bigmetaor_j {\gamma_j \leq \kappa(\nomk_j)}}.
\]

Each restricted quantifier binds some nominals and conominals $\overline\pureu$, using some $f$ or $g$ operator; in each case, the quantifier comes equipped with a restricting inequality in the antecedent, which has shape $\nomk \leq f(\overline\pureu)$ in the first case, $g(\overline\pureu) \leq \cnoml $ in the second case, for some nominal $\nomk$ and conominal $\cnoml$. By currying, for any formula $\sigma$,
\[
\begin{array}{rl}
& (\forall \nomk \rhd_f \overline\pureu)(\sigma \Rightarrow \mathrm{SUCC})\\
\mbox{i.e.} & \forall \nomk( \nomk \leq f(\overline\pureu) \Rightarrow (\sigma \Rightarrow \mathrm{SUCC})) \\
\mbox{iff} & \forall \nomk((\nomk \leq f(\overline\pureu) \metaand \sigma) \Rightarrow \mathrm{SUCC}),
\end{array}
\]
and similarly for quantifiers using $g$ operators. Let us apply this procedure exhaustively, so to rewrite the antecedent of \eqref{eqn:after_elimination_for_gammadeltamv} by conjoining it with all the restricting inequalities of \emph{type 2} quantifiers and of \emph{type 1} quantifiers restricted by variables bound by \emph{type 2} quantifiers. The next lemma shows that all \emph{type 2} quantifiers can be eliminated by proceeding from the rightmost to the leftmost via Ackermann rule. 

\begin{lemma}
After exhaustively currying, the antecedent of \eqref{eqn:after_elimination_for_gammadeltamv} is in Ackermann shape for the elimination of the rightmost restricted quantifier, and, after the elimination, it becomes in Ackermann shape for the elimination of the successive quantifiers.
\end{lemma}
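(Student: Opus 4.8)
The plan is to invert, right-to-left, the flattening and adjunction steps of the forward ALBA run, reading each \emph{type 2} quantifier as a (possibly repeated) application of the existential Ackermann lemma with nominals and conominals. First I would record the shape of the antecedent produced by the currying above: apart from the pivotal inequality $\nomj \nleq \cnomm$, which contains no restricted variable, it is a meta-conjunction of the non-pivotal inequalities of the Kracht antecedent $\eta$ (each of the form $\nomi \leq \nomh$ or $\cnomo \leq \cnomn$, cf.\ Definition \ref{def:dleantecedent}) together with the restricting inequalities $\purev \leq f(\overline{\pureu})$ and $g(\overline{\pureu}) \leq \purev$ (cf.\ Definition \ref{def:restricting_inequality}) contributed by the curried quantifiers, namely the \emph{type 2} quantifiers and the \emph{type 1} quantifiers whose restrictor is bound by a \emph{type 2} quantifier.

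Next I would scan the (now plain) universal quantifiers over the curried variables from right to left. A \emph{type 1} variable occurs in the succedent $\mathrm{SUCC}$ and so cannot be Ackermann-eliminated; hence the first quantifier to be eliminated is the rightmost \emph{type 2} one, say $(\forall \overline{\pureu} \rhd_h \purev)$, and I would show that its bound variables $\overline{\pureu}$ occur in the antecedent exactly as required to apply Corollary \ref{lemma:flattening_skeleton_formulas} (equivalently, repeated instances of Lemma \ref{lemma:Existential_Ackermann_with_nominals_and_conominals}). By the clause defining \emph{type 2} quantifiers in Definition \ref{def:dlekracht}, each $\pureu_i$ occurs inside the operator $h$ of its own restricting inequality --- where, by the order-type of $h$, it occurs with the monotonicity needed for the substitution --- and \emph{exactly once} elsewhere: either as the restrictor of a quantifier lying strictly to its right (necessarily \emph{type 1}, hence already curried), or inside a single non-pivotal $\eta$-inequality. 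In each of these latter occurrences $\pureu_i$ stands isolated on one side of a flat inequality (as a displayed restrictor $\pureu_i \leq f'(\overline{\purew})$ or $g'(\overline{\purew}) \leq \pureu_i$, or as $\pureu_i \leq \nomh$ or $\cnomo \leq \pureu_i$), so it furnishes a clean one-sided bound playing the role of the minimal valuation. Matching the nominal/conominal and $\mathcal{F}^*/\mathcal{G}^*$ cases against the rows of Table \ref{table:ackermann_restr_quant} then certifies that the antecedent is in Ackermann shape for $\overline{\pureu}$.

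Finally I would perform the elimination: Corollary \ref{lemma:flattening_skeleton_formulas} substitutes each $\pureu_i$ by its bound and composes its two occurrences, replacing the restricting inequality $\purev \leq h(\overline{\pureu})$ by the corresponding inequality with the bounds plugged into $h$ and deleting $\overline{\pureu}$. The outcome is again a meta-conjunction of $\eta$-inequalities and restricting inequalities, with one fewer \emph{type 2} quantifier, so the claim follows by induction on the number of \emph{type 2} quantifiers. I expect the main obstacle to be the verification that the right-to-left order keeps every such step legitimate: one must check, across all polarity, order-type and $\mathcal{F}^*/\mathcal{G}^*$ cases, that the bound substituted for $\overline{\pureu}$ mentions only variables bound strictly to the right of $(\forall \overline{\pureu} \rhd_h \purev)$ --- these being either \emph{type 1} variables, which survive the whole procedure, or already-eliminated \emph{type 2} variables, which have by then been substituted away --- so that the side condition that $\overline{\pureu}$ not occur free in its own bound is never violated and no eliminated variable is silently re-introduced.
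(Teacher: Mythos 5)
Your overall route is the paper's: read each curried \emph{type 2} quantifier as an instance of Corollary \ref{lemma:flattening_skeleton_formulas} (cf.\ Table \ref{table:ackermann_restr_quant}), eliminate from the rightmost to the leftmost, check polarities, and induct on the number of \emph{type 2} quantifiers. But there is a genuine gap in the middle step: your assertion that each \emph{type 2} variable occurs ``exactly once elsewhere'' (besides its own restricting inequality) is false, and the Ackermann shape does not literally hold under it. Definition \ref{def:dlekracht} imposes ``exactly once'' only on the restrictor role; a \emph{type 2} variable may occur in several non-pivotal inequalities of the antecedent, and possibly also as a restrictor on top of that. This is not a corner case: in Example \ref{eg:secondgoranko3} the \emph{type 2} nominal $\nomi_1$ occurs in both $\nomi_1 \leq \nomh_1$ and $\nomi_1 \leq \nomh_2$. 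With two separate bounds, Corollary \ref{lemma:flattening_skeleton_formulas} is simply not applicable: each variable to be eliminated must be furnished with a \emph{single} bounding inequality. The paper's proof closes exactly this hole with a polarity argument followed by inverse splitting: each $\nomi$ (resp.\ $\cnomn$) occurs negatively (resp.\ positively) \emph{only} in the restricting inequality $I_1$ of the quantifier binding it, and in the opposite polarity in every other occurrence; hence all those other occurrences sit on the same side and can be merged into one inequality (in the example, $\nomi_1 \leq \nomh_1 \wedge \nomh_2$), after which the elimination goes through.

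The same omission resurfaces in your inductive step. After eliminating the rightmost \emph{type 2} quantifier, the resulting inequality displays the restrictor $\pureu$ of $I_1$ with a compound formula on the other side, so it is no longer a restricting inequality in the sense of Definition \ref{def:restricting_inequality}; your claim that ``the outcome is again a meta-conjunction of $\eta$-inequalities and restricting inequalities'' is therefore not accurate as stated. The paper instead tracks that $\pureu$ occurs exactly once and on the correct side in this new inequality, notes that $\pureu$ is either pivotal or itself \emph{type 2}, and, in the latter case, performs the inverse-splitting merge \emph{again} at the stage when $\pureu$ is eliminated, combining the new inequality with the other inequalities displaying $\pureu$. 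Your closing worry about freshness (bounds mentioning only variables bound strictly to the right) is legitimate but secondary, and your right-to-left discipline handles it; the missing idea is the merge via inverse splitting, without which the Ackermann-shape claim fails on inputs as simple as Example \ref{eg:secondgoranko}.
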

\begin{proof}
After currying, the variables nominals $\nomi$ and conominals $\cnomn$ bound by \emph{type 2} quantifiers can either occur in the antecedent in inequalities $\nomi \leq \nomh$ (resp.\ $\cnomo \leq \cnomn$) for some alias variable $\nomh$ (resp.\ $\cnomo$), or in restricting inequalities. Notice that each $\nomi$ (resp.\ $\cnomn$) occurs negatively (resp.\ positively) only in the restricting inequality of the quantifier that binds it (let us call it $I_1$), and occurs in the opposite polarity in any other restricting inequality where it is a restrictor. Therefore, we can merge via inverse splitting all the inequalities involving aliases and the restricting inequalities where $\nomi$ (resp.\ $\cnomn$) occur as restrictor, thus obtaining an inequality $I_2$.
When eliminating the rightmost quantifier introducing variables $\overline\nomk$ and $\overline\cnoml$, we only have to consider the restricting inequality of the quantifier $I_1$, and the inequalities (one for each variable) where nominals in $\overline\nomk$ (resp.\ conominals in $\overline\cnoml$) occur in positively (resp.\ negatively), thus, as they cannot occur in the consequent, $\overline\nomk$ and $\overline\cnoml$ have the correct polarity to apply an Ackermann elimination on the quantifier by Corollary \ref{lemma:flattening_skeleton_formulas} (cf. Table \ref{table:ackermann_restr_quant}). The variable $\pureu$ on display in the resulting inequality is the restrictor of $I_1$, and, if it is a nominal (resp.\ conominal) it occurs on the left (resp.\ right) hand side of the inequality; furthermore it occurs only once in the inequality. The variable $\pureu$ can either be a pivotal variable or a variable bound by another \emph{type 2} quantifier. In the latter case, $\pureu$ will be eliminated in a later stage by repeating the same procedure. At that stage, this inequality will be merged via inverse splitting with the ones where $\pureu$ is on display on the left (resp.\ right) hand side if it is a nominal (resp.\ conominal).
\end{proof}

After eliminating all the variables bound by \emph{type 2} quantifiers, the shape of the antecedent reduces to the inequality $\nomj \nleq \cnomm$ in conjunction with inequalities of the form $\nomj \leq \theta$ or $\eta \leq \cnomm$, and, moreover, the remaining \emph{type 1} quantifiers can only be restricted by $\nomj$ and $\cnomm$. Hence, by expanding these remaining quantifiers, exhaustively currying, and applying inverse splitting, the antecedent equivalently reduces to the following conjunction of inequalities·:
\begin{equation}
\nomj \leq \theta_1 \wedge \cdots \wedge \theta_n \ \metaand \ \eta_1 \vee \cdots \vee \eta_m \leq \cnomm \metaand  \ \nomj \nleq \cnomm.
\end{equation}

\begin{lemma}
\label{lemma:properties_of_the_skeleton}
After the elimination of \emph{type 2} restricted quantifiers and the expansion of \emph{type 1} quantifiers, the antecedent has form
\begin{equation}
\nomj \leq \bigwedge_{i=1}^n\theta_{i} \ \metaand \ \bigvee_{i=1}^m\eta_i \leq \cnomm \metaand  \ \nomj \nleq \cnomm,
\end{equation}
where $+\bigwedge_{i=1}^n\theta_{i}$ and $-\bigwedge_{i=1}^m\eta_i $ are pure scattered Skeleton formulas where $\nomj$ and $\cnomm$ do not occur, and any nominal (resp.\ conominal) occurs in positive (resp.\ negative) polarity.
\end{lemma}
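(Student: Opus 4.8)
The plan is to track the shape of the antecedent through the two operations that remain after the preceding lemma—expansion of the surviving \emph{type 1} quantifiers followed by exhaustive inverse splitting—and then to verify the four asserted properties (purity, scatteredness, Skeleton shape, and the polarity constraint) one at a time. By the preceding lemma I may assume that every \emph{type 2} quantifier has already been eliminated, so that the antecedent consists of the pivotal inequality $\nomj \nleq \cnomm$ together with inequalities of the form $\nomj \leq \theta$ and $\eta \leq \cnomm$ coming from the restricting inequalities of the remaining \emph{type 1} quantifiers, which by that lemma are now all restricted by $\nomj$ or $\cnomm$ (the alias inequalities having been merged in during the \emph{type 2} stage). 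Currying each such quantifier moves its restricting inequality into the antecedent, and inverse splitting then collects every inequality with $\nomj$ on the left into a single $\nomj \leq \bigwedge_i \theta_i$ and every inequality with $\cnomm$ on the right into a single $\bigvee_i \eta_i \leq \cnomm$; this yields the displayed form.

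For purity there is nothing to prove, since the whole $\langmeta$-formula is built over $\nomset \cup \cnomset$, so no proposition letter can occur in any $\theta_i$ or $\eta_i$. For the absence of $\nomj$ and $\cnomm$, I would observe that each surviving \emph{type 1} quantifier introduces fresh (co)nominals, so after currying $\nomj$ (resp.\ $\cnomm$) appears only as the displayed restrictor on the left (resp.\ right) of its inequality, hence outside $\theta_i$ (resp.\ $\eta_i$); moreover, being pivotal, $\nomj$ and $\cnomm$ are never substituted into any restricting formula during the \emph{type 2} elimination, as they occur only in $\nomj \nleq \cnomm$ and as restrictors. Thus neither occurs in $\bigwedge_i \theta_i$ or $\bigvee_i \eta_i$.

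The heart of the argument is that this compaction inverts the forward flattening of Lemma \ref{lemma:inductivetokracht_antecedent}, which produced the restricted quantifiers by repeatedly applying Corollary \ref{lemma:flattening_skeleton_formulas} to the pure scattered Skeleton inequalities $\nomj' \leq \varphi'$ and $\psi' \leq \cnomm'$. I would argue that each inverse application of the flattening corollary reassembles exactly one Skeleton node—an SLR node $f \in \mathcal{F}^*$ or $g \in \mathcal{G}^*$, or a $\Delta$-adjoint $\wedge$ or $\vee$—in the polarity dictated by its coordinate, so that by Table \ref{Join:and:Meet:Friendly:Table} the formulas $+\bigwedge_i \theta_i$ and $-\bigvee_i \eta_i$ consist entirely of Skeleton nodes. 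Because every reassembling step consumes fresh (co)nominals supplied by flat restricting inequalities, no variable is ever duplicated, which gives scatteredness; and because the flattening entries of Table \ref{table:ackermann_restr_quant} place the newly exposed nominals in positive and conominals in negative position, every nominal of $+\bigwedge_i \theta_i$ and $-\bigvee_i \eta_i$ ends up occurring positively and every conominal negatively.

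The step I expect to be the main obstacle is the combined Skeleton, scatteredness, and polarity bookkeeping across the interleaving of \emph{type 2} elimination and \emph{type 1} expansion. I must make sure that the residual operators admitted in the restricted quantifiers always land in genuine Skeleton positions, so that a $+g$ or $-f$ node never surfaces on one of these branches; that the aliases—which by the refined assumption (Lemma \ref{lemma:kracht_aliases_in_succedent}) occur in the succedent and are folded in via inverse splitting—do not reintroduce any sharing of variables; and that the substitutions carried out during \emph{type 2} elimination never smuggle $\nomj$ or $\cnomm$ back into the reassembled formulas.
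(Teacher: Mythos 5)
Your proposal is correct and follows essentially the same route as the paper's proof: the type~1 conjuncts are restricting inequalities and hence trivially pure scattered Skeleton with the right polarities, and the substance is an induction over the type~2 eliminations in which each Ackermann step reassembles exactly one SLR node $+f$ or $-g$ (plus $+\wedge$/$-\vee$ from inverse splitting), whose coordinates receive the inductively given pure scattered Skeleton formulas with nominals positive and conominals negative. One small mis-statement in your treatment of the pivotal variables: it is not true that $\nomj$ and $\cnomm$ "occur only in $\nomj \nleq \cnomm$ and as restrictors" --- they may also occur in alias inequalities of the antecedent, such as $\nomj \leq \nomh_1$ in Example \ref{eg:goranko}; however, in all such inequalities the pivotal variable is on display and only the alias is ever substituted inward, so your conclusion that $\nomj$ and $\cnomm$ never enter the $\theta_i$ and $\eta_i$ stands (the paper argues more directly: the variables occurring inside the compacted formulas are exactly the aliases and the restricted variables, and the pivotals are neither). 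Finally, note that your framing of the compaction as "inverting" Lemma \ref{lemma:inductivetokracht_antecedent} is purely motivational --- the lemma concerns arbitrary refined Kracht-DLE formulas, not only those arising from the forward translation --- but since your actual inductive argument only uses Corollary \ref{lemma:flattening_skeleton_formulas} in reverse together with freshness and the single-occurrence condition on aliases (Definition \ref{def:dlekracht}(1)), it does not depend on that framing.
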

\begin{proof}
It is sufficient to show that every $+\theta_i$ and $-\eta_i$ is made of Skeleton nodes and that each variable occurs only once, since $\nomj$ and $\cnomm$ cannot clearly occur there as they are not in $\overline\nomh$ or $\overline\cnomo$ and they are not even restricted variables. The conjuncts that come from the \emph{type 1} restricted quantifiers clearly satisfy the statement, hence it remains to show that the algorithm for the elimination of \emph{type 2} quantifiers produces conjuncts with the same property. We proceed by induction on the number of iterations. Let us consider the case in which we eliminate a quantifier of the kind $(\forall \overline\nomi,\overline\cnomn \rhd_f \nomk)$ for some $f \in \mathcal{F}$, as the case where the restricting operator is in $\mathcal{G}$ is treated similarly. Before the corresponding Ackermann elimination is performed, in the antecedent there is an inequality $\nomk \leq f(\overline\nomi,\overline\cnomn)$ and inequalities $\overline{\nomi \leq \varphi}$ and $\overline{\psi \leq \cnomn}$, where each formula in $\overline\varphi$ (resp.\ $\overline\psi$) is a pure scattered positive (resp.\ negative) skeleton formula by inductive hypothesis, or by the definition of Kracht-DLE formula in the base case, as there would be just restricting inequalities in the antecedent and each \emph{alias} variable occurs only once in the antecedent. After the application of the Ackermann lemma, we obtain the restricting inequality
\[
\nomh \leq f(\overline\varphi,\overline\psi).
\]
Clearly in the signed generation $+f(\overline\varphi,\overline\psi)$ (we propagate $+$ as the inequality is in the antecedent), every nominal (resp.\ conominal) occurs in positive (resp.\ negative) position.
\end{proof}

\begin{remark}
\label{remark:variables_in_the_skeleton}
The variables occurring $+\bigwedge_{i=1}^n\theta_{i}$ and $-\bigwedge_{i=1}^m\eta_i $ are exactly all the ones in $\overline\nomh$, $\overline\cnomo$ and the ones bound by \emph{type 1} quantifiers. The former variables are indeed captured because each one of them occurs at least (exactly) once in the antecedent, whilst the latter variables are clearly captured by writing the expansion of the quantifier.
\end{remark}

\begin{example}
\label{eg:goranko3}
When treating the antecedent of the first formula in Example \ref{eg:goranko2}, we do not have inherently universal restricted quantifiers; hence this step consists in a straightforward application of the inverse splitting rule
\[
\begin{array}{rl}
&\forall  \nomj \forall  \cnomm \forall  \nomh_{1}  \forall  \nomh_{2}  \forall \cnomo_1  [ \nomj  \leq  \nomh_{1}  \ \&\  \nomj  \leq  \nomh_{2} \metaand \cnomo_1 \leq \cnomm  \ \&\  \nomj \nleq \cnomm \Rightarrow \lambda(\cnomo_1)  \leq \Box\Box\Diamondblack(\Diamond  \nomh_{1}  \wedge \Diamondblack  \nomh_{2})] \\
&\forall  \nomj \forall  \cnomm \forall  \nomh_{1}  \forall  \nomh_{2}  \forall \cnomo_1  [ \nomj  \leq  \nomh_{1} \wedge \nomh_{2} \metaand \cnomo_1 \leq \cnomm  \ \&\  \nomj \nleq \cnomm \Rightarrow \lambda(\cnomo_1)  \leq \Box\Box\Diamondblack(\Diamond  \nomh_{1}  \wedge \Diamondblack  \nomh_{2})] \\
\end{array}
\]
\end{example}

\begin{example}
\label{eg:secondgoranko3}
As for the formula in Example \ref{eg:secondgoranko2} , namely
\[
\begin{array}{rl}
& \forall \nomj \forall \cnomm \forall \nomh_1\forall \nomh_2  \forall \cnomo_1(\forall \nomi_1 \rhd_\Diamond \nomj)(\forall \cnomn_1 \rhd_\Box \cnomm) [ \nomi_1 \leq \nomh_1 \metaand \nomi_1 \leq \nomh_2 \metaand \cnomo_1 \leq \cnomm \metaand \nomj \nleq \cnomm \Rightarrow \ \ \ \\ 
& \hfill \lambda(\cnomo_1) \leq \nomh_2 \metaor  \lambda(\cnomn)_1 \leq \Diamond\Box\Diamond\nomh_1], \\
\end{array}
\]
the quantifier $(\forall \cnomn_1 \rhd \cnomm)$ is  of  \emph{type 1}, while $(\forall \nomi_1 \rhd \nomj)$ is of \emph{type 2}. We start by eliminating the latter and then we merge the inequalities of the antecedent with the one of the restricted quantifier of \emph{type 1}.
\[
\begin{array}{rl}
& \forall \nomj \forall \cnomm \forall \nomh_1\forall \nomh_2  \forall \cnomo_1\forall \nomi_1 \forall \cnomn_1 [ \nomi_1 \leq \nomh_1 \metaand \nomi_1 \leq \nomh_2 \metaand \nomj \leq \Diamond \nomi_1 \metaand \cnomo_1 \leq \cnomm \metaand \Box\cnomn_1\leq\cnomm \metaand \nomj \nleq \cnomm \Rightarrow \ \ \ \\ 
& \hfill \lambda(\cnomo_1) \leq \nomh_2 \metaor  \lambda(\cnomn_1) \leq \Diamond\Box\Diamond\nomh_1] \\
\mbox{iff} & \forall \nomj \forall \cnomm \forall \nomh_1\forall \nomh_2  \forall \cnomo_1\forall \nomi_1 \forall \cnomn_1 [ \nomi_1 \leq \nomh_1 \wedge \nomh_2 \metaand \nomj \leq \Diamond \nomi_1 \metaand \cnomo_1 \leq \cnomm \metaand \Box\cnomn_1\leq\cnomm \metaand \nomj \nleq \cnomm \Rightarrow \\ 
& \hfill \lambda(\cnomo_1) \leq \nomh_2 \metaor  \lambda(\cnomn_1) \leq \Diamond\Box\Diamond\nomh_1] \\
\mbox{iff} & \forall \nomj \forall \cnomm \forall \nomh_1\forall \nomh_2  \forall \cnomo_1\forall \cnomn_1 [ \nomj \leq \Diamond(\nomh_1 \wedge \nomh_2) \metaand \cnomo_1 \leq \cnomm \metaand \Box\cnomn_1\leq\cnomm \metaand \nomj \nleq \cnomm \Rightarrow \\ 
& \hfill \lambda(\cnomo_1) \leq \nomh_2 \metaor  \lambda(\cnomn_1) \leq \Diamond\Box\Diamond\nomh_1] \\
\mbox{iff} & \forall \nomj \forall \cnomm \forall \nomh_1\forall \nomh_2  \forall \cnomo_1\forall \cnomn_1 [ \nomj \leq \Diamond(\nomh_1 \wedge \nomh_2) \metaand \cnomo_1 \vee \Box\cnomn_1\leq\cnomm \metaand \nomj \nleq \cnomm \Rightarrow \\ 
& \hfill \lambda(\cnomo_1) \leq \nomh_2 \metaor  \lambda(\cnomn_1) \leq \Diamond\Box\Diamond\nomh_1]
\end{array}
\]
\end{example}

\begin{example}
\label{eg:full_lambek3}
The formula obtained in Example \ref{eg:full_lambek2} after the compaction of the consequent is the following one
\[
\begin{array}{l}
\forall \nomj \forall \cnomm
\forall \nomh_1 \forall \nomh_2
(\forall \cnomn_1,\nomi_1 \rhd_\slash \cnomm)
(\forall \nomi_2,\cnomn_2 \rhd_\backslash \cnomn_1)[
\nomi_1 \leq \nomh_1 \metaand
\nomj \leq \nomh_2 \metaand
\nomj \nleq \cnomm
\Rightarrow \hspace{1.7cm} \\
\hfill
\lambda(\cnomn_2)
\leq
\nomi_2 \circ ((\nomi_2 \circ \nomh_2) \slash \nomh_2)
)
].
\end{array}
\]
The quantifier $(\forall \cnomn_1,\nomi_1 \rhd_\slash \cnomm)$ is of \emph{type 2}, while $(\forall \nomi_2,\cnomn_2 \rhd_\backslash \cnomn_1)$ is of \emph{type 1}. Since the latter depends on the former, we start by expanding the restricted quantifier notation for both obtaining
\[
\begin{array}{l}
\forall \nomj \forall \cnomm
\forall \nomh_1 \forall \nomh_2
\forall \cnomn_1 \forall \nomi_1
\forall \nomi_2 \forall \cnomn_2[
\nomi_1 \leq \nomh_1 \metaand
\nomj \leq \nomh_2 \metaand
\cnomn_1 \slash \nomi_1 \leq \cnomm \metaand
\nomi_2 \backslash \cnomn_2 \leq \cnomn_1 \metaand
\nomj \nleq \cnomm
\Rightarrow \\
\hfill
\lambda(\cnomn_2)
\leq
\nomi_2 \circ ((\nomi_2 \circ \nomh_2) \slash \nomh_2)
)
].
\end{array}
\]
After the elimination of $\cnomn_1$ and $\nomi_1$, the formula becomes
\[
\forall \nomj \forall \cnomm
\forall \nomh_1 \forall \nomh_2
\forall \nomi_2 \forall \cnomn_2[
\nomj \leq \nomh_2 \metaand
(\nomi_2 \backslash \cnomn_2) \slash \nomh_1 \leq \cnomm \metaand
\nomj \nleq \cnomm
\Rightarrow 
\lambda(\cnomn_2)
\leq
\nomi_2 \circ ((\nomi_2 \circ \nomh_2) \slash \nomh_2)
)
].
\]
\end{example}

\subsubsection{Elimination of pivotal variables}

After the elimination of \emph{type 2} quantifiers, the contrapositive of the obtained formula is
\begin{equation}
\label{eqn:after_type_2_quant_elimination}
\forall\nomj\forall\cnomm\forall\overline\nomh\forall\overline\cnomo\forall
\overline\nomi' \forall \overline\cnomn'\left(
\overline{\nomk \leq \gamma} \metaand 
\overline{\delta \leq \cnoml} \Rightarrow
\left(\nomj \leq \bigwedge_{i=1}^n\theta_{i} \ \metaand \ \bigvee_{i=1}^m\eta_i \leq \cnomm \Rightarrow  \ \nomj \leq \cnomm \right) \right),
\end{equation}
where $\overline\nomi'$ and $\overline\cnomn'$ are the variables originally introduced by \emph{type 1} restricted quantifiers, and the variables in $\overline\nomk$ (resp.\ $\overline\cnoml$) are in $\mathrm{NOM}$ (resp.\ $\mathrm{CONOM}$)\footnote{It is important to note that they are not in $\nomset$ and $\cnomset$, so no $\kappa$ nor $\lambda$ is hidden in the notation there.} among the ones in $\overline\nomi'$ and $\overline\nomh$ (resp.\ $\overline\cnomn'$ and $\overline\cnomo$). By applying universal Ackermann elimination on $\nomj$ and $\cnomm$, and by putting $\varphi \coloneqq \bigwedge_{i=1}^n\theta_{i} $ and $\psi \coloneqq \bigwedge_{i=1}^m\eta_i$, the formula above is equivalent to 
\begin{equation}
\label{eqn:target_alba_regained}
\forall\overline\nomh\forall\overline\cnomo\forall
\overline\nomi' \forall\overline\cnomn'\left(
\overline{\nomk \leq \gamma} \metaand 
\overline{\delta \leq \cnoml} \Rightarrow
 \varphi \leq \psi
\right).
\end{equation}

We can assume that each variable in $\overline{\nomk}$ (resp.\ $\overline{\cnoml}$) is different, since if it occurs in more than one inequality, these two inequalities can be merged via inverse splitting.

\subsubsection{To very simple Sahlqvist}

To simplify notation, let $\overline\nomi$ (resp.\ $\overline\cnomn$) denote all the nominals (resp.\ conominals) occurring in $\overline\gamma$ and $\overline\delta$. 
The formula \eqref{eqn:target_alba_regained} is thus equivalent up to renaming to:
\begin{equation}
\label{eqn:almost_final_after_inverse_correspondence}
\forall\overline\nomk \forall\overline\cnoml \forall\overline\nomi \forall\overline\cnomn \left(
 \overline{\nomk \leq \gamma} \metaand 
\overline{\delta \leq \cnoml} \Rightarrow
 \varphi \leq \psi
\right).
\end{equation}
By Lemma \ref{lemma:properties_of_the_skeleton}, we know that the inequality $\varphi \leq \psi$ is a scattered Skeleton inequality containing every variable quantified in the prefix. Furthermore, each nominal in $\overline\nomi$ and $\overline\nomk$ occurs in positive polarity in it, and each conominal in $\overline\cnomn$ and $\overline\cnoml$ occurs in negative polarity; hence we are in Ackermann shape w.r.t.\ the elimination of each $\nomk$ and $\cnoml$. Therefore \eqref{eqn:almost_final_after_inverse_correspondence} is equivalent to:
\begin{equation}
\label{eqn:almost_final_after_inverse_correspondence_2}
\forall\overline\nomi \forall\overline\cnomn \left(
 \varphi [ \overline\gamma/\overline\nomk, \overline\delta/\overline\cnoml] \leq \psi[ \overline\gamma/\overline\nomk, \overline\delta/\overline\cnoml]
\right).
\end{equation}
For each (co)nominal in $\nomi$ (resp.\ in $\cnomn$) we introduce a new variable $p_\nomi$ (resp.\ $q_\cnomn$). Let 
\[
\begin{array}{rcr}
\varphi' & \coloneqq & \left( \varphi
 \left[
 \overline\gamma/\overline\nomk, \overline\delta/\overline\cnoml \right ] \right)
 \left[ \overline{p_\nomi} / \overline\nomi, \overline{q_\cnomn}/\overline\cnomn
\right]
\phantom{,} \\[1mm]
\psi' & \coloneqq & \left( \psi
 \left[
 \overline\gamma/\overline\nomk, \overline\delta/\overline\cnoml \right ] \right)
 \left[ \overline{p_\nomi} / \overline\nomi, \overline{q_\cnomn}/\overline\cnomn
\right]. \\
\end{array}
\] 
By Lemma \ref{lemma:polarities_in_gammadelta},  nominals (resp.\ conominals) in each $+\gamma$ in $\overline\gamma$ and $-\delta$ in $\overline\delta$ occur in negative (resp.\ positive) polarity; hence every $+\gamma$ and $-\delta$ is an $\varepsilon^\partial$-uniform subtree in $+\varphi'$ and $-\psi'$, where $\varepsilon$ is the order type on $\overline p_\nomi$ and $\overline q_\cnomn$ such that $\varepsilon(p_\nomi) = 1$ and $\varepsilon(q_\cnomn) = \partial$. 

Hence, $\varphi'\leq \psi'$ is a scattered very simple $\varepsilon$-Sahlqvist inequality in $\langresidual$, and, moreover, ALBA reduces it to \eqref{eqn:almost_final_after_inverse_correspondence}, as shown below \iffalse Indeed, the occurrences of $\overline p_\nomi$ and $\overline q_\cnomn$ obtained by substituting $\overline\gamma$ and $\overline\delta$ in $\varphi'$ and $\psi'$, have the opposite polarity of the ones obtained by substituting $\overline\nomi$ and $\overline\cnomn$; therefore the formula is very simple Sahlqvist for the order type that makes these last occurrences critical.  We write $\star s \prec * t$ if the signed generation tree $\star s$ is a subtree of $*t$, with $\star,* \in \{+,-\}$. % and $s \prec^\partial t$ if it is a subtree with opposite sign; the above discussion is summarized in the following way:

\[
\left.
\begin{array}{l}
-\nomi, +\cnomn \\ -\nomi, +\cnomn
\end{array}
\right\}
\prec
\left\{
\begin{array}{l}
+\gamma, -\delta \\ +\gamma, -\delta
\end{array}
\right\}
\prec
\left\{
\begin{array}{l}
+\varphi' \\ -\psi'
\end{array}
\right.
\]
\fi 
\[
\hspace{-1.8mm}
\begin{array}{rl}
& \forall \overline p_\nomi \forall \overline q_\cnomn \left(\varphi' \leq \psi' \right) \\
\mbox{iff} & \forall \overline p_\nomi \forall q_\cnomn \forall\overline\nomj \forall\overline\cnomm \forall\overline\nomj' \forall\overline\cnomm' \Big( \overline\nomj \leq \overline\gamma[\overline p_\nomi /\overline\nomi, \overline q_\cnomn / \overline\cnomn] \metaand \overline\delta[\overline p_\nomi /\overline\nomi, \overline q_\cnomn / \overline\cnomn] \leq \overline\cnomm \metaand \overline\nomj' \leq \overline p_\nomi \metaand \overline p_\cnomn \leq \cnomm' \ \ \\ 
& \hfill \Rightarrow \varphi[\overline\nomj / \overline\nomk, \overline\cnomm/\overline\cnoml] \leq \psi[\overline\nomj / \overline\nomk, \overline\cnomm/\overline\cnoml]  \Big)  \\
\mbox{iff} & \forall\overline\nomj \forall\overline\cnomm \forall\overline\nomj' \forall\overline\cnomm' \Big( \overline\nomj \leq \overline\gamma[\overline \nomj' /\overline\nomi, \overline \cnomm' / \overline\cnomn] \metaand \overline\delta[\overline \nomj' /\overline\nomi, \overline \cnomm' / \overline\cnomn] \leq \overline\cnomm \Rightarrow \varphi[\overline\nomj / \overline\nomk, \overline\cnomm/\overline\cnoml] \leq \psi[\overline\nomj / \overline\nomk, \overline\cnomm/\overline\cnoml]  \Big)  \\
\end{array}
\]
From the above discussion, the main result follows.
\begin{theorem}
\label{thm:krachttoverysimplesahlqvist}
Every (refined) Kracht $\langmeta$ formula $\zeta$ can be effectively associated with a scattered very simple Sahlqvist inequality in $\langresidual$ whose first order correspondent is $\zeta$.
\end{theorem}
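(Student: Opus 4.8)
The plan is to read this theorem as the culmination of the reverse pipeline assembled across the four preceding subsubsections, and to prove it by exhibiting, for a given refined Kracht $\langmeta$-formula $\zeta$, an explicit chain of provably equivalent $\langmeta$-formulas terminating in a scattered very simple Sahlqvist $\langresidual$-inequality, together with a verification that running ALBA on that inequality reproduces $\zeta$. Every link in the chain is a rewriting licensed by an ALBA-rule (currying, inverse splitting, and the Ackermann eliminations of Corollary \ref{lemma:flattening_skeleton_formulas} and Lemma \ref{lemma:Universal_Ackermann_with_nominals_and_conominals}), so equivalence is preserved at each stage, and the correspondent is tracked along the way.

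First I would apply Algorithm \ref{algo:elimination_of_non-inherently_universals} to each Kracht disjunct $\theta_i$ in the consequent, compacting it to a flat inequality of the form $\lambda(\cnoml_i) \leq \delta_i$ or $\gamma_j \leq \kappa(\nomk_j)$, which brings $\zeta$ into the form \eqref{eqn:after_elimination_for_gammadeltamv}. Next I would compact the antecedent: curry each \emph{type 2} restricted quantifier into the antecedent and eliminate the variables it binds by Ackermann, proceeding from the rightmost to the leftmost so that the Ackermann shape is restored after each elimination; then expand the remaining \emph{type 1} quantifiers, which by refinement can only be restricted by $\nomj$ and $\cnomm$, and inverse-split. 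Lemma \ref{lemma:properties_of_the_skeleton} guarantees that the antecedent thereby reduces to $\nomj \leq \varphi \metaand \psi \leq \cnomm \metaand \nomj \nleq \cnomm$, with $+\varphi$ and $-\psi$ pure scattered Skeleton formulas in which $\nomj$ and $\cnomm$ do not occur and in which nominals (resp.\ conominals) occur positively (resp.\ negatively). Taking the contrapositive \eqref{eqn:after_type_2_quant_elimination}, I would eliminate the pivotal variables $\nomj,\cnomm$ by universal Ackermann (Lemma \ref{lemma:Universal_Ackermann_with_nominals_and_conominals}) to reach \eqref{eqn:target_alba_regained}. Finally, after merging inequalities sharing a displayed (co)nominal via inverse splitting and renaming, I would eliminate the alias variables $\overline\nomk,\overline\cnoml$ by Ackermann—they occur with exactly the polarity required by Lemma \ref{lemma:properties_of_the_skeleton}—to obtain \eqref{eqn:almost_final_after_inverse_correspondence_2}, and replace each surviving nominal $\nomi$ and conominal $\cnomn$ by a fresh propositional variable $p_\nomi$, $q_\cnomn$, producing the candidate $\varphi' \leq \psi'$.

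It then remains to verify two things, and I expect the second to be the real obstacle. First, that $\varphi' \leq \psi'$ is genuinely scattered very simple $\varepsilon$-Sahlqvist in $\langresidual$ for the order type with $\varepsilon(p_\nomi) = 1$ and $\varepsilon(q_\cnomn) = \partial$: here the polarity bookkeeping does the work, since by Lemma \ref{lemma:polarities_in_gammadelta} the (co)nominals inside each $\gamma$ and $\delta$ occur with the polarity that makes every $+\gamma$ and $-\delta$ an $\varepsilon^\partial$-uniform subtree, while the skeleton $+\varphi$, $-\psi$ furnished by Lemma \ref{lemma:properties_of_the_skeleton} contributes precisely the critical occurrences. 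Second, that the first-order correspondent of $\varphi' \leq \psi'$ is $\zeta$ itself: by soundness of ALBA it suffices to run ALBA on $\varphi' \leq \psi'$ and read off that its output coincides, up to the renaming just performed, with \eqref{eqn:almost_final_after_inverse_correspondence}, as in the displayed computation immediately preceding the statement. The delicate point is to confirm that ALBA's run retraces the construction in reverse, which hinges on the fact that for each introduced variable the two families of occurrences—the critical ones arising from substitution into the skeleton and the $\varepsilon^\partial$-uniform ones arising from the minimal-valuation material $\overline\gamma$, $\overline\delta$—carry opposite signs and that no spurious $\Omega$-dependency is created, so that ALBA's first-approximation and Ackermann steps return exactly \eqref{eqn:almost_final_after_inverse_correspondence} rather than a distinct quasi-inequality.
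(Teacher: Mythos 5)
Your proposal is correct and takes essentially the same route as the paper, whose proof of Theorem \ref{thm:krachttoverysimplesahlqvist} is precisely the pipeline you describe: compaction of the consequent via Algorithm \ref{algo:elimination_of_non-inherently_universals}, compaction of the antecedent by currying and right-to-left Ackermann elimination of \emph{type 2} quantifiers (Lemma \ref{lemma:properties_of_the_skeleton}), universal Ackermann elimination of the pivotal and then alias variables, and renaming of the surviving (co)nominals to fresh propositional variables. The verification you flag as the delicate point is handled in the paper exactly as you suggest, namely by the explicit displayed ALBA run on $\varphi' \leq \psi'$ showing its output coincides with \eqref{eqn:almost_final_after_inverse_correspondence}, with Lemmas \ref{lemma:polarities_in_gammadelta} and \ref{lemma:properties_of_the_skeleton} supplying the polarity bookkeeping that makes $\varphi' \leq \psi'$ scattered very simple $\varepsilon$-Sahlqvist.
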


\begin{example}
\label{eg:goranko4}
In Example \ref{eg:goranko3} we had
\[
\forall  \nomj \forall  \cnomm \forall  \nomh_{1}  \forall  \nomh_{2} \forall \cnomo_1   [ \nomj  \leq  \nomh_{1} \wedge \nomh_{2} \metaand \cnomo_1 \leq \cnomm \ \&\  \nomj \nleq \cnomm_1 \Rightarrow \lambda(\cnomo_1)  \leq \Box\Box\Diamondblack(\Diamond  \nomh_{1}  \wedge \Diamondblack  \nomh_{2})]
\]
After the contrapositive step it becomes
\[
\forall  \nomh_{1}  \forall  \nomh_{2}  \forall \cnomo_1  [ \Box\Box\Diamondblack(\Diamond  \nomh_{1}  \wedge \Diamondblack  \nomh_{2}) \leq \cnomo_1 \Rightarrow  \nomh_{1} \wedge \nomh_{2} \leq \cnomo_1];
\]
which, by the previous discussion, is equivalent to the very simple Sahlqvist formula
\[
\forall p_{h_1} \forall p_{h_2}  [   p_{h_1} \wedge p_{h_2} \leq \Box\Box\Diamondblack(\Diamond  p_{h_1}  \wedge \Diamondblack  p_{h_2})].
\]
\end{example}

\begin{example}
\label{eg:secondgoranko4}
Taking the formula in Example \ref{eg:secondgoranko3}, i.e. 
\[
\begin{array}{rl}
 & \forall \nomj \forall \cnomm \forall \nomh_1\forall \nomh_2  \forall \cnomo_1\forall \cnomn_1 [ \nomj \leq \Diamond(\nomh_1 \wedge \nomh_2) \metaand \cnomo_1 \vee \Box\cnomn_1\leq\cnomm \metaand \nomj \nleq \cnomm \Rightarrow \ \ \ \  \\ 
& \hfill \lambda(\cnomo_1) \leq \nomh_2 \metaor  \lambda(\cnomn_1) \leq \Diamond\Box\Diamond\nomh_1],
\end{array}
\]
after the contrapositive step we obtain
\[
 \forall \nomh_1\forall \nomh_2  \forall \cnomo_1\forall \cnomn_1[ \nomh_2 \leq \cnomo_1 \metaand \Diamond\Box\Diamond\nomh_1 \leq \cnomn_1 \Rightarrow \Diamond(\nomh_1 \wedge \nomh_2) \leq \cnomo_1 \vee \Box\cnomn_1],
\]
which in turn is equivalent to the very simple Sahlqvist
\[
\forall p_{h_1} \forall q_{o_1}[\Diamond(p_{h_1} \wedge q_{o_1}) \leq q_{o_1} \vee \Box\Diamond\Box\Diamond p_{h_1}]
\]
\end{example}

\begin{example}
\label{eg:full_lambek4}
Continuing from Example \ref{eg:full_lambek3}, after the contrapositive step, the formula
\[
\forall \nomj \forall \cnomm
\forall \nomh_1 \forall \nomh_2
\forall \nomi_2 \forall \cnomn_2[
\nomj \leq \nomh_2 \metaand
(\nomi_2 \backslash \cnomn_2) \slash \nomh_1 \leq \cnomm \metaand
\nomj \nleq \cnomm
\Rightarrow 
\lambda(\cnomn_2)
\leq
\nomi_2 \circ ((\nomi_2 \circ \nomh_2) \slash \nomh_2)
)
],
\]
becomes
\[
\forall \nomh_1 \forall \nomh_2
\forall \nomi_2 \forall \cnomn_2[
\nomi_2 \circ ((\nomi_2 \circ \nomh_2) \slash \nomh_2)
\leq
\cnomn_2
\Rightarrow 
\nomh_2
\leq
(\nomi_2 \backslash \cnomn_2) \slash \nomh_1
)
],
\]
which is equivalent to the very simple Sahlqvist
\[
\forall p_{i_2} \forall p_{h_1} \forall p_{h_2}
[
p_{h_2}
\leq
(p_{i_2} \backslash (p_{i_2} \circ ((p_{i_2} \circ p_{h_2}) \slash p_{h_2}))) \slash p_{h_1}
].
\]
\end{example}

\subsubsection{Kracht to inductive shape}
Theorem \ref{thm:krachttoverysimplesahlqvist} shows how each Kracht-DLE formula is the correspondent of some very simple Sahlqvist formula in $\langresidual$. Thanks to Proposition \ref{prop:cryptotoinductive} and Lemma \ref{lemma:inductivetocrypto}, in order to target inductive formulas in $\langbase$ it is necessary to further restrict the class of Kracht-DLE formulas in order to target crypto inductive $\langbase$-inequalities (cf.\ Definition \ref{def:Crypto:inductive}). To do so, it is sufficient to note that the only part to check is that the Kracht-DLE disjuncts that start with some with some restricted quantifier using an operator in $\langresidual$ produce (through Algorithm \ref{algo:elimination_of_non-inherently_universals}) inequalities whose formulas (not in main position) have an $(\varepsilon,\Omega)$-unpackable (see Definition \ref{def:epsomega:unpackable}) signed generation tree, for some order type $\varepsilon$ and order $\Omega$ on the variables. To do so, it is sufficient to note that the operators in such formulas are exactly the ones of the restricted quantifiers in the disjuncts in their same position before Algorithm \ref{algo:elimination_of_non-inherently_universals} is executed; hence it is sufficient to use Definition \ref{def:epsomega:unpackable} considering the operators of the restricted quantifiers.

\section{Conclusions}
\label{sec:conclusions}
\paragraph{Our contribution.} This paper presents two main contributions that naturally extend the results in \cite{inverse_correspondence_tense} to the DLE setting. Firstly, it shows a characterization of the fragment of very simple Sahlqvist formulae in the language with all the residuals that are equivalent to some inductive formula in the language without residuals. Secondly, it shows an algorithm for inverse correspondence for the logics whose algebraic semantics are given by normal distributive lattice expansions. This allows to effectively compute axiomatizations that enforce some first order condition on their models, provided that the condition can be written in the shape of Definition \ref{def:dlekracht}.

\paragraph{Future work.} One natural development of this work consists in expanding and modifying the algorithm in order to make it work also with non-distributive lattice logics. 

\bibliographystyle{abbrv}
\bibliography{ref}

\appendix
\section{ALBA for correspondence}
\label{appendix:albarules}
The present subsection reports on the rules and execution of the algorithm ALBA in the setting of $\mathcal{L}_\mathrm{DLE}$.
		
		The version of ALBA relative to $\mathcal{L}_\mathrm{DLE}$ runs as detailed in \cite{CoPa2012distr}. In a nutshell, $\mathcal{L}_\mathrm{DLE}$-inequalities are equivalently transformed into the conjunction of one or more $\mathcal{L}_\mathrm{DLE}^{*+}$ quasi-inequalities, with the aim of eliminating propositional variable occurrences via the application of Ackermann rules. We refer the reader to \cite{CoPa2012distr} for details. In what follows, we illustrate how ALBA works, while at the same time we introduce its rules. The proof of the soundness and invertibility of the general rules for the DLE-setting is similar to the one provided in \cite{CoPa2012distr,CoGhPa14}. ALBA manipulates input inequalities $\phi\leq\psi$ and proceeds in three stages:
		
		\textbf{First stage: preprocessing and first approximation.} %\marginnote{ in addition to the standard preprocessing, let us add some rules which collapse skeleton terms containing $+\bot$ or $-\top$ into $\bot$ and collapse PIA terms containing $-\bot$ or $+\top$ into $\top$. mention that this is not done in standard ALBA since there it is not strictly needed for f.o. corresp. but this step is relevant for the management of occurrences of constants on the wrong side, since it allows to transform e.g.\ a problematic premise into a tautology and make it hence disappear}
		ALBA preprocesses the input inequality $\phi\leq \psi$ by performing the following steps
		exhaustively in the signed generation trees $+\phi$ and $-\psi$:
		\begin{enumerate}
			\item
			\begin{enumerate}
				\item Push down, towards variables, occurrences of $+\land$, by distributing each of them over their children nodes labelled with $+\lor$ which are not in the scope of PIA nodes;
				\item Push down, towards variables, occurrences of $-\lor$, by distributing each of them over their children nodes labelled with $-\land$ which are not in the scope of PIA nodes;
				\item Push down, towards variables, occurrences of $+f$ for any $f\in \mathcal{F}$, by distributing each such occurrence over its $i$th child node whenever the child node is labelled with $+\lor$ (resp.\ $-\land$) and is not in the scope of PIA nodes, and whenever $\epsilon_f(i)=1$ (resp.\ $\epsilon_f(i)=\partial$);				
				\item Push down, towards variables, occurrences of $-g$ for any $g\in \mathcal{G}$, by distributing each such occurrence over its $i$th child node whenever the child node is labelled with $-\land$ (resp.\ $+\lor$) and is not in the scope of PIA nodes, and whenever $\epsilon_g(i)=1$ (resp.\ $\epsilon_g(i)=\partial$).
				
				%and $-g$ for any $g\in \mathcal{G}$ for $\epsilon_f(i)=1$, $+\land$, $-g$ for $\epsilon_g(i) = \partial$, %$+\diamdot, -\rhddot$by distributing each of them over their $i$th nodes labelled with $+\lor$ which are not in the scope of PIA nodes, or
				%\item Push down, towards variables, occurrences of $-g$ for $\epsilon_g(i)=1$, $-\lor$ and $+ f$ for $\epsilon_f(i) = \partial$, $-\boxdot, +\lhddot$ by distributing them over nodes labelled with $-\land$ which are not in the scope of PIA nodes.
			\end{enumerate}
			\item Apply the splitting rules:
			\[\infer{\alpha\leq\beta\ \ \ \alpha\leq\gamma}{\alpha\leq\beta\land\gamma}
			\qquad
			\infer{\alpha\leq\gamma\ \ \ \beta\leq\gamma}{\alpha\lor\beta\leq\gamma}
			\]
			\item Apply the monotone and antitone variable-elimination rules:
			\[
			\infer{\alpha(\perp)\leq\beta(\perp)}{\alpha(p)\leq\beta(p)}
			\qquad
			\infer{\beta(\top)\leq\alpha(\top)}{\beta(p)\leq\alpha(p)}
			\]
			for $\beta(p)$ positive in $p$ and $\alpha(p)$ negative in $p$.
		\end{enumerate}

		Let $\mathsf{Preprocess}(\phi\leq\psi)$ be the finite set $\{\phi_i\leq\psi_i\mid 1\leq i\leq n\}$ of inequalities obtained after the exhaustive application of the previous rules. We proceed separately on each of them, and hence, in what follows, we focus only on one element $\phi_i\leq\psi_i$ in $\mathsf{Preprocess}(\phi\leq\psi)$, and we drop the subscript. Next, the following {\em first approximation rule} is applied {\em only once} to every inequality in $\mathsf{Preprocess}(\phi\leq\psi)$:
		$$\infer{\nomi_0\leq\phi\ \ \ \psi\leq \cnomm_0}{\phi\leq\psi}
		$$
		Here, $\nomi_0$ and $\cnomm_0$ are a nominal and a conominal respectively. The first-approximation
		step gives rise to systems of inequalities $\{\nomi_0\leq\phi_i, \psi_i\leq \cnomm_0\}$ for each inequality in $\mathsf{Preprocess}(\phi\leq\psi)$. Each such system is called an {\em initial
			system}, and is now passed on to the reduction-elimination cycle.
		
		\textbf{Second stage: reduction-elimination cycle.} The goal of the reduction-elimination cycle is to eliminate all propositional variables from the systems
		received from the preprocessing phase. The elimination of each variable is effected by an
		application of one of the Ackermann rules given below. In order to apply an Ackermann rule, the
		system must have a specific shape. The adjunction, residuation, approximation, and splitting rules are used to transform systems into this shape. The rules of the reduction-elimination cycle, viz.\ the adjunction, residuation, approximation, splitting, and Ackermann rules, will be collectively called the {\em reduction} rules.
		
		\textbf{Residuation rules.} Here below we provide the residuation rules relative to each $f\in \mathcal{F}$ and $g\in \mathcal{G}$ of arity at least $1$: for each $1\leq h\leq n_f$ and each $1\leq k\leq n_g$:
		\begin{center}
			\begin{tabular}{cc}
				\AxiomC{$f(\psi_1,\ldots,\psi_h, \ldots, \psi_{n_f})\leq \chi$}
				\LeftLabel{($\varepsilon_f(h) = 1$)}
				\UnaryInfC{$\psi_h\leq f_h^{\sharp}(\psi_1,\ldots,\chi, \ldots, \psi_{n_f})$}
				\DisplayProof
				&
				\AxiomC{$f(\psi_1,\ldots,\psi_h, \ldots, \psi_{n_f})\leq \chi$}
				\RightLabel{($\varepsilon_f(h) = \partial$)}
				\UnaryInfC{$f_h^{\sharp}(\psi_1,\ldots,\chi, \ldots, \psi_{n_f})\leq \psi_h$}
				\DisplayProof
				\\
			\end{tabular}
		\end{center}
		
		\begin{center}
			\begin{tabular}{cc}
				\AxiomC{$\chi \leq g(\psi_1,\ldots,\psi_k, \ldots, \psi_{n_g})$}
				\LeftLabel{($\varepsilon_g(k) = \partial$)}
				\UnaryInfC{$\psi_k \leq g_k^{\flat}(\psi_1,\ldots,\chi, \ldots, \psi_{n_g})$}
				\DisplayProof
				&
				\AxiomC{$\chi \leq g(\psi_1,\ldots,\psi_k, \ldots, \psi_{n_g})$}
				\RightLabel{($\varepsilon_g(k) = 1$)}
				\UnaryInfC{$g_k^{\flat}(\psi_1,\ldots,\chi, \ldots, \psi_{n_g})\leq \psi_k$}
				\DisplayProof
				\\
			\end{tabular}
		\end{center}
		
		%\textbf{Adjunction rules.}
		%\begin{prooftree}
		%\AxiomC{${\diamdot} \phi\leq \psi $}\UnaryInfC{$\phi\leq {\boxdotb} \psi$}
		%\AxiomC{$\phi\leq \boxdot \psi $}\UnaryInfC{${\diamdotb} \phi\leq \psi$}
		%\AxiomC{${\lhddot} \phi\leq \psi $}\UnaryInfC{${\lhddotb} \psi\leq \phi$}
		%\noLine\TrinaryInfC{}
		%\AxiomC{$\phi\leq {\rhddot} \psi $}\UnaryInfC{$\psi\leq {\rhddotb}\phi$} \noLine\UnaryInfC{} \noLine\BinaryInfC{}
		%\end{prooftree}
		
		\textbf{Approximation rules.} Here below we provide the approximation rules\footnote{The version of the approximation rules given in \cite{CoPa2012distr,PaSoZh15r,CGPSZ14} is slightly different from but equivalent to that of the approximation rules reported on here. That formulation is motivated by the need of enforcing the invariance of certain topological properties for the purpose of proving the canonicity of the inequalities on which ALBA succeeds. In this context, we do not need to take these constraints into account, and hence we can take this more flexible version of the approximation rules as primitive, bearing in mind that when proving canonicity one has to take a formulation analogous to that in in \cite{CoPa2012distr,PaSoZh15r,CGPSZ14} as primitive.} relative to each $f\in \mathcal{F}$ and $g\in \mathcal{G}$ of arity at least $1$: for each $1\leq h\leq n_f$ and each $1\leq k\leq n_g$,
		
		\begin{center}
			\begin{tabular}{cc}
				\AxiomC{$\nomi\leq f(\psi_1,\ldots,\psi_h, \ldots, \psi_{n_f})$}
				\LeftLabel{$(\varepsilon_f(h) = 1)$}
				\UnaryInfC{$\nomi\leq f(\psi_1,\ldots,\nomj, \ldots, \psi_{n_f})\quad\nomj\leq \psi_h$}
				\DisplayProof
				&
				\AxiomC{$g(\psi_1,\ldots,\psi_k, \ldots, \psi_{n_g})\leq \cnomm$}
				\RightLabel{$(\varepsilon_g(k) = 1)$}
				\UnaryInfC{$g(\psi_1,\ldots,\cnomn, \ldots, \psi_{n_g})\leq \cnomm\quad\psi_k\leq\cnomn$}
				\DisplayProof
				\\
				& \\
				\AxiomC{$\nomi\leq f(\psi_1,\ldots,\psi_h, \ldots, \psi_{n_f})$}
				\LeftLabel{$(\varepsilon_f(h) = \partial)$}
				\UnaryInfC{$\nomi\leq f(\psi_1,\ldots,\cnomn, \ldots, \psi_{n_f})\quad \psi_k\leq\cnomn$}
				\DisplayProof
				&
				\AxiomC{$g(\psi_1,\ldots,\psi_k, \ldots, \psi_{n_g})\leq \cnomm$}
				\RightLabel{$(\varepsilon_g(k) = \partial)$}
				\UnaryInfC{$g(\psi_1,\ldots,\nomj, \ldots, \psi_{n_g})\leq \cnomm\quad \nomj\leq \psi_h$}
				\DisplayProof
			\end{tabular}
		\end{center}
		where the variable $\nomj$ (resp.\ $\cnomn$) is a nominal (resp.\ a conominal). %if $\epsilon_f(k)=1$ (resp.\ $\epsilon_g(k)=1$), and is a conominal (resp.\ a nominal) if $\epsilon_f(k)=\partial$ (resp.\ $\epsilon_g(k)=\partial$). Moreover, $\leq^{\epsilon_k}$ denotes $\leq$ if $\epsilon_k = 1$, and denotes $\geq$ if $\epsilon_k = \partial$. The leftmost inequalities in each rule above will be referred to as the \emph{side condition}.
		The nominals and conominals introduced by the approximation rules must be {\em fresh}, i.e.\ must not already occur in the system before applying the rule.
		
		%Each approximation rule transforms a given system $S\cup\{s\leq t\}$ into systems $S\cup\{s_1\leq t_1\}$ and $S\cup\{s_2\leq t_2, s_3\leq t_3\}$, the first of which containing only the side condition (in which no propositional variable occurs), and the second one containing the instances of the two remaining lower inequalities.
		
		\textbf{Ackermann rules.} These rules are the core of ALBA, since their application eliminates proposition variables. As mentioned earlier, all the preceding steps are aimed at equivalently rewriting the input system into one of a shape in which the Ackermann rules can be applied. An important feature of Ackermann rules is that they are executed on the whole set of inequalities in which a given variable occurs, and not on a single inequality.
		\begin{center}
			\AxiomC{$\bigmetaand \{ \alpha_i \leq p \mid 1 \leq i \leq n \} \metaand \bigmetaand \{ \beta_j(p)\leq \gamma_j(p) \mid 1 \leq j \leq m \} \; \Rightarrow \; \nomi \leq \cnomm$}
			\RightLabel{$(RAR)$}
			\UnaryInfC{$\bigmetaand \{ \beta_j(\bigvee_{i=1}^n \alpha_i)\leq \gamma_j(\bigvee_{i=1}^n \alpha_i) \mid 1 \leq j \leq m \} \; \Rightarrow \; \nomi \leq \cnomm$}
			\DisplayProof
		\end{center}
		where $p$ does not occur in $\alpha_1, \ldots, \alpha_n$, $\beta_{1}(p), \ldots, \beta_{m}(p)$ are positive in $p$, and $\gamma_{1}(p), \ldots, \gamma_{m}(p)$ are negative in $p$.
		
		\begin{center}
			\AxiomC{$\bigmetaand \{ p \leq \alpha_i \mid 1 \leq i \leq n \} \metaand \bigmetaand \{ \beta_j(p)\leq \gamma_j(p) \mid 1 \leq j \leq m \} \; \Rightarrow \; \nomi \leq \cnomm$}
			\RightLabel{$(LAR)$}
			\UnaryInfC{$\bigmetaand \{ \beta_j(\bigwedge_{i=1}^n \alpha_i)\leq \gamma_j(\bigwedge_{i=1}^n \alpha_i) \mid 1 \leq j \leq m \} \; \Rightarrow \; \nomi \leq \cnomm$}
			\DisplayProof
		\end{center}
		where $p$ does not occur in $\alpha_1, \ldots, \alpha_n$, $\beta_{1}(p), \ldots, \beta_{m}(p)$ are negative in $p$, and $\gamma_{1}(p), \ldots, \gamma_{m}(p)$ are positive in $p$.
		
		\textbf{Third stage: output.}
		If there was some system in the second stage from which not all occurring propositional variables could be eliminated through the application of the reduction rules, then ALBA reports failure and terminates. Else, each system $\{\nomi_0\leq\phi_i, \psi_i\leq \cnomm_0\}$ obtained from $\mathsf{Preprocess}(\varphi\leq \psi)$ has been reduced to a system, denoted $\mathsf{Reduce}(\varphi_i\leq \psi_i)$, containing no propositional variables. Let ALBA$(\varphi\leq \psi)$ be the set of quasi-inequalities \begin{center}{\Large{\&}}$[\mathsf{Reduce}(\varphi_i\leq \psi_i) ]\Rightarrow \nomi_0 \leq \cnomm_0$\end{center} for each $\varphi_i \leq \psi_i \in \mathsf{Preprocess}(\varphi\leq \psi)$.
		
		Notice that all members of ALBA$(\varphi\leq \psi)$ are free of propositional variables. ALBA returns
		\[
		\mathrm{ALBA}(\varphi\leq \psi)
		\]
		and terminates. An inequality $\varphi\leq \psi$ on which ALBA succeeds will be called an ALBA-{\em inequality}.

		The proof of the following theorem is a straightforward generalization of \cite[Theorem 8.1]{CoPa2012distr}, and hence its proof is omitted.
		\begin{thm}[Correctness]\label{albacorrect}
			If ALBA succeeds on a $\mathcal{L}_{\mathrm{DLE}}$-inequality $\varphi\leq\psi$, then for every perfect    $\mathcal{L}_{\mathrm{DLE}}$-algebra $\bbA$, $$\bbA\models\varphi\leq\psi\quad\mbox{iff}\quad\bbA\models\mathrm{ALBA}(\varphi\leq\psi).$$
		\end{thm}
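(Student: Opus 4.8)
The plan is to prove the statement in the standard ALBA fashion: by verifying that \emph{each} rule invoked in the three stages of the run is an equivalence over perfect $\mathcal{L}_{\mathrm{DLE}}$-algebras, i.e.\ is both \emph{sound} and \emph{invertible}, and then concluding that the finite composition of rule applications transforming $\varphi\leq\psi$ into $\mathrm{ALBA}(\varphi\leq\psi)$ preserves validity in both directions on every perfect $\bbA$. Since ALBA is assumed to succeed, the run terminates having eliminated every propositional variable, so that $\mathrm{ALBA}(\varphi\leq\psi)$ is, by construction, the conjunction over the branches $\varphi_i\leq\psi_i\in\mathsf{Preprocess}(\varphi\leq\psi)$ of the pure quasi-inequalities obtained by taking the conjunction of $\mathsf{Reduce}(\varphi_i\leq\psi_i)$ as antecedent and $\nomi_0\leq\cnomm_0$ as consequent; since $\bbA\models\varphi\leq\psi$ iff $\bbA\models\varphi_i\leq\psi_i$ for all $i$, it suffices to establish the equivalence branch by branch.

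First I would treat the preprocessing and first-approximation stage. The distribution rules of step (1) are justified by the DLE identities recorded after Definition \ref{def:DLE}: distributivity of the lattice together with the fact that each $f\in\mathcal{F}$ (resp.\ $g\in\mathcal{G}$) preserves finite joins (resp.\ meets) in its $1$-coordinates and reverses finite meets (resp.\ joins) in its $\partial$-coordinates, so each distribution replaces a term by an equal one. The splitting rules are the universal properties of $\wedge$ and $\vee$, and the monotone/antitone variable-elimination rules hold because, under the stated polarities, the extremal valuation $p:=\bot$ (resp.\ $p:=\top$) yields the strongest instance of the inequality, so validity for all $p$ reduces to validity at that point. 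The first-approximation rule is exactly the content of Corollary \ref{cor:firstapproxnomcnom}: on a perfect algebra the (co)nominals range over the complete join-generators (resp.\ meet-generators), so $\varphi\leq\psi$ holds iff the approximated quasi-inequality does.

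Next I would treat the reduction--elimination cycle. The residuation rules are sound and invertible because on a perfect DLE the interpretation of each $f$ (resp.\ $g$) is completely join-preserving (resp.\ meet-preserving) in its positive coordinates and completely meet-reversing (resp.\ join-reversing) in its negative ones, hence admits the residuals and adjoints interpreting $f^\sharp_i$ and $g^\flat_i$; thus each residuation step merely rephrases an inequality via an adjunction. The approximation rules are the special cases of Lemma \ref{lemma:Existential_Ackermann_with_nominals_and_conominals}: they rest on the complete join-primeness of nominals and meet-primeness of conominals in perfect algebras, which lets $\nomi\leq f(\ldots,\psi_h,\ldots)$ (with $f$ completely join-preserving in coordinate $h$) be witnessed by a single fresh nominal $\nomj\leq\psi_h$. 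Finally, the two Ackermann rules are instances of the right- and left-handed universal Ackermann lemmas established earlier, whose proofs already exhibit the required two-way implication using only the monotonicity hypotheses on the $\beta_j$ and $\gamma_j$; the third stage merely collects the resulting pure quasi-inequalities, introducing no further content.

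The main obstacle is not any single computation but the \emph{invertibility} (as opposed to mere soundness) of the two genuinely semantic families of rules --- approximation and Ackermann --- since it is exactly here that the passage from arbitrary to \emph{perfect} algebras is indispensable: the ``only if'' direction of the approximation rules needs complete join/meet-primeness of the (co)nominals, and the reverse direction of the Ackermann rules needs the actual existence in $\bbA$ of the minimal and maximal valuations $\bigvee_{i}\alpha_i$ and $\bigwedge_{i}\alpha_i$, both of which are guaranteed by perfectness and are precisely the properties isolated in Subsection \ref{ssec:fapproxrestrackermann}. With those lemmas in hand the verification of each rule is routine and parallels \cite[Theorem 8.1]{CoPa2012distr}, so the composite equivalence $\bbA\models\varphi\leq\psi$ iff $\bbA\models\mathrm{ALBA}(\varphi\leq\psi)$ follows.
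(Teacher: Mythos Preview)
Your proposal is correct and is exactly the standard approach; the paper in fact \emph{omits} the proof of this theorem, simply noting that it is a straightforward generalization of \cite[Theorem~8.1]{CoPa2012distr}. Your sketch fleshes out precisely that generalization, correctly invoking the lemmas of Subsection~\ref{ssec:fapproxrestrackermann} for the approximation and Ackermann steps and correctly isolating perfectness as the key hypothesis where invertibility is at stake.
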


\end{document}